\crefname{theorem}{Theorem}{Theorems}
\crefname{thm}{Theorem}{Theorems}
\crefname{conj}{Conjecture}{Theorems}
\crefname{lemma}{Lemma}{Lemmas}
\crefname{lem}{Lemma}{Lemmas}
\crefname{remark}{Remark}{Remarks}
\crefname{prop}{Proposition}{Propositions}
\crefname{defn}{Definition}{Definitions}
\crefname{corollary}{Corollary}{Corollaries}
\crefname{cor}{Corollary}{Corollaries}
\crefname{section}{Section}{Sections}
\crefname{figure}{Figure}{Figures}
\crefname{quest}{Question}{Questions}
\newtheorem{theorem}{Theorem}[section]
\newtheorem{lemma}{Lemma}[section]
\newtheorem{corollary}{Corollary}[section]
\newtheorem{proposition}{Proposition}[section]
\theoremstyle{definition}
\DeclareMathOperator{\homeo}{Homeo}
\DeclareMathOperator{\Prob}{Prob}
\DeclareMathOperator{\mesh}{mesh}
\DeclareMathOperator{\shift}{shift}
\DeclareMathOperator{\Fix}{Fix}
\DeclareMathOperator{\ord}{ord}
\DeclareMathOperator{\mdim}{mdim}
\DeclareMathOperator{\Cov}{Cov}
\DeclareMathOperator{\capacity}{cap}
\newcommand{\Z}{\mathbb{Z}}
\newcommand{\N}{\mathbb{N}}
\newcommand{\R}{\mathbb{R}}
\title[Dimension for dynamical systems]{A new notion of dimension for dynamical systems and shift embeddability}
\author{Tom Meyerovitch }
\address{Ben Gurion University of the Negev.
	Departement of Mathematics.
	Be'er Sheva, 8410501, Israel
}
\email{mtom@bgu.ac.il}
\begin{document}

\begin{abstract}
    %Gromov's notion of mean dimension for a dynamical system has been app
    A dynamical system $(X,T)$ is \emph{shift embeddable} if $(X,T)$ embeds continuously and equivariantly in the shift over $[0,1]^d$ for some finite $d$. Refuting a major conjecture in the field, in a recent result of Dranishnikov and Levin it was shown that Gromov's mean dimension and Lebesgue covering dimension of finite orbits are not the only obstructions for shift embeddability.
    We present a new notion of dimension for dynamical systems over any countable group. We show that this new notion of dimension accounts for all known obstructions for shift embeddability.
    %This new notion is applied to prove new results regarding the shift embeddability problem and Gromov's mean dimension.  
\end{abstract}

\maketitle

\section{Introduction}\label{sec:intro}

%Let us start with a restricted formulation of the \emph{shift embeddability problem} or \emph{real-valued  sampling rate problem} in topological dynamics:

Let $\Phi:X\to X$ be a homeomorphism of a compact metric space. 
The \emph{shift embeddability} or \emph{sampling-rate problem} is the following:
Under what conditions do there exists a finite number of  continuous real-valued functions $f_1,\ldots,f_d:X \to \mathbb{R}$ so that a point  $x \in X$ can be uniquely recovered by sampling the values of the $f_1,\ldots,f_d$ along the orbit of $x$?
Formally, the shift embeddability problem for $\Phi:X \to X$ asks if there exists $d \in \mathbb{N}$ and a continuous function $f:X \to \mathbb{R}^d$ such that the function $f^\mathbb{Z}:X \to (\mathbb{R}^d)^\Z$ is injective, where $f^\mathbb{Z}$ is given by $f^\mathbb{Z}(x)_n: =f(\Phi^n(x))$.
If there exists $d \in \N$ and $f:X \to \mathbb{R}^d$ as above, we say that $\Phi$ is \emph{shift embeddable}, and in this case refer to the smallest possible $d \in \N$  as the \emph{minimal sampling rate} for $\Phi$.

In the presence of finite orbits, classical dimension theory immediately confronts us with obstructions: Shift embeddablity of $\Phi$ implies that  for  every $k \in \mathbb{N}$ the  set of $k$-periodic points must be finite dimensional. In fact, embeddablity of $\Phi$ implies  
\begin{equation}\label{eq:fix_finite}
\sup_{k \in \N}\frac{1}{k}\dim(\Fix(\Phi^k))< +\infty
\end{equation}

Under the additional assumption that the dimension of the space $X$ is finite, 
it was shown by Gutman, extending  Jawarski's previous work, that there are no additional obstructions to shift embeddablity beyond \eqref{eq:fix_finite}.

For many years it was not known if there are additional obstructions for shift embeddablity of a dynamical system.
Near the end of the 20th century Lindenstrauss and Weiss \cite{MR1749670}  discovered that   \emph{mean dimension} poses such an obstruction. The mean dimension of a homeomorphism is a numerical invariant introduced by Gromov that can take arbitrary values in $[0,+\infty]$. 
We will recall a definition of mean dimension later. For now let us say that it can be thought of as a crossbreeding of Lebesgue covering dimension of a topological space and the and topological entropy of a continuous self-map.
 Lindenstrauss and Weiss showed that if $\Phi$ has minimal sampling rate $d$, then the mean dimension of $\Phi$ cannot exceed $d$, and that for any $ d\in [0,+\infty]$ there exist a minimal homeomorphism $\Phi:X \to X$ whose mean dimension is equal to $d$. In a subsequent paper \cite{MR1793417}  Lindenstrauss proved that for a class of homeomorphisms that includes all minimal homeomorphism and is closed under extensions, finite mean dimension is a necessarily and sufficient condition for finiteness of the minimal sampling rate.

The discovery of mean dimension and the embedding theorem for minimal systems due to  Lindenstrauss \cite{MR1793417}
recast the shift embeddability problem into the following form: Beyond mean dimension and dimension of  periodic points, are there any additional obstructions to shift embeddability? 

Up until recently, it was  conjectured that no additional obstructions exist, see  \cite[Conjecture 1.2]{MR3219549}.  
In a recent paper \cite{dranishnikov2025freezactionisometriescompact} Dranishnikov and Levin shattered the above conjecture by proving the existence of an \emph{isometry} on a compact metric space $X$ that %has infinite minimal sampling rate . In other words the  Dranishnikov-Levin isometry 
cannot be embedded into the shift over $[0,1]^d$ for any finite $d$ .

Could there be a different ``dimension-like'' invariant that completely captures all obstructions for shift embeddability?

With the above question in mind, in the current paper we propose a new notion of dimension for dynamical systems.

To avoid further suspense, we will now present a concise definition of this new invariant.
Let $(X,T)$ be a dynamical system, namely $X$ is a compact metric space and $T$ is a homomorphism from a (discrete countable or locally compact metrizable) group $\Gamma$ into the group of homeomorphism  of $X$.
Let  $\Cov(X)$ be the space of open covers of $X$, let $\Prob(X,T)$ be the space of $T$-invariant Borel probability measures on $X$. Write $\mathcal{V} \succeq \mathcal{U}$ to indicate  that the cover $\mathcal{V}$ refines $\mathcal{U}$ in the sense that for any $V \in \mathcal{V}$ there exists $U \in \mathcal{U}$ such that $V \subseteq U$.
We define the \emph{dimension} of $(X,T)$ as follows:

\begin{equation}\label{dim_basic_def}
\dim(X,T) = \sup_{\mathcal{U} \in \Cov(X)}\inf_{\mathcal{V} \succeq \mathcal{U}}\sup_{\mu \in \Prob(X,T)} \int_{X} \left(-1+\sum_{V \in \mathcal{V}}\mathbf{1}_V(x) \right)d\mu(x).    
\end{equation}

To have a direct comparison in mind, we remind the reader that Gromov's mean dimension for a dynamical system $(X,T)$, where the acting group $\Gamma$ is  amenable can be written as follows:

\begin{equation}\label{eq:mean_dim_def}
    \mdim(X,T) = \sup_{\mathcal{U} \in \Cov(X)}\lim_{n \to \infty}\frac{1}{|F_n|}\inf_{\mathcal{V} \succeq \bigvee_{\gamma \in F_n}T_\gamma(\mathcal{U})}\sup_{x \in X} \left(-1+\sum_{V \in \mathcal{V}}\mathbf{1}_V(x) \right),
\end{equation}
where $(F_n)_{n=1}^\infty$ is a F\o lner sequence in $\Gamma$ and $\vee_{\gamma \in F_n}T_\gamma(\mathcal{U})$ is the joint refinement of the iterates of $\mathcal{U}$ under elements of $F_n$, namely the cover consisting of elements of the form $\bigcap_{\gamma \in F_n}T_\gamma(U_\gamma)$, where for every $\gamma \in F_n$ $U_\gamma$ is some element of the cover $\mathcal{U}$.

The reader can readily check that in the case where the action $T$ is the trivial action we have $\dim(X,T)$ coincides with Lebesgue covering dimension of $X$, whereas the $\mdim(X,T)=0$ for the trivial action of any infinite amenable group $\Gamma$ on a compact metric space $X$. So at least in trivial cases the new notion of dimension does not coincide with Gromov's mean dimension.

We list some  features of this new invariant:

\begin{enumerate}
    \item The quantity $\dim(X,T)$ is monotonic with respect to embeddings of dynamical system. By this we mean that  if a $(X,T)$  and $(Y,S)$ are dynamical systems with the same acting group $\Gamma$, then $\dim(X,T) \le \dim(Y,S)$. A short proof of this fact appears as \Cref{prop:dim_emb_monotone}.
    \item In every context where the mean dimension of $(X,T)$ has been defined we have $\mdim(X,T) \le \dim(X,T)$. Mean dimension of a dynamical system has been defined  when the acting group is amenable, or more generally using Li's sofic mean dimension when the acting group is sofic \cite{MR3077882}. We prove that $\dim(X,T)$ is an upper bound for Gromov's mean dimension in \Cref{thm:mdim_le_dim}. The corresponding proof for the more general sofic case follows similar ideas, but requires introducing  additional terminology  in order to define  sofic groups and sofic mean dimension, so we do not present it here.
    \item For any $d \in \mathbb{N}$ and \emph{any} countable group $\Gamma$ we have $\dim(([0,1]^d)^\Gamma,\shift)=d$. We prove this fact as \Cref{thm:dim_cubical_shift}.
    \item In all known cases where finiteness of mean dimension has been confirmed to be a sufficient condition for shift embeddability, we can show that $\dim(X,T) = \mdim(X,T)$. In particular, $\dim(X,T)=\mdim(X,T)$ for any   $\Z^k$-dynamical system with the marker property. In particular, the new invariant coincides with Gromov's mean dimension for any $\Z$-system with  free minimal factor or with a free finite dimensional factor. A proof of this result appears as \Cref{thm:mdim_equal_dim_marker_property}.
    \item If $(X,T)$ is a dynamical system with acting group $\Gamma$ and $\Gamma_0$ is a finite-index subgroup of $\Gamma$, then $\dim(X,T) \ge \frac{1}{[\Gamma:\Gamma_0]}\dim(\Fix_H(X))$, so the invariant $\dim(X,T)$ already accounts for the periodic point constraints that arise from classical dimension theory.
    \item For some classes of dynamical systems, including all distal  dynamical systems, we can prove that finiteness of  $\dim(X,T) $ is a necessary and sufficient condition for shift embeddability. We state and prove this as  \Cref{cor:distal_emb}. Since an isometry is distal, we conclude that the recent Dranishnikov-Levin counterexamples must have infinite dimension in the sense of our new definition, so even for free $\Z$-dynamical systems it can happen that $\mdim(X,T)=0$ and $\dim(X,T)=\infty$. This is stated as  \Cref{cor:dim_ne_mdim}. We emphasize that our conclusion depends upon and does not replace the intricate arguments of \cite{dranishnikov2025freezactionisometriescompact}, which make use of the proof  of the equivariant Sullivan conjecture.   
    \item For any dynamical system $\dim(X,T)=0$ if and only if $(X,T)$ has the small boundary property. This is stated as \Cref{thm:SBP_iff_dim_0}. 
    \item The dimension of a dynamical system is an invariant of topological orbit equivalence. In fact it is an invariant of a weaker notion of equivalence that we call topological-measure pairs isomorphism. As a consequence we have that at least  for free minimal dynamical systems where the acting group is  $\mathbb{Z}^k$, mean dimension is an invariant of topological orbit equivalence.
    \item For any dynamical system finiteness of $\dim(X,T)$ is a  sufficient condition for \emph{almost shift embeddability}.  By almost shift embeddability we mean that there exists a continuous equivariant map into the shift over $[0,1]^d$ for some $d \in \mathbb{N}$ which induces a measure-theoretic isomorphism with respect to any invariant measure.  See \Cref{sec:almost_emb}.
\end{enumerate}

The rest of the paper is organized as follows: In \Cref{sec:ord_dim_cover} we recall basic results from classical dimension theory. Readers who are acquainted with classical dimension theory should find the content of this section quite familiar, although the treatment of the order of a cover as a function rather than a number is perhaps not completely standard.  In \Cref{sec:TM_pairs} we discuss the category of topological-measure pairs following Elliott and Niu \cite{elliott2025smallboundarypropertymathcal}  and define a notion of dimension in this category. In \Cref{sec:dynamical_systems} we briefly observe that any dynamical system gives a topological-measure pair.
This extends the notion of dimension to the category of dynamical systems.
In \Cref{sec:SBP} we recall the small boundary property and prove that a dynamical system has the small boundary property if and only if it has zero dimension in the sense of our new definition. In \Cref{sec:Baire_Category} we formulate and prove a technical Baire-category result that  is applied in later sections. In \Cref{Sec:finite_groups} we explain how the dimension of dynamical systems over finite groups can be expressed in terms of Lebesgue covering dimension. In \Cref{sec:dim_cube_shift} we prove that the dimension of the shift over $[0,1]^d$ over any countable group is equal to $d$.
In \Cref{sec:dim_amenable} we discuss the new notion of dimension in the case where the acting group is amenable. We prove that in this case our notion of dimension is always an upper bound for Gromov's mean dimension and prove that the two notion coincide in the case where the action has the uniform Rokhlin property. In \Cref{sec:almost_emb} we define a notion of almost embedding for dynamical systems and show that any dynamical system with dimension strictly less than $d/2$ almost embeds in the shift over $[0,1]^d$. As a corollary, we show deduce a sharp embedding theorem for distal dynamical systems over any countable group, and deduce that the Dranishnikov-Levin counterexample has infinite dimension in our sense.

I thank the anonymous referees for careful review and helpful comments.
I thank Misha Levine for motivating discussions and background in dimension theory.
I would like to deeply acknowledge Ilan Hirshberg and Chris Phillips who brought the work \cite{elliott2025smallboundarypropertymathcal} to my attention and asked the following insightful question that led me to consider the new notion of dimension: In the case of minimal $\Z$-systems, is mean dimension an invariant of topological measure pairs?

This research was partially
supported by the Israel Science Foundation grant No. 985/23.

\section{Order and dimension of covers}\label{sec:ord_dim_cover}

In this section, we recall classical notions involved in the definition of Lebesgue covering dimension. We also introduce notation and formulate auxiliary lemmas that will be applied in the proofs of subsequent results.

Let $X$ be a compact metric space.
Given a collection $\mathcal{V}$ of subsets of $X$ and $x \in X$, we denote 
\begin{equation}\label{eq:ord_at_x_def}
\ord(\mathcal{V},x) = -1+\sum_{V \in \mathcal{V}}\mathbf{1}_V(x).    
\end{equation}

We note that if $\mathcal{V}$ is a \emph{cover} of $X$ then $\ord(\mathcal{V},x) \ge 0$ for every $x \in X$.

For a collection $\mathcal{V}$ of pairwise disjoint pairwise disjoint subsets of $X$ we define $\mathbf{1}_{\mathcal{V}}:X \to \{0,1\}$ and $\overline{\mathbf{1}}_{\mathcal{V}}:X \to \{0,1\}$ by:
\begin{equation}
    \mathbf{1}_{\mathcal{V}}(x) := \sum_{V \in \mathcal{V}}\mathbf{1}_V(x),~x\in X
\end{equation}
and
\begin{equation}\label{eq:bar_ind_def}
    \overline{\mathbf{1}}_{\mathcal{V}}(x):= 1-\sum_{V \in \mathcal{V}}1_V(x) , ~ x\in X.
\end{equation}
Note that  if $\mathcal{V}$  is a collection   of pairwise disjoint subsets then $\ord(\mathcal{V},x) =  - \overline{\mathbf{1}}_{\mathcal{V}}(x)\in \{-1,0\}$ for every $x \in X$.

We write $\ord(\mathcal{V})= \sup_{x \in X} \ord(\mathcal{V},x)$.

Given two collections $\mathcal{V}$ and $\mathcal{U}$ of subsets of $X$, we say that  $\mathcal{V}$ \emph{refines}  $\mathcal{U}$, denoted by  $\mathcal{U} \preceq \mathcal{V}$, if every element of   $\mathcal{U}$ is contained in some element $\mathcal{V}$. 

We denote by $\Cov(X)$ the set of finite open covers of $X$ and by $\overline{\Cov}(X)$ the set of finite closed covers of $X$.
The \emph{dimension} of a cover $\mathcal{U}$ of $X$ is defined as follows:
\[
\dim(\mathcal{U}) = \inf_{\mathcal{V} \in \Cov(X)~:~ \mathcal{V} \succeq \mathcal{U}}\ord(\mathcal{V}).
\]
We recall that the Lebesgue covering dimension of $X$ is defined by \[\dim(X)=\sup_{\mathcal{U} \in \Cov(X)}\dim(\mathcal{U}).\]

Version of the  following basic lemma appear in many introductory topology textbooks:
\begin{lemma}\label{lem:smaller_cover_munkres}
Let $\{ U_1',\ldots,U_\ell'\}$ be a finite open cover of $X$. Then there exist open sets $U_1,\ldots, U_\ell$ so that $\overline{U_j} \subseteq U'_j$ and so that $\mathcal{U}=\left\{U_1,\ldots,U_\ell\right\}$ is still a cover of $X$.
\end{lemma}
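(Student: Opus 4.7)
The plan is a classical shrinking argument by induction on $j$, shrinking the open sets one at a time.

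More precisely, I will prove by finite induction on $j \in \{0,1,\ldots,\ell\}$ the following statement: there exist open sets $U_1,\ldots,U_j$ with $\overline{U_i} \subseteq U_i'$ for $i \le j$ such that $\{U_1,\ldots,U_j,U_{j+1}',\ldots,U_\ell'\}$ is still an open cover of $X$. The base case $j=0$ is the given cover. The conclusion of the lemma is the case $j=\ell$.

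For the inductive step, suppose $U_1,\ldots,U_{j-1}$ have been constructed with the stated property. Consider the closed set
\[
K_j = X \setminus \bigl(U_1 \cup \cdots \cup U_{j-1} \cup U_{j+1}' \cup \cdots \cup U_\ell'\bigr).
\]
Because $\{U_1,\ldots,U_{j-1},U_j',\ldots,U_\ell'\}$ covers $X$, every point of $K_j$ must lie in $U_j'$, so $K_j \subseteq U_j'$. Since $X$ is a compact metric space and hence normal, I can choose an open set $U_j$ with $K_j \subseteq U_j \subseteq \overline{U_j} \subseteq U_j'$. By construction, any point outside $K_j$ lies in one of the other members of the collection, and any point of $K_j$ lies in $U_j$, so $\{U_1,\ldots,U_j,U_{j+1}',\ldots,U_\ell'\}$ remains a cover. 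This completes the induction.

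There is no real obstacle: the only ingredient beyond elementary set-theoretic manipulation is the normality of $X$, which is immediate from compactness and metrizability (or from the fact that a closed set and a disjoint closed set in a metric space can always be separated by disjoint open neighborhoods, applied to $K_j$ and $X \setminus U_j'$).
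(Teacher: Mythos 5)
Your proof is correct and is precisely the standard shrinking-lemma argument; the paper does not supply its own proof but refers to \cite[Theorem 36.2]{MR464128}, whose proof is this same finite induction via normality. Nothing to add.
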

For a proof see for instance \cite[Theorem 36.2]{MR464128}.

From \Cref{lem:smaller_cover_munkres} it follows directly that for any $\mathcal{U} \in \Cov(X)$ we have
\[
\dim(\mathcal{U}) = \inf_{\overline{\mathcal{V}} \in \overline{\Cov}(X)~:~ \overline{\mathcal{V}} \succeq \mathcal{U}}\ord(\mathcal{V}).
\]

Given two covers $\mathcal{U}_1,\mathcal{U}_2 \in \Cov(X)$, the joint refinement is given by
\[\mathcal{U}_1 \vee \mathcal{U}_2 := \left\{ 
U_1 \cap U_2~:~ U_1 \in \mathcal{U}_1 ,~ U_2 \in \mathcal{U}_2\right\}.\]

A set  $A \subset \mathbb{R}^d$ is said to be \emph{locally finite} if $A \cap K$ is finite for every compact set $K \subset \R^d$. 
Similarly a cover $\mathcal{C}$ of $\mathcal{R}^d$ is locally finite if for every compact $K \subset \R^d$ we have that $\mathcal{C}_K:= \{C \cap K~:~ C \in \mathcal{C}\}$ is a finite cover of $K$.

Given a family of sets $\mathcal{U}$ in a metric space $X$ we denote by $\mesh(\mathcal{U})$ the supremum of the diameter of the elements of $\mathcal{U}$.

The  following elementary lemma  expresses specific properties of certain  ``brickwall covers'' of  $\mathbb{R}^d$ that will be used in the proof of subsequent results:

\begin{lemma}[The ``Brickwall'' Cover]\label{lem:cube_cover_R_d}
For any $d \in \N$ and any $\epsilon >0$ there exists a locally finite cover $\mathcal{C}$ of $\mathbb{R}^d$ by closed sets with $\mesh(\mathcal{C}) \le \epsilon$ and pairwise disjoint locally finite sets $A_1,\ldots, A_d \subset \mathbb{R}$ such that the following holds:
%\begin{enumerate}
 %   \item The sets $A_1,\ldots,A_d$ are pairwise disjoint.
 %   \item For any $v=(v_1,\ldots,v_d) \in \mathbb{R}^d$ we have
    \begin{equation}\label{eq:ord_C_d_bound}
        \ord(\mathcal{C},v) \le \sum_{\ell =1}^d\mathbf{1}_{A_\ell}(v_\ell) ~\forall
        v=(v_1,\ldots,v_d) \in \mathbb{R}^d.
    \end{equation}
%\end{enumerate}

\end{lemma}
\begin{proof} 
    We will consider $\mathbb{R}^d$ with respect to the $\|\cdot\|_\infty$ norm  so for $v=(v_1,\ldots,v_d) \in \mathbb{R}^d$ and $w=(w_1,\ldots,w_d)\in \mathbb{R}^d$ the distance between $v$ and $w$  is equal to  $\|v-w\|_\infty= \max_{1\le \ell \le d}|v_i-w_i|$ (the statement of the lemma does not depend on the choice of norm).
    Fix $\epsilon >0$.
    We will prove the statement of the lemma  by induction on $d$.
    For $d=1$, we can choose $A= \frac{\epsilon}{2}\mathbb{Z}$ and $\mathcal{C}= \{[(\epsilon/2) n,(\epsilon/2)(n+1)]~:~ n \in \mathbb{Z}\}$. Then $A$ is clearly locally finite and $\mathcal{C}$ is a cover of $\mathbb{R}$ by closed sets (intervals). Since the intervals in $\mathcal{C}$ only intersect in their endpoints, which are exactly the set $A$, we have $\ord(\mathcal{C},v) = \mathbf{1}_A(v) $ for any $v \in \R$.

    Assume by induction that we found  pairwise disjoint locally finite sets $A_1,\ldots,A_d \subset \mathbb{R}$ and a cover $\mathcal{C}$ of $\mathbb{R}^d$ by closed sets with $\mesh(\mathcal{C}) \le \epsilon$ so that \Cref{eq:ord_C_d_bound} holds.

    For every $1 \le \ell \le d$  the set $A_\ell$ is locally finite, hence countable. So there exists $t \in \mathbb{R}$ such that  $A_i \cap (t+A_j)=\emptyset$ for every $1\le i,j \le d$.
    Choose such $t$ and define for every $1\le \ell \le d$,  $A_\ell' :=A_\ell \cup (t+A_\ell)$. Then the sets $A_1',\ldots,A'_d$ are locally finite and pairwise disjoint.
Furthermore, because $A:= \bigcup_{\ell=1}^d A_\ell'$ is a countable set, there exists $s \in \mathbb{R}$ such that $A \cap (s+\frac{\epsilon}{2}\mathbb{Z})=\emptyset$. Let $A'_{d+1} := (s+\frac{\epsilon}{2}\mathbb{Z})$.

Then $A_1',\ldots,A'_{d+1} \subset \mathbb{R}$ are pairwise disjoint locally finite sets.
Define:
\[
w = (\underbrace{t,\ldots,t}_d) \in \mathbb{R}^d,
\]
\[
\mathcal{C}' = \left\{
C \times [s+\epsilon n,s+\epsilon n+ \epsilon/2]~:~ C \in \mathcal{C}_d,~ n \in \Z
\right\}\]
\[ \mathcal{C}''= \ \left\{ 
(w+C)\times [s+\epsilon n + \frac{\epsilon}{2},s+\epsilon(n+1)]~:~ C \in \mathcal{C}_d,~ n \in \Z
\right\}.
\]
and 
\[ \mathcal{C}^* = \mathcal{C}' \cup \mathcal{C}''. \]
Then $\mathcal{C}^*$ is a cover of $\mathbb{R}^{d+1}$ by closed sets, and $\mesh(\mathcal{C}^*)\le \epsilon$.
Let us check that for any $v=(v_1,\ldots,v_{d+1}) \in \mathbb{R}^{d+1}$ we have
\[
\ord(\mathcal{C}^*,v) \le \sum_{\ell=1}^{d+1}\mathbf{1}_{A_\ell'}(v_\ell).
\]
Indeed, if there exists $n \in \mathbb{Z}$ such that $v_{d+1} \in (s+\epsilon n,s+\epsilon n+\frac{\epsilon}{2})$, then 
\[
\ord(\mathcal{C}^*,v) =\ord(\mathcal{C},(v_1,\ldots,v_d)) \le \sum_{\ell=1}^d \mathbf{1}_{A_\ell}(v_\ell) \le \sum_{\ell=1}^{d+1} \mathbf{1}_{A_\ell'}(v_\ell) .
\]
If there exists $n \in \mathbb{Z}$ such that $v_{d+1} \in (s+\epsilon n +\frac{\epsilon}{2},s+\epsilon(n+1))$, then
\[
\ord(\mathcal{C}^*,v) =\ord(\mathcal{C},(v_1-t,\ldots,v_d-t)) \le \sum_{\ell=1}^d \mathbf{1}_{t+A_\ell}(v_\ell) \le \sum_{\ell=1}^{d+1} \mathbf{1}_{A_\ell'}(v_\ell) .
\]
Otherwise, there exists $n \in \mathbb{Z}$ such that $v_{d+1}=\frac{\epsilon}{2}n$. In this case $v_{d+1} \in A_{d+1}'$.
Let $v'=(v_1,\ldots,v_d) \in \mathbb{R}^d$ . 
\[ \ord(\mathcal{C}^*,v)=1+\ord(\mathcal{C},v')+ \ord(\mathcal{C},v'-w)\le\]
\[1+ 
\sum_{\ell=1}^d\mathbf{1}_{A_\ell}(v_\ell)+\sum_{\ell=1}^d\mathbf{1}_{A_\ell}(v_\ell-t)=
\]
\[
= 1+ \sum_{\ell=1}^d\left( \mathbf{1}_{A_\ell}(v_\ell)+\mathbf{1}_{t+A_\ell}(v_\ell)\right)=
\]
\[
=\mathbf{1}_{A_{d+1}'}(v_{d+1})+ \sum_{\ell=1}^d\mathbf{1}_{A'_\ell}(v_\ell)=\sum_{\ell=1}^{d+1}\mathbf{1}_{A'_\ell}(v_\ell).\]
In the last equality we used that $\mathbf{1}_{A'_\ell}(v_\ell)=\mathbf{1}_{A_\ell}(v_\ell)+\mathbf{1}_{t+A_\ell}(v_\ell)$ because the sets are disjoint.
This completes the induction step of the proof.
\end{proof}

Following \cite[Definition 2.2]{MR1749670}, we say that a continuous map $f:X \to Y$ is $\mathcal{U}$-compatible if there exist a finite open cover $\mathcal{V}$ of $f(X)$ such that $\mathcal{U} \preceq f^{-1}(\mathcal{V})$. 

The following lemma is a   minor variation of a well known characterization of  $\dim(\mathcal{U})$ in terms of maps in to simplicial complexes,  as presented  in   \cite[Proposition $2.4$]{MR1749670}. The proof below, which we include for completeness, is nearly identical to an argument appearing in  \cite{MR1749670}. 
\begin{lemma}[Simplicial complexes and nerve maps]\label{lem:compatible_maps_complexes}
    Let $X$ is a compact metrizable space,  $\mathcal{U} \in \Cov(X)$ and $\phi:X \to \mathbb{Z}_+$ a non-negative integer valued function. Then the following are equivalent:
    \begin{enumerate}
        \item There exists an open cover $\mathcal{V}$ with $\mathcal{U} \preceq \mathcal{V}$ and 
        \[
        \ord(\mathcal{V},x) \le \phi(x) ~ \forall x \in X.
        \]
        \item There exists a simplicial complex $K$ and a $\mathcal{U}$-compatible continuous function $f:X \to K$ such that
        for every $x \in X$, the point $f(x)$ is contained in  a simplex of $K$ that has dimension at most $\phi(x)$.
    \end{enumerate}
\end{lemma}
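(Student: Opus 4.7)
The plan is to prove the two implications separately: the implication (1)$\Rightarrow$(2) will use the standard nerve construction, while (2)$\Rightarrow$(1) will use the pullback of the open star cover of a sufficiently fine barycentric subdivision of $K$.

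For (1)$\Rightarrow$(2), I will take $K$ to be the geometric realization of the nerve of $\mathcal{V}$: its vertices are the elements of $\mathcal{V}$, and a subset $\{V_0,\dots,V_k\}\subseteq\mathcal{V}$ spans a simplex iff $V_0\cap\dots\cap V_k\ne\emptyset$. A canonical partition of unity on $X$ is given by $\rho_V(x):=d(x,X\setminus V)/\sum_{V'\in\mathcal{V}}d(x,X\setminus V')$; the denominator is everywhere positive because $\mathcal{V}$ is a cover, and the key feature is that $\rho_V(x)>0$ iff $x\in V$. Setting $f(x):=\sum_{V\in\mathcal{V}}\rho_V(x)\,v_V$, the point $f(x)$ lies in the open simplex of $K$ spanned by $\{v_V:x\in V\}$, which has dimension exactly $\ord(\mathcal{V},x)\le\phi(x)$. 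For $\mathcal{U}$-compatibility, consider the open star cover $\mathcal{S}=\{\mathrm{star}(v_V):V\in\mathcal{V}\}$ of $K$; its pullback satisfies $f^{-1}(\mathrm{star}(v_V))=\{x:\rho_V(x)>0\}=V$, so $f^{-1}(\mathcal{S})=\mathcal{V}$, which refines $\mathcal{U}$ by assumption.

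For (2)$\Rightarrow$(1), fix an open cover $\mathcal{W}$ of $f(X)$ such that $f^{-1}(\mathcal{W})$ refines $\mathcal{U}$. Since $f(X)$ is compact, I may replace $K$ by the smallest subcomplex containing $f(X)$ and thus assume $K$ is a finite polyhedron and that $\mathcal{W}$ extends to an open cover of $K$. Fix a metric on $K$ and take iterated barycentric subdivisions to obtain a subdivision $K'$ whose mesh is smaller than a Lebesgue number of $\mathcal{W}$; then the open star of each vertex of $K'$ is contained in some element of $\mathcal{W}$, so the open star cover $\mathcal{S}'$ of $K'$ refines $\mathcal{W}$. I then set $\mathcal{V}:=f^{-1}(\mathcal{S}')$, which refines $f^{-1}(\mathcal{W})$ and therefore refines $\mathcal{U}$. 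For the order bound, each point $f(x)$ lies in the interior of a unique simplex $\tau$ of $K'$, and $\tau$ is contained in the open simplex $\sigma$ of $K$ that contains $f(x)$; the open stars of $K'$ containing $f(x)$ are precisely those of the vertices of $\tau$, yielding $\ord(\mathcal{V},x)=\ord(\mathcal{S}',f(x))=\dim\tau\le\dim\sigma\le\phi(x)$.

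The only substantive technical ingredient is the standard mesh argument for polyhedra: iterated barycentric subdivisions of a finite polyhedron have mesh tending to zero, ensuring that the open star cover of a sufficiently fine subdivision refines any prescribed open cover. I expect this to be the main (though classical and routine) obstacle; the remainder is bookkeeping, matching barycentric coordinates with simplex dimensions and star pullbacks with elements of the original cover.
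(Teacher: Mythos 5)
Your proof is correct and takes essentially the same approach as the paper: a nerve/partition-of-unity construction for (1)$\Rightarrow$(2) and a pullback of a refining cover of $K$ for (2)$\Rightarrow$(1). The only difference is that where the paper simply asserts the existence of a cover $\mathcal{W}_0$ of $K$ with $\ord(\mathcal{W}_0,v)\le\dim(\text{carrier of }v)$, you supply the standard construction (open stars of a sufficiently fine barycentric subdivision) — a useful explicit detail, though note that the closed simplex $\tau$ of $K'$ containing $f(x)$ in its interior lies in the \emph{closed} simplex of $K$ carrying $f(x)$, not its open simplex, which is what actually yields $\dim\tau\le\dim\sigma$.
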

\begin{proof}
Suppose that there exists a simplicial complex $K$  and a $\mathcal{U}$-compatible continuous function $f:X \to K$ such that
        for every $x \in X$ the point $f(x)$ belongs to a simplex of $K$ having dimension at most $\phi(x)$. Let $\mathcal{W} \in \Cov(K)$ be such that $\mathcal{U} \preceq f^{-1}(\mathcal{W})$. Find $\mathcal{W}_0 \in \Cov(X)$ such that $\mathcal{W} \preceq \mathcal{W}_0$ and $\ord(\mathcal{W}_0,v) \le k$ whenever $w$ belongs to a $k$-dimensional simplex in $K$. Then $\mathcal{V}: =f^{-1}(\mathcal{W}_0) \in \Cov(X)$ satisfies $\mathcal{U} \preceq \mathcal{V}$ and $\ord(\mathcal{V},x) \le \phi(x)$ for every $x \in X$.

    Conversely, suppose that there exists $\mathcal{V}=\{V_1,\ldots,V_r \in \Cov(X)$ with $\mathcal{U} \preceq \mathcal{V}$ and $\ord(\mathcal{V},x) \le \phi(x)$ for all $x \in X$.
    %By compactness of $X$ we can assume that $\mathcal{V}=\{V_1,\ldots,V_d\}$ is a finite open cover.
    Let $f=(f_1,\ldots,f_r):X \to [0,1]^r$ be a partition of unity subordinate to the cover $\mathcal{V}$. 
    The image $f(X)$ is contained in the simplicial complex $K \subseteq [0,1]^d$ whose simplices are of the form 
    \[
    S_J := \left\{ (v_1,\ldots,v_d) \in [0,1]^d~:~ \sum_{j \in J}v_j=\sum_{j=1}^d v_j =1\right\},
    \]
    where $J \subseteq \{1,\ldots,J\}$ ranges over subsets such that $\bigcap_{j \in J}V_j \ne \emptyset$.
    Then $f:X \to K$ is $\mathcal{V}$-compatible hence $\mathcal{U}$-compatible and every $x \in X$ belongs to a simplex in $K$ having dimension at most $\phi(x)$.    
\end{proof}

The following lemma refines  of the subadditivity property
\[
\dim(\mathcal{U}_1\vee \mathcal{U}_2) \le \dim(\mathcal{U}_1)+\dim(\mathcal{U}_2),
\]
as presented in \cite{MR1749670}.  We need the  slightly more involved  statement below for later applications. Once again, we include  a proof for completeness and reassurance.

\begin{lemma}\label{lem:ord_subadditive}
Let $X$ be a compact metrizable space and $\mathcal{U}_1,\mathcal{U}_2 \in \Cov(X)$. Then there exists $\mathcal{V} \in \Cov(X)$ such that
$\mathcal{U}_1 \vee \mathcal{V}_2 \preceq \mathcal{V}$
and 
\begin{equation}
    \ord(\mathcal{V},x) \le \ord(\mathcal{U}_1,x) + \ord(\mathcal{U}_2,x) ~\forall x \in X.
\end{equation}
\end{lemma}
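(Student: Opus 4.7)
The plan is to invoke \Cref{lem:compatible_maps_complexes} in both directions, using it to translate the problem into a question about continuous maps into simplicial complexes, where a product construction becomes natural.

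First I would apply \Cref{lem:compatible_maps_complexes} to each $\mathcal{U}_i$ ($i = 1, 2$) separately. The hypothesis of condition (1) in that lemma is satisfied trivially by taking the cover to be $\mathcal{U}_i$ itself together with the function $\phi_i(x) = \ord(\mathcal{U}_i, x)$. This yields, for each $i$, a simplicial complex $K_i$ and a $\mathcal{U}_i$-compatible continuous map $f_i : X \to K_i$ such that for every $x \in X$, the image $f_i(x)$ lies in a simplex of $K_i$ of dimension at most $\ord(\mathcal{U}_i, x)$.

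Next I would form the product map $f = (f_1, f_2) : X \to K_1 \times K_2$ and equip the product with a simplicial subdivision $K$ of the product cell complex in which each product cell $\sigma_1 \times \sigma_2$ is triangulated using simplices of dimension exactly $\dim \sigma_1 + \dim \sigma_2$ (the standard staircase/prism triangulation). For any $x \in X$, writing $f_i(x) \in \sigma_i$ with $\dim \sigma_i \le \ord(\mathcal{U}_i, x)$, the point $f(x)$ lies in $\sigma_1 \times \sigma_2$ and hence in a simplex of $K$ of dimension at most $\ord(\mathcal{U}_1, x) + \ord(\mathcal{U}_2, x)$. To check $\mathcal{U}_1 \vee \mathcal{U}_2$-compatibility of $f$, I would pick open covers $\mathcal{W}_i$ of $f_i(X)$ witnessing the $\mathcal{U}_i$-compatibility of $f_i$, and note that the product cover $\{W_1 \times W_2 : W_i \in \mathcal{W}_i\}$ pulls back under $f$ to $f_1^{-1}(\mathcal{W}_1) \vee f_2^{-1}(\mathcal{W}_2)$, each of whose elements is contained in an intersection of an element of $\mathcal{U}_1$ with an element of $\mathcal{U}_2$, that is, in an element of $\mathcal{U}_1 \vee \mathcal{U}_2$.

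Finally, I would invoke the other direction of \Cref{lem:compatible_maps_complexes}, applied with the complex $K$, the map $f : X \to K$, and $\phi(x) = \ord(\mathcal{U}_1, x) + \ord(\mathcal{U}_2, x)$, to extract the desired cover $\mathcal{V} \in \Cov(X)$ satisfying $\mathcal{U}_1 \vee \mathcal{U}_2 \preceq \mathcal{V}$ and the pointwise bound $\ord(\mathcal{V}, x) \le \ord(\mathcal{U}_1, x) + \ord(\mathcal{U}_2, x)$. The only mildly delicate point is the triangulation of $K_1 \times K_2$ with simplices of dimension equal to the sum of the factor dimensions, but this is a classical construction; everything else is a straightforward bookkeeping exercise once the simplicial-complex reformulation is set up.
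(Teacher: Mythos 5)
Your proposal matches the paper's own proof essentially step for step: apply \Cref{lem:compatible_maps_complexes} to get maps $f_i:X\to K_i$, form the product map $f=(f_1,f_2):X\to K_1\times K_2$, and apply \Cref{lem:compatible_maps_complexes} in the reverse direction. You are slightly more careful than the paper in explicitly noting the need for a triangulation of $K_1\times K_2$ in which product cells are subdivided without raising dimension, and in verifying the $\mathcal{U}_1\vee\mathcal{U}_2$-compatibility claim; these are details the paper elides but does implicitly rely on.
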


\begin{proof}
   By \Cref{lem:compatible_maps_complexes} there exist simplicial complexes $K_1$ and $K_2$ and maps $f_1:X \to K_1$, $f_2:X \to K_2$ that are $\mathcal{U}_1$-compatible  and $\mathcal{U}_2$-compatible respectively and so that every  $x \in X$ is mapped via $f_i$ into  a simplex of $K_i$ having dimension at most $\ord(\mathcal{U}_i,x)$ for $i=1,2$.
    Let $f=(f_1,f_2):X \to K_1\times K_2$. Then $f$ is $\mathcal{U}_1 \vee \mathcal{U}_2$ compatible. Also,  every $x \in X$  is mapped via $f$ into   a simplex of $K_1 \times K_2$ having dimension at most $\ord(\mathcal{U}_1,x)+\ord(\mathcal{U}_2,x)$. Using \Cref{lem:compatible_maps_complexes} again, there exists a an open cover $\mathcal{V} \in \Cov(X)$ with $\mathcal{U}_1 \vee\mathcal{V}_2 \preceq \mathcal{V}$ so that 
    \[
    \ord(\mathcal{V},x) \le \ord(\mathcal{U}_1,x)+\ord(\mathcal{U}_2,x) ~\forall x \in X.
    \]
\end{proof}

The following lemma is a (slight refinement)  of the  ``Ostrand-Kolmogorov method'', introduced by Ostrand  in the context of  Hilbert's problem 13  \cite{MR177391},  following earlier work of Kolmogorov \cite{MR111809} and Arnold  \cite{MR111808}. The exposition below essentially follows  \cite{levin2023finitetoone}, and again is included mostly for completeness: 
\begin{lemma}[Ostrand-Kolmogorov covers]\label{lem:ostrand_kolmogorov_covers}
Let $X$ be a compact metric space and let $\mathcal{U} \in \Cov(X)$ be a finite open cover of $X$. Then for any $k \in \N$ there exists $k$ finite families $\mathcal{C}_0,\ldots,\mathcal{C}_{k-1}$, each of which consists of closed, pairwise disjoint closed subsets of $X$ so that
%\begin{enumerate}
 %   \item
    $\mathcal{C}_j \succeq \mathcal{U}$ for every $0\le j < k$
 %   \item For every $x \in X$
 %   \[
 %   \sum_{j=0}^{k-1}\sum_{C \in \mathcal{C}_j}\mathbf{1}_C(x) \ge k - \ord(\mathcal{U},x).
 %   \]
 %   or equivalently 
 %   \[
 %   \sum_{j=0}^{k-1} \ord(\mathcal{C}_j,x) \ge -\ord(\mathcal{U},x).
 %   \]
 and
 \begin{equation}\label{eq:ostrand_ord}
 \sum_{j=0}^{k-1}\overline{\mathbf{1}}_{\mathcal{C}_j}(x) \le \ord(\mathcal{U},x) ~\forall x \in X.
 \end{equation}
 %\]
%\end{enumerate}
\end{lemma}
\begin{proof}
    Let $\mathcal{U} =\{U_1,\ldots,U_r\} \in \Cov(X)$ be a finite open cover and $k \in \mathbb{N}$. 
    
Let  
$f_1,\ldots,f_r:X \to [0,1]$ be a partition of unity subordinate to the cover  $\mathcal{U}$ and let $f:X \to [0,1]^r$ be 
given by $f(x)=(f_1(x),\ldots,f_r(x))$.

Let $F = \{ 0  < t_1< \ldots < t_N <1 \}  \subset (0,1)$ 
be a finite set of numbers so that the elements of $F\cup\{1\}$ 
are linearly independent over $\mathbb{Q}$ and so that for every $1 \le i < N$ we have that $t_{i+1} < t_i +\frac{1}{r}$.

Let 
\begin{equation}\label{eq:delta_def}
\delta := \min\left\{ |1-\sum_{j=1}^\ell s_j | ~:~ 1\le \ell \le r ,~s_1,\ldots,s_\ell \in  F  \right\}.
\end{equation}
Because the elements of $F\cup\{1\}$ are linearly independent over $\mathbb{Q}$, we have that $\delta>0$.

Choose $\epsilon >0$ so that $\epsilon < \frac{1}{2kr} \delta$ and $\epsilon < \frac{1}{2k}\min\{ |t_{i+1}-t_i| ~:~ 1 \le i< N\}$.
For every $0 \le j <k$ let 
\[\mathcal{I}_j
= \{[0,t_1+(j-1)\epsilon]\} \cup \{ [t_i +j\epsilon,t_{i+1}+(j-1)\epsilon]~:~ 1\le i < N\} \cup \{[t_N+j \epsilon,1]\}.
\]
Then each $\mathcal{I}_j$ is a collection of pairwise disjoint closed intervals of length less than $\frac{1}{r}$. 
Note that  by the choice $\epsilon < \frac{1}{2k}\min\{ |t_{i+1}-t_i| ~:~ 1 \le i< N\}$, the sets $V_j := [0,1]\setminus \bigcup_{I \in \mathcal{I}_j}I$ are pairwise disjoint. It follows that for every $t \in [0,1]$
there exists at most one index $0\le j < k$ such that $t \not \in \bigcup_{I \in \mathcal{I}_j}I$, so the following holds:
%We can summarize the above discussion by the following equations:
\begin{equation}\label{eq:I_ord_bound}
\sum_{j=0}^{k-1}\overline{\mathbf{1}}_{\mathcal{I}_j}(t) \in \{0,1\} ~ \forall t \in [0,1].
\end{equation}
Moreover, if $\sum_{j=0}^{k-1}\overline{\mathbf{1}}_{\mathcal{I}_j}(t) = 1$ then there exists $1\le i \le N$ such that $|t-t_i| < k \epsilon < \frac{1}{2r}\delta$.
\begin{equation}\label{eq:ord_t_close_to_t_i}
    \sum_{j=0}^{k-1}\overline{\mathbf{1}}_{\mathcal{I}_j}(t) = 1 ~ \Longrightarrow  \min_{1\le i \le N}|t-t_i| < k \epsilon < \frac{1}{2r}\delta
\end{equation}
Let $T_r = \{ (v_1,\ldots,v_r) \in [0,1]^r~:~ \sum_{\ell=1}^r v_\ell=1\}$ denote the standard simplex in $\R^r$. 
Given $v \in T_d$ let  $\mathrm{supp}(v) \subseteq \{1,\ldots,r\}$ denote the set of non-zero coordinates in $v$. 
We claim that for any $v= (v_1,\ldots,v_r) \in  T_r$ we have 
\begin{equation}\label{eq:ord_v_supp_bound}
\sum_{\ell=1}^r\sum_{j=0}^{k-1}\overline{\mathbf{1}}_{\mathcal{I}_j}(v_\ell) \le |\mathrm{supp}(v)|-1.    
\end{equation}

Indeed, since $\overline{\mathbf{1}}_{\mathcal{I}_j}(0)=0$ for every $0\le j < k$ we have that for every $(v_1,\ldots,v_r) \in  T_r$ 
\[
\sum_{\ell=1}^r\sum_{j=0}^{k-1}\overline{\mathbf{1}}_{\mathcal{I}_j}(v_\ell)= \sum_{ \ell \in \mathrm{supp}(v)}\sum_{j=0}^{k-1}\overline{\mathbf{1}}_{\mathcal{I}_j}(v_\ell).
\]
By \eqref{eq:I_ord_bound}, in order to prove \eqref{eq:ord_v_supp_bound} it suffices to show that for any $(v_1,\ldots,v_r) \in T_r$ there  exists $1\le \ell_0 \le r$ such that $\sum_{j=0}^{k-1}\overline{\mathbf{1}}_{\mathcal{I}_j}(v_\ell)=0$. 
However, if $(v_1,\ldots,v_r) \in T_r$ then $\sum_{\ell\in \mathrm{supp}(v)} v_\ell =1$ so
\[
\sum_{\ell \in \mathrm{supp}(v)} \min_{1 \le i \le N}|v_\ell -t_i| \ge \min_{1\le m \le r}\min_{s_1,\ldots,s_m \in F}\left|1- \sum_{j=1}^m s_j \right| \ge \delta.
\]
It follows that there  exists $\ell_0 \in \mathrm{supp}(v)$ so that
$\min_{1 \le i \le N}|v_{\ell_0} -t_i|  \ge \frac{\delta}{r}$, and so by \eqref{eq:ord_t_close_to_t_i} $\sum_{j=0}^{k-1}\overline{\mathbf{1}}_{\mathcal{I}_j}(v_{\ell_0}) =0$, so we have proved \eqref{eq:ord_v_supp_bound}.

For every $1\le j \le k$ let

  \[
   \mathcal{D}_j = \left\{ I_1 \times\ldots \times  I_r ~:~ I_1,\ldots I_r \in \mathcal{I}_j \right\}.
   \]

Then $\mathcal{D}_1,\ldots,\mathcal{D}_k$ are collections of pairwise disjoint closed cubes in $\mathbb{R}^r$, and for every $v = (v_1,\ldots,v_r) \in T_d$ and $0\le j < k$ we have

$\overline{\mathbf{1}}_{\mathcal{D}_j}(v) \le
\sum_{\ell=1}^r\overline{\mathbf{1}}_{\mathcal{I}_j}(v_\ell)$, so summing over $0\le j < k$ we have
\[
\sum_{j=0}^{k-1} \overline{\mathbf{1}}_{\mathcal{D}_j}(v) \le 
\sum_{j=0}^{k-1}\sum_{\ell=1}^r\overline{\mathbf{1}}_{\mathcal{I}_j}(v_\ell),
\]
So using \eqref{eq:ord_v_supp_bound} we have

\begin{equation}\label{eq:sim_D_lower_bound}
\sum_{j=0}^{k-1}\overline{\mathbf{1}}_{\mathcal{D}_j}(v) \le |\mathrm{supp}(v)|-1.
\end{equation}

Let \[\mathcal{C}_j= \{ f^{-1}(B)~:~ B \in \mathcal{D}_j \}.\]
Then $\mathcal{C}_1,\ldots,\mathcal{C}_k$ are are collections of pairwise disjoint closed subsets of $X$.
Since $\sum_{\ell=1}^r f_\ell(x) \ge 1$ for every $x \in X$, it follows that for every $x \in X$ there exists $1 \le \ell \le r$ such that $f_\ell(x) \ge \frac{1}{r}$. Since the length of each  interval $I \in \bigcup_{j=1}^r \mathcal{I}_j$ is strictly less than $\frac{1}{r}$, it follows that for every every $B = I_1\times  \ldots \times I_r \in \bigcup_{j=1}^k \mathcal{D}_j$ such that $f^{-1}(B) \ne \emptyset$ there exists $1\le j \le k$ such that $I_j \subseteq (0,1]$, hence $f^{-1}(B) \subseteq U_j$. This prove that for every $\mathcal{U} \preceq \mathcal{C}_j$ for every $0 \le j <k$.

Since  $f_1,\ldots,f_r$ are a partition of unity subordinate to $\mathcal{U}$ it follows that $|\mathrm{supp}(f(x))| \le \ord(\mathcal{U},x)+1$ for every $x \in X$, so from \Cref{eq:sim_D_lower_bound} we get that
\[
\sum_{j=0}^{k-1}\overline{\mathbf{1}}_{\mathcal{C}_j}(x) \le   \ord(\mathcal{U},x).
\]

\end{proof}

\section{Dimension in the category of topological-measure pairs}
\label{sec:TM_pairs}

Given a topological metric  
 space $X$, we denote by $\Prob(X)$ the simplex of Borel probability measures on $X$. The space $\Prob(X)$ will be equipped with the weak-$*$ topology, which makes it a compact metrizable topological space.

By a \emph{topological-measure pair} we will mean a pair $(X,P)$, where $X$ is a compact metric space and $P \subseteq \Prob(X)$ is a closed, convex subset. Topological-measure pairs have been considered the context of mean dimension in   \cite{elliott2025smallboundarypropertymathcal}.

A \emph{morphism} between topological-measure pairs $(X,P)$ and $(Y,Q)$ is a continuous function $f:X \to Y$ such that the induced affine map $f_*:\Prob(X) \to \Prob(Y)$ satisfies $f_*(P) \subseteq Q$. An isomorphism of $(X,P)$ and $(Y,S)$ is a morphism $f:X \to Y$ that is also a homeomorphism  between $X$ and $Y$ and so that $f_*(P)=Q$. Similarly, an embedding of $(X,P)$ into $(Y,Q)$ is a  morphism $f:X \to Y$ that is also an embedding of  $X$ into  $Y$.

Given a topological-measure pair $(X,P)$ with $P \ne \emptyset$ and a finite collection $\mathcal{V}$ of Borel subsets of $X$ denote:
\begin{equation}\label{eq:ord_P_def}
\ord(\mathcal{V},P) = \sup_{\mu \in P} \int_X \ord(\mathcal{V},x) d\mu(x).    
\end{equation}

For $\mathcal{U} \in \Cov(X)$ denote:
\begin{equation}\label{eq:dim_P_def}
\dim(\mathcal{U},P) = \inf_{\mathcal{V} \in \Cov(X)~:~ \mathcal{V} \succeq \mathcal{U}}\ord(\mathcal{V},P),
\end{equation}

Define the \emph{dimension} of the topological-measure pair $(X,P)$ to be 
\begin{equation}\label{eq:dim_X_T_def}
\dim(X,P) = \sup_{\mathcal{U} \in \Cov(X)}\dim(\mathcal{U},P).
\end{equation}

When  $P = \emptyset$ , we declare $\dim(X,P)=\dim(X,\emptyset)=-\infty$.
Observe that 
for any (non-empty) compact metric space $X$, $\dim(X,\Prob(X))$ is precisely the Lebesgue covering dimension of $X$.
Note that $\dim(\emptyset)=-\infty$ according to our convention.

\begin{proposition}\label{prop:dim_emb_monotone}
If a topological-measure pair $(X,P)$ embeds in another pair $(Y,Q)$, then $\dim(X,P) \le \dim(Y,Q)$. 
In particular, $\dim(X,P)$ is an invariant of isomorphism in the category of topological-measure pairs.
\end{proposition}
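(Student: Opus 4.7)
The plan is to lift covers from $X$ up to $Y$ and to push measures from $P$ forward to $Q$ via the embedding, then combine both directions with the change-of-variables formula for $\ord$.

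Let $f\colon X\to Y$ be the given embedding, so $f$ is a topological embedding and $f_*(P)\subseteq Q$. First handle the degenerate case $P=\emptyset$, where $\dim(X,P)=-\infty\le \dim(Y,Q)$ is immediate. Henceforth assume $P\ne\emptyset$. Since $X$ is compact and $f$ is continuous and injective, $f(X)$ is a compact, hence closed, subset of $Y$, so $W:=Y\setminus f(X)$ is open in $Y$.

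Fix $\mathcal{U}\in\Cov(X)$. For each $U\in\mathcal{U}$ pick an open set $\tilde U\subseteq Y$ with $f^{-1}(\tilde U)=U$ (possible because $f$ is a topological embedding), and set
\[
\tilde{\mathcal{U}}:=\{\tilde U \,:\, U\in\mathcal{U}\}\cup\{W\}\in \Cov(Y).
\]
For any $\tilde{\mathcal{V}}\in\Cov(Y)$ with $\tilde{\mathcal{V}}\succeq\tilde{\mathcal{U}}$, define
\[
\mathcal{V}:=\{f^{-1}(\tilde V) \,:\, \tilde V\in\tilde{\mathcal{V}}\}\setminus\{\emptyset\}.
\]
Then $\mathcal{V}\in\Cov(X)$ and $\mathcal{V}\succeq\mathcal{U}$: indeed, if $\tilde V\subseteq\tilde U$ for some $U\in\mathcal{U}$ then $f^{-1}(\tilde V)\subseteq U$, while if $\tilde V\subseteq W$ then $f^{-1}(\tilde V)=\emptyset$ and is discarded.

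For every $x\in X$ we have
\[
\ord(\mathcal{V},x)=-1+\sum_{\tilde V\in\tilde{\mathcal{V}}}\mathbf{1}_{f^{-1}(\tilde V)}(x)
=-1+\sum_{\tilde V\in\tilde{\mathcal{V}}}\mathbf{1}_{\tilde V}(f(x))=\ord(\tilde{\mathcal{V}},f(x)).
\]
Integrating against any $\mu\in P$ and using the change-of-variables formula together with $f_*\mu\in Q$:
\[
\int_X\ord(\mathcal{V},x)\,d\mu(x)=\int_Y\ord(\tilde{\mathcal{V}},y)\,d(f_*\mu)(y)\le \ord(\tilde{\mathcal{V}},Q).
\]
Taking the supremum over $\mu\in P$ yields $\ord(\mathcal{V},P)\le\ord(\tilde{\mathcal{V}},Q)$, and since $\mathcal{V}\succeq\mathcal{U}$,
\[
\dim(\mathcal{U},P)\le\ord(\mathcal{V},P)\le\ord(\tilde{\mathcal{V}},Q).
\]
Now take the infimum over $\tilde{\mathcal{V}}\succeq\tilde{\mathcal{U}}$ to get $\dim(\mathcal{U},P)\le\dim(\tilde{\mathcal{U}},Q)\le\dim(Y,Q)$, then take the supremum over $\mathcal{U}\in\Cov(X)$ to conclude $\dim(X,P)\le\dim(Y,Q)$. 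The isomorphism statement follows by applying the inequality both to $f$ and to $f^{-1}$.

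No step is genuinely difficult; the only mild subtlety is ensuring the adjoined piece $W=Y\setminus f(X)$ is open (used to make $\tilde{\mathcal{U}}$ a legitimate open cover of $Y$), which is exactly where compactness of $X$ enters.
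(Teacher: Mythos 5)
Your argument is correct and follows essentially the same route as the paper: extend the cover $\mathcal{U}$ of $X$ to a cover of $Y$ by adjoining the open complement $Y\setminus f(X)$, pull back refining covers of $Y$ through $f$, and compare orders pointwise before integrating against $\mu\in P$ and using $f_*\mu\in Q$. One small overclaim: the chain $\ord(\mathcal{V},x)=-1+\sum_{\tilde V}\mathbf{1}_{f^{-1}(\tilde V)}(x)$ need not be an equality, since distinct $\tilde V_1\ne\tilde V_2$ can have $f^{-1}(\tilde V_1)=f^{-1}(\tilde V_2)\ne\emptyset$ (any two open sets of $Y$ meeting $f(X)$ in the same trace), and after discarding duplicates in the set $\mathcal{V}$ the left-hand side only satisfies $\ord(\mathcal{V},x)\le\ord(\tilde{\mathcal{V}},f(x))$ — which is exactly the inequality the paper states and is all that is needed for the conclusion.
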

\begin{proof}
Suppose that $f:X \to Y$ is an embedding of $(X,P)$ into $(Y,Q)$. To prove that $\dim(X,P) \le \dim(Y,Q)$ we need to show that for any open cover $\mathcal{U} \in \Cov(X)$ there exists an open cover $\mathcal{U}_0 \in \Cov(Y)$ such that for any $\dim(\mathcal{U},P) \le \dim(\mathcal{U}_0,Q)$. 

Choose any  $\mathcal{U} \in \Cov(X)$.  Since $f:X \to Y$ is a an embedding, the image of  every $U \in \mathcal{U}$ is relatively open in $f(X)$, so there exists an open set $W_U \subseteq Y$ such that $f(U) = W_U \cap X$.  Choose one open  set $W_U \subseteq Y$ as above  for every $U \in \mathcal{U}$ and let $\mathcal{U}_0 \in \Cov(X)$ be the open cover of $Y$ that consisting of the collection $\{W_U~:~ U \in \mathcal{U}\}$  together with the open set $Y \setminus f(X)$. 
We will show that  $\dim(\mathcal{U},X) \le \dim(\mathcal{U}_0,Y)$ by showing that for any $\mathcal{V}_0 \in \Cov(Y)$ such that $\mathcal{V}_0 \succeq \mathcal{U}_0$ there exists $\mathcal{V} \in \Cov(X)$ such that $\mathcal{V} \succeq \mathcal{U}$ and $\ord(\mathcal{V},P) \le \ord(\mathcal{V}_0,Q)$. Indeed given $\mathcal{V}_0 \in \Cov(Y)$ such that  $\mathcal{V}_0 \succeq \mathcal{U}_0$ let $\mathcal{V} = \{ f^{-1}(V)~:~ V \in \mathcal{V}_0\}$. Then by construction of $\mathcal{U}_0$ we have that  $\mathcal{V} \succeq \mathcal{U}$ . Since $f:X \to Y$ is an embedding, $\ord(\mathcal{V},x) \le \ord(\mathcal{V}_0,f(x))$ for every $x \in X$, and so 
\[
\sup_{ \mu \in P}\int_X \ord(\mathcal{V},x)d\mu(x)\le \sup_{\mu \in P}\int \ord(\mathcal{V}_0,y) d(f_*\mu)(y)
\]
Since $f_* \mu \in Q$ for every $\mu \in P$ we conclude that $\ord(\mathcal{V},P) \le \ord(\mathcal{V}_0,Q)$.

\end{proof}

\section{Dynamical systems and their associated topological-measure pairs}\label{sec:dynamical_systems}
Throughout this paper by a \emph{dynamical system} we will mean a pair $(X,T)$ where $X$ is a compact metric space and $T$ is a homomorphism from a countable, discrete group $\Gamma$ into the group  $\homeo(X)$ of self-homeomorphisms of $X$. 
 In this situation we will say that $\Gamma$ is the acting group of $(X,T)$ and 
 that $(X,T)$ is a $\Gamma$-dynamical system. Given a $\Gamma$-dynamical system $(X,T)$ and $\gamma \in \Gamma$ we denote by  $T_\gamma \in \homeo(X)$ the homeomorphism corresponding to the image of $\gamma$ under $T$. 
In the category of $\Gamma$-dynamical systems, the morphisms are continuous, $\Gamma$-equivariant maps.

We will denote by $\Prob(X,T)$ the simplex of $T$-invariant Borel probability measures on $X$.

There is  a natural functor from the category of $\Gamma$-dynamical systems to the category of topological-measure pairs sending a $\Gamma$-dynamical system $(X,T)$  to the pair $(X,\Prob(X,T))$. 
In particular, the pair $(X,\Prob(X))$ is an invariant of topological conjugacy \footnote{In fact, the pair $(X,\Prob(X,T))$ is an invariant of topological orbit equivalence.}.

%For a dynamical system $(X,T)$ we define the dimension of $(X,T)$ to be $\dim(X,T)= \dim(X,\Prob(X,T))$.

In the rest of the paper all the topological-measure pairs will be those that arise from a dynamical system.
Given a dynamical system $(X,T)$ and an open cover $\mathcal{U} \in \Cov(X)$,we use the following abbreviations:
\begin{itemize}
    \item $\ord(\mathcal{U},T):=\ord(\mathcal{U},\Prob(X,T))$
    \item $\dim(\mathcal{U},T):=\dim(\mathcal{U},\Prob(X,T))$
    \item $\dim(X,T):=\dim(X,\Prob(X,T))$
\end{itemize}

\section{The small boundary property and dimension zero for dynamical systems}\label{sec:SBP}
In this section we prove that a dynamical system $(X,T)$ has the small boundary property if and only if $\dim(X,T)=0$.
The small boundary property,  originates in the work of Shub and Weiss  \cite{MR1125888} that dealt with the following natural question: When can one lower the topological entropy of a dynamical system by taking (continuous) factors? The small boundary property is a dynamical analog of the topological property of being totally disconnected. 

Let $(X,T)$ be a dynamical system.
The capacity of a Borel set $A \subseteq X$ is given by
\[
\capacity(A,T) =  \sup_{\mu \in P(X,T)} \mu(A).
\]
Direct unraveling of the definitions shows that the notion of capacity of a set can be expressed using the notion of  order introduced in the previous section as follows:
\[
\capacity(A,T) = \ord(\{A\},T) +1.
\]
If $\capacity(A,T)=0$ for $A\subset X$, we say that is  \emph{$T$-small}.
Note that if $T$ is the trivial action on a compact set $X$, then 
\[
\capacity(A,T)= \begin{cases}
    1 & A \ne \emptyset\\
    0 & A = \emptyset.
\end{cases}
\]
In particular for a Borel set $A \subseteq X$ is small with respect to the trivial action if and only if it is empty.
More generally, if all the orbits of $T$ are finite then $A = \emptyset$ is the only $T$-small set.

A dynamical system $(X,T)$ has the \emph{small boundary property (SBP)} if there is a basis for the topology of $X$ that consists of sets whose boundary is $T$-small. Explicitly: $(X,T)$ has the small boundary property if for every open set $U$ and any $x \in U$ there is an open  neighborhood $V$ of $x$ such that $\capacity(\partial V,T)=0$ and $V \subseteq U$.

Elliott and Niu observed that the small boundary property is actually associated to a topological measure pair \cite{elliott2025smallboundarypropertymathcal}.  

Given a set $K \subseteq X$ and $\delta >0$ let
\[
B_\delta(K)= \left\{ x \in X~:~ \inf_{y \in K}\rho(x,y) < \delta \right\}.
\]

The following  result about the capacity  appears in \cite[Lemma 6.3]{MR1793417}:
\begin{lemma}\label{lem:cap_continuous}
    Let $(X,T)$ be a dynamical system and let $K \subseteq X$ be a closed subset. Then
    \[
    \capacity(K,T)= \lim_{\delta \to 0}\capacity(B_\delta(K),T).
    \]
\end{lemma}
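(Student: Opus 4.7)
The plan is to prove the two inequalities separately. The trivial direction is that $\capacity(K,T)\le\capacity(B_\delta(K),T)$ for every $\delta>0$, since $K\subseteq B_\delta(K)$ and capacity is monotone in its first argument. Since $\delta\mapsto\capacity(B_\delta(K),T)$ is monotone non-decreasing in $\delta$, the limit as $\delta\to 0$ exists and is at least $\capacity(K,T)$.

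For the reverse inequality, I would exploit compactness of $\Prob(X,T)$ in the weak-$*$ topology together with the fact that $K$ is closed. Fix a sequence $\delta_n\downarrow 0$ and, for each $n$, choose an invariant measure $\mu_n\in\Prob(X,T)$ with
\[
\mu_n(B_{\delta_n}(K))\ge\capacity(B_{\delta_n}(K),T)-\tfrac{1}{n}.
\]
By weak-$*$ compactness of $\Prob(X,T)$, after passing to a subsequence I may assume $\mu_n\to\mu$ weak-$*$ for some $\mu\in\Prob(X,T)$ (invariance passes to the limit, so $\mu\in\Prob(X,T)$).

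The key step is then to upgrade pointwise information about $\mu_n(B_{\delta_n}(K))$ to an upper bound in terms of $\mu(K)$. For any fixed $\eta>0$, once $\delta_n<\eta$ we have $B_{\delta_n}(K)\subseteq\overline{B_\eta(K)}$, and $\overline{B_\eta(K)}$ is closed, so by the Portmanteau theorem (upper semicontinuity of measure on closed sets under weak-$*$ convergence),
\[
\limsup_{n\to\infty}\mu_n(B_{\delta_n}(K))\le\limsup_{n\to\infty}\mu_n(\overline{B_\eta(K)})\le\mu(\overline{B_\eta(K)}).
\]
Since $K$ is closed, $\bigcap_{\eta>0}\overline{B_\eta(K)}=K$, so by continuity of $\mu$ from above along the decreasing family $\{\overline{B_\eta(K)}\}_{\eta>0}$, letting $\eta\to 0$ gives $\mu(\overline{B_\eta(K)})\downarrow\mu(K)$. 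Combining,
\[
\lim_{n\to\infty}\capacity(B_{\delta_n}(K),T)=\limsup_{n\to\infty}\mu_n(B_{\delta_n}(K))\le\mu(K)\le\capacity(K,T),
\]
which completes the proof.

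The only subtlety I expect is the standard care needed with the Portmanteau theorem, specifically using the closed set $\overline{B_\eta(K)}$ (not the open set $B_\eta(K)$) to get the correct semicontinuity direction, and the observation that $\bigcap_{\eta>0}\overline{B_\eta(K)}=K$ relies on $K$ being closed. Everything else is routine manipulation of the definitions and the fact that a monotone limit of $\capacity(B_\delta(K),T)$ in $\delta$ exists.
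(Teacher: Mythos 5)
Your proof is correct and takes essentially the same approach as the paper: both extract a weak-$*$ convergent subsequence of near-optimal invariant measures and then exploit upper semicontinuity of $\nu\mapsto\nu(F)$ for closed $F$ under weak-$*$ convergence. The only difference is packaging --- you invoke the Portmanteau theorem plus continuity of $\mu$ from above along $\overline{B_\eta(K)}\downarrow K$, whereas the paper re-derives the needed semicontinuity by integrating a pointwise decreasing sequence of continuous approximants of $\mathbf{1}_K$.
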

\begin{proof}
    Since $A \mapsto \capacity(A,T)$ is monotone non-decreasing,
    any Borel set $K \subseteq X$ we have 
    \[
    \capacity(K,T) \le \lim_{\delta \to 0}\capacity(B_\delta(K),T),
    \]
    and the limit on the right hand side exists due to monotonicity.
     To show reverse inequality, suppose that
    \[
    \lim_{\delta \to 0}\capacity(B_\delta(K),T) > t.
    \]
    So for every $n \in \N$ we can find a $\mu_n \in \Prob(X,T)$ so that $\mu_n(B_{\frac{1}{n}}(K))>t$. By compactness of $\Prob(X,T)$, after possibly replacing $(\mu_n)_{n=1}^\infty$ by a subsequence, we can assume that $(\mu_n)_{n=1}^\infty$ converges to $\mu \in \Prob(X,T)$.
    Let $g_n:X \to [0,1]$ ,$n \in \N$ be a sequence of continuous functions so that $(g_n(x))_{n \in \N}$ is monotone decreasing for every $x \in X$ and so that $\lim_{n\to \infty} g_n(x) =1_K(x)$.
    For any $\epsilon >0$ there exists $n \in \N$ such that $g_n(x) \ge 1-\epsilon$  for all $x \in B_{\frac{1}{n}}(K)$. It follows that $\int g_n(x) d\mu_m(x) \ge t(1-\epsilon)$ for all $m \ge n$.
    Hence $\int g_n(x) d\mu \ge t(1-\epsilon)$ for all $n \in \N$. By monotone convergence $\mu(A) = \int 1_A(x) d\mu(x) \ge t(1-\epsilon)$. Since this holds for any $\epsilon>0$, we conclude that $\mu(A) \ge t$, so $\capacity(A,T) \ge t$.
    This completes the proof of the reverse inequality.
\end{proof}

\begin{lemma}\label{lem:dim_zero_small_boundary_cover}
Suppose that $\dim(X,T)=0$ then  for any $\epsilon>0$ there exists a finite open cover $\mathcal{U}$ of $X$ with $\mesh(\mathcal{U})< \epsilon$ and $\capacity(\bigcup_{U \in \mathcal{U}} \partial U,T) < \epsilon$.
\end{lemma}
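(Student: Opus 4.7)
The plan is to take $\mathcal{U}$ to be a Munkres-shrinking of an open refinement of a fine cover that has small $\ord$, and then exploit the pointwise identity $\mathbf{1}_{\partial V}=\mathbf{1}_{\overline{V}}-\mathbf{1}_V$ valid for any open $V$.

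First I choose any open cover $\mathcal{U}_0$ of $X$ with $\mesh(\mathcal{U}_0)<\epsilon$. Because $\dim(X,T)=0$, in particular $\dim(\mathcal{U}_0,T)=0$, so by the definition \eqref{eq:dim_P_def} there exists an open refinement $\mathcal{V}_0=\{V_{0,1},\ldots,V_{0,\ell}\}\succeq \mathcal{U}_0$ with $\ord(\mathcal{V}_0,T)<\epsilon$. Applying \Cref{lem:smaller_cover_munkres} to $\mathcal{V}_0$ produces an open cover $\mathcal{V}_1=\{V_{1,1},\ldots,V_{1,\ell}\}$ with $\overline{V_{1,j}}\subseteq V_{0,j}$ for every $j$. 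I will take $\mathcal{U}:=\mathcal{V}_1$.

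The mesh bound is automatic: $V_{1,j}\subseteq V_{0,j}$ and $\mathcal{V}_0\succeq \mathcal{U}_0$, so $\mesh(\mathcal{V}_1)\le \mesh(\mathcal{U}_0)<\epsilon$. For the boundary capacity, I use that $V$ and $\partial V=\overline{V}\setminus V$ are disjoint with union $\overline{V}$ when $V$ is open, giving $\mathbf{1}_{\partial V}=\mathbf{1}_{\overline{V}}-\mathbf{1}_V$. Summing over $V\in\mathcal{V}_1$,
\[
\sum_{V\in\mathcal{V}_1}\mathbf{1}_{\partial V}(x) = \ord(\overline{\mathcal{V}_1},x)-\ord(\mathcal{V}_1,x)\le \ord(\overline{\mathcal{V}_1},x),
\]
where the last inequality uses $\ord(\mathcal{V}_1,\cdot)\ge 0$ because $\mathcal{V}_1$ is a cover. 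The inclusion $\overline{V_{1,j}}\subseteq V_{0,j}$ gives the pointwise domination $\ord(\overline{\mathcal{V}_1},x)\le \ord(\mathcal{V}_0,x)$. Hence for any $\mu\in\Prob(X,T)$,
\[
\mu\Bigl(\bigcup_{V\in\mathcal{V}_1}\partial V\Bigr)\le \sum_{V}\mu(\partial V)\le \int \ord(\mathcal{V}_0,x)\,d\mu\le \ord(\mathcal{V}_0,T)<\epsilon,
\]
and taking $\sup_{\mu}$ yields $\capacity\bigl(\bigcup_{V\in\mathcal{V}_1}\partial V,T\bigr)<\epsilon$, as desired.

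I do not expect a real obstacle: the proof is essentially a clean combination of \Cref{lem:smaller_cover_munkres} with the definition of $\dim(X,T)=0$. The one mildly nontrivial point is to realize that one should not try to make each individual $\capacity(\partial V,T)$ small (which would require choosing $\mathcal{V}_1$ much more carefully), but rather control the total $\capacity(\bigcup_V \partial V,T)$ via the closed-cover order $\ord(\overline{\mathcal{V}_1},T)$, which the Munkres shrinking bounds by the open-cover order $\ord(\mathcal{V}_0,T)$ that we already made small.
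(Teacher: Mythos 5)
Your proof is correct, and it follows essentially the same route as the paper: both take a fine cover $\mathcal{V}_0$ with $\ord(\mathcal{V}_0,T)<\epsilon$, Munkres-shrink it, note that the boundaries of the shrunken cover sit inside the overlap region of $\mathcal{V}_0$, and bound the capacity of that region by $\int\ord(\mathcal{V}_0,\cdot)\,d\mu$. The only cosmetic difference is the bookkeeping at the end: the paper passes through the set $B=\{x:\ord(\mathcal{V}_0,x)\ge 1\}$ and a Markov-type estimate, while you sum the pointwise identity $\mathbf{1}_{\partial V}=\mathbf{1}_{\overline{V}}-\mathbf{1}_V$ directly; both estimates are the same inequality in slight disguise.
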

\begin{proof}
    Let $\epsilon>0$ be given. Because $\dim(X,T)=0$ for every $\epsilon >0$ there exists an open cover $\mathcal{U}'=\{U'_1,\ldots,U'_\ell\}$ with $\mesh(\mathcal{U}')< \epsilon$ such that 
\[
\sup_{\mu \in \Prob(X,T)}\int_X \ord(\mathcal{U}',x) d\mu(x) < \epsilon ~\forall x \in X.
\]
Let 
\[B=\left\{ x \in X~:~ \ord(\mathcal{U}',x) \ge 1\right\}  \]
It follows that 
$\capacity\left( B,T\right) < \epsilon$.
By \Cref{lem:smaller_cover_munkres} we can find open sets $U_1,\ldots, U_\ell$ so that $\overline{U_j} \subseteq U'_j$ and so that $\mathcal{U}=\left\{U_1,\ldots,U_\ell\right\}$ is still a cover of $X$. 
Suppose that $x \in \partial U_j$  for some $1 \le j \le \ell$. Then $x \not \in U_j$, because $U_j$ is open, but $x \in U'_j$. Since $\{U_1,\ldots,U_\ell\}$ is a cover, there exist $k \in \{1,\ldots,\ell\} \setminus \{j\}$ such that $x \in U_k$. So $x \in U'_j \cap U'_k$ so $\ord(\mathcal{U}',x) \ge 1$.
We conclude that  $\bigcup_{j=1}^\ell \partial U_j \subseteq B$. 
It follows that for any $\mu \in \Prob(X)$ 

\[
\capacity(\bigcup_{j=1}^\ell \partial U_j ) \le \int_{\bigcup_{j=1}^\ell \partial U_j  } \ord(\mathcal{U}',x) d\mu(x) < \epsilon.
\]

This proves that  indeed $\capacity(\bigcup_{U \in \mathcal{U}} \partial U,T) < \epsilon$.

\end{proof}

\begin{theorem}\label{thm:SBP_iff_dim_0}
    A dynamical system $(X,T)$ has the small boundary property if and only if $\dim(X,T)=0$.
\end{theorem}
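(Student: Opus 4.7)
I handle the two implications separately, with the converse $\mathrm{SBP}\Rightarrow \dim(X,T)=0$ being significantly easier. Fix $\mathcal{U}\in\Cov(X)$; it suffices to show $\dim(\mathcal{U},T)=0$. By SBP together with compactness of $X$, I refine $\mathcal{U}$ by a finite open cover $\mathcal{V}=\{V_1,\dots,V_n\}$ in which each $V_j$ has $T$-small boundary, so that $S:=\bigcup_j \partial V_j$ is a closed, $T$-small set (finite subadditivity of capacity). Form the pairwise disjoint open sets $\widetilde V_j := V_j\setminus \overline{V_1\cup\cdots\cup V_{j-1}}$. For $y\in X$, letting $j(y)$ be the least index with $y\in V_{j(y)}$, one checks $y\in \widetilde V_{j(y)}$ unless $y$ lies in the boundary of $V_1\cup\cdots\cup V_{j(y)-1}\subseteq S$, so $\bigcup_j \widetilde V_j \supseteq X\setminus S$. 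Given $\epsilon>0$, \Cref{lem:cap_continuous} supplies $\delta>0$ with $\capacity(B_\delta(S),T)<\epsilon$. Define $W_j:=\widetilde V_j\cup (V_j\cap B_\delta(S))$; then $\mathcal{W}=\{W_1,\dots,W_n\}$ is an open cover of $X$ refining $\mathcal{U}$, and pointwise $\ord(\mathcal{W},y)\le (n-1)\mathbf{1}_{B_\delta(S)}(y)$ (for $y\notin B_\delta(S)$, disjointness of the $\widetilde V_j$ forces $y\in W_{j(y)}$ alone; for $y\in B_\delta(S)$ one merely uses the trivial bound $\ord(\mathcal{V},y)\le n-1$). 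Integrating against any $\mu\in\Prob(X,T)$ yields $\ord(\mathcal{W},T)\le (n-1)\epsilon$, and letting $\epsilon\downarrow 0$ gives $\dim(\mathcal{U},T)=0$.

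For the harder direction $\dim(X,T)=0\Rightarrow \mathrm{SBP}$, I must construct, for each $x\in X$ and each open $U\ni x$, an open set $V$ with $x\in V\subseteq U$ and $\capacity(\partial V,T)=0$. My plan is to fix a Urysohn function $\phi\colon X\to[0,1]$ with $\phi(x)=0$ and $\phi\equiv 1$ off $U$, so that $V_t:=\phi^{-1}([0,t))$ is an open neighborhood of $x$ inside $U$ whose boundary lies in the level set $\phi^{-1}(\{t\})$; the problem then reduces to finding a single $t\in(0,1)$ with $\capacity(\phi^{-1}(\{t\}),T)=0$. Iterating \Cref{lem:dim_zero_small_boundary_cover} yields open covers $\mathcal{U}_n$ with $\mesh(\mathcal{U}_n)\to 0$ and $\capacity\bigl(\bigcup_{U\in \mathcal{U}_n}\partial U, T\bigr)<2^{-n}$. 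Applying Borel--Cantelli fiberwise over each $\mu\in\Prob(X,T)$ shows that the set $A:=\limsup_n \bigcup_{U\in\mathcal{U}_n}\partial U$ is $T$-small. I would then exploit this ``universal'' lim-sup set $A$, combined with a Hausdorff-compactness extraction from the shrinking neighborhoods of $x$ supplied by the $\mathcal{U}_n$, to locate a level $t$ of $\phi$ whose level set is contained in $A$ up to a further $T$-small error, producing the desired $V=V_t$.

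The main obstacle is exactly this last step: \Cref{lem:dim_zero_small_boundary_cover} produces neighborhoods of arbitrarily small \emph{but nonzero} boundary capacity, whereas SBP demands a single neighborhood of strictly zero boundary capacity. Bridging this gap requires a delicate diagonal/Baire-category argument, in the spirit of the Baire-type tool promised in \Cref{sec:Baire_Category}. A natural route is to work in the Polish space $C(X,[0,1])$ and show, using \Cref{lem:cap_continuous} together with the zero-dimensionality hypothesis, that a generic Urysohn $\phi$ adapted to $(x,U)$ has $T$-small level sets at a comeager set of values $t\in(0,1)$; any such $\phi$ furnishes the required $V$. I expect this passage from ``arbitrarily small'' to ``zero'' to be the technical heart of the proof.
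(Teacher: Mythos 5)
Your proof of the forward implication (SBP implies $\dim(X,T)=0$) is correct and is essentially the paper's argument: disjointify a small-boundary cover, thicken by a $\delta$-neighborhood of the union $S$ of the boundaries, observe that the order vanishes off $B_\delta(S)$, and integrate against an invariant measure. The only cosmetic difference is that the paper thickens each piece by $B_\delta(\partial U''_j)$ while you thicken by $B_\delta(S)$; both work.

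The converse direction has a genuine gap, which you acknowledge, and your proposed strategy is not the one the paper uses. You fix a Urysohn function $\phi$ adapted to $(x,U)$ \emph{in advance} and then try to locate a level $t$ with $\capacity(\phi^{-1}(\{t\}),T)=0$. Two problems. First, there is no mechanism forcing any level set of the pre-chosen $\phi$ to lie in $A=\limsup_n\bigcup_{U\in\mathcal{U}_n}\partial U$, even up to a small-capacity error: the level sets of $\phi$ and the boundaries of the covers $\mathcal{U}_n$ are unrelated closed families, so the $T$-smallness of $A$ buys you nothing about $\phi$. Second, and more fundamentally, the ``for each $\mu$ all but countably many levels are $\mu$-null'' observation gives a good $t$ only for countably many measures at a time; when $\Prob(X,T)$ is uncountable the bad levels may exhaust $(0,1)$, so a fixed $\phi$ need not admit any zero-capacity level set at all. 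A Baire-category argument over $\phi\in C(X,[0,1])$ might in principle repair this, but you would have to prove it, and the paper does not take that route; \Cref{lem:separate} is used elsewhere, not here.

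What the paper actually does is an iterative construction, closer to your ``diagonal'' remark than to Baire category, in which the open set is \emph{built} to conform to the covers rather than selected from a fixed family. Starting from $W_0\ni x$ with $\overline{W_0}\subseteq V$, one sets $W_n=W_{n-1}\cup\bigcup_{y\in\partial W_{n-1}}U_y$ with $U_y\in\mathcal{U}_n$ a small cover element containing $y$; this forces $\partial W_n\subseteq\bigcup_{U\in\mathcal{U}_n}\partial U$, hence $\capacity(\partial W_n,T)<1/n$. Choosing $\epsilon_n\downarrow 0$ so that $B_{\epsilon_n}(\partial W_n)$ still has capacity $<1/n$ and the neighborhoods nest, every increment $W_m\setminus W_n$ for $m>n$ is trapped in $B_{\epsilon_n}(\partial W_n)$, so $\partial W\subseteq\bigcap_n B_{\epsilon_n}(\partial W_n)$ for $W=\bigcup_n W_n$, giving $\capacity(\partial W,T)=0$. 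The step your plan is missing is exactly this boundary-repair at each stage: you cannot commit to $\phi$ before you have seen the covers.
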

\begin{proof}

Suppose that $(X,T)$ has the small boundary property.
We need to prove that for every $\epsilon>0$ there exists an open cover $\mathcal{U}$ such that $\dim(\mathcal{U},T) < \epsilon$ and $\mesh(\mathcal{U}) < \epsilon$.
Let $\mathcal{U}'=\left\{U'_1,\ldots,U'_\ell\right\}$ be a finite open cover of $X$ with $\mesh(\mathcal{U}')< \epsilon/2$ by sets with zero capacity boundaries. Let $U_j'' = U'_j \setminus \bigcup_{ i < j} U'_j$, then the collection of sets  $\{U''_1,\ldots,U''_\ell\}$ are pairwise disjoint and cover $X$. Since the boundary of each $U''_j$ is contained in the union of the boundaries of the sets $U'_1,\ldots,U''_j$ we have that $\capacity (\partial U''_j) =0$ for every $1\le j \le \ell$. By \Cref{lem:cap_continuous}, for sufficiently small $\delta>0$ we have that 
\[
\capacity( \bigcup_{j=1}^\ell B_\delta(\partial U''_{j}),T) < \epsilon/\ell.
\]
Choose $0< \delta < \epsilon/2$ as above and let
\[K = \bigcup_{j=1}^\ell B_{\delta}(\partial U''_{j})).\]
For $1 \le j \le \ell$ let $U_j = U_j'' \cup B_{\delta}(\partial U''_j)$, and let $\mathcal{U}=\{U_1,\ldots,U_\ell\}$. 
It is clear that $\mesh(\mathcal{U}) \le \epsilon/2 + \delta < \epsilon$.

Since $\{U''_1,\ldots,U''_\ell\}$ are pairwise disjoint it follows that  if $x \in X$ and $\ord(\mathcal{U},x) >0$ then $x \in K$.
On the other hand, for any $x \in X$ we have $\ord(\mathcal{U},x)\le \ell$.
It follows that for any $\mu \in \Prob(X,T)$ we have
\[
\int_X \ord(\mathcal{U},x) d\mu(x)= \int_K \ord(\mathcal{U},x)d\mu(x) \le \ell \capacity(K,T) < \epsilon.
\]
We conclude that $\ord(\mathcal{U},T) < \epsilon$.

Now suppose that $\dim(X,T)=0$.
Let $V$ be an open set and  $x \in V$. We will show that there exists an open set $W \subseteq V$ with $x \in U$ such that $\capacity(\partial W,T)=0$.
To do this, we will construct by induction a sequence of positive numbers $(\epsilon_n)_{n=1}^\infty$ such that $\lim_{n \to \infty} \epsilon_n =0$ and a sequence of open sets 
\[x \in W_0 \subseteq W_1 \subseteq \ldots \subseteq W_n \subseteq \ldots\]
So that for every $n \in \mathbb{N}$ the following hold:
\begin{enumerate}
    \item $\overline{W_{n-1}} \subseteq W_n \subseteq B_{\epsilon_{n-1}}(\overline{W_{n-1}})$.
    \item $B_{\epsilon_n}(W_n) \subseteq V$
    \item $\capacity(B_{\epsilon_n}(\partial W_n),T) < \frac{1}{n}$.
    \item $B_{\epsilon_n}(\partial W_n) \subseteq B_{\epsilon_{n-1}}(\partial W_{n-1})$.
\end{enumerate}
Then $W = \bigcup_{n=1}^\infty W_n$ will be an open set such that $ x \in W$ and $W \subseteq V$ and $\capacity(W,T)=0$.
To start the induction, let $W_0$ be an open set with $x \in X$ so that $\overline{W_0} \subseteq V$ and choose $\epsilon_0>0$ smaller than the distance between $\overline{W_0}$ and $X \setminus V$.
Now suppose that $W_0,\ldots,W_{n-1}$ and $\epsilon_0,\ldots,\epsilon_{n-1}$ have been constructed.

By \Cref{lem:dim_zero_small_boundary_cover} we can find  an open cover $\mathcal{U}_n$  of $X$ such that $\mesh(\mathcal{U}_n) < \epsilon_{n-1}$ and $\capacity(\bigcup_{U \in \mathcal{U}_n,T} \partial U) < \frac{1}{n}$. Thus there exists $\epsilon_n>0$ sufficiently  small so that $\capacity(B_{\epsilon_n}(\bigcup_{U \in \mathcal{U}_n} \partial U),T) < \frac{1}{n}$. Furthermore, choose $\epsilon_n< \epsilon_{n-1}$. For every $x \in \partial W_{n-1}$ choose an element $U_x \in \mathcal{U}_n$  that contains $x$. Let $W_n= W_{n-1} \cup \bigcup_{x \in \partial W_{n-1}} U_x$. Then 
$ \partial W_{n-1} \subseteq \bigcup_{U \in \mathcal{U}_n}\partial U$, so $\capacity(B_{\epsilon_n}(\partial W_n),T) < \frac{1}{n}$. This completes the induction step, and so the proof of \Cref{thm:SBP_iff_dim_0} is complete.
\end{proof}

Lindenstrauss and Weiss proved that for any action of an amenable group the small boundary property implies zero mean dimension \cite[Theorem 5.4]{MR1749670}. As we prove in \Cref{thm:mdim_le_dim}, the mean dimension of $(X,T)$ is always bounded from above by $\dim(X,T)$.
It was 
shown by  Lindenstrauss that 
a $\mathbb{Z}$-dynamical system $(X,T)$ that admits a free minimal factor
has zero mean dimension if and only if it has the small boundary property \cite[Theorem 6.2]{MR1793417}. As we shall see in \Cref{sec:dim_amenable}, for such $(X,T)$  we always have $\mdim(X,T)=\dim(X,T)$. So  \Cref{thm:SBP_iff_dim_0} can be viewed as a refinement of  \cite[Theorem 6.2]{MR1793417} and \cite[Theorem 5.4]{MR1749670}.

\section{Separating closed subsets and a Baire category lemma}
\label{sec:Baire_Category}

 The  Baire category approach has been used repeatedly in classical dimension theory, and has also been instrumental in the  proving fundamental results about mean dimension. 
 In this section we formulate a simple auxiliary lemma that encapsulates a Baire category argument  used later in this paper.
 The statement of the lemma and its short  proof are presented  marely for completeness as the argument is quite standard. We do not claim novelty for the approach presented in this section.

Let $X$ be a compact metric space and $\mathcal{C} \subseteq 2^X$ a collection of pairwise disjoint subsets of $X$. We say that a function $f:X \to Y$ \emph{separates} $\mathcal{C}$ if $f(C_1) \cap f(C_2)=\emptyset$ for  every pair of distinct elements $C_1,C_2 \in \mathcal{C}$.

\begin{lemma}\label{lem:separate}
    Let $X$ be a compact metric space and $d \in \mathbb{N}$. For every $1\le j \le d$ and $n \in\ \N$ let  $\mathcal{C}_{j,n}$ be a collection of  pairwise disjoint closed subsets of $X$ so that  $\lim_{n \to \infty}\mesh(\mathcal{C}_{j,n})=0$ for every $1\le j \le d$.
    For every $n \in \mathbb{N}$ let
    \[
    G _n = \left\{ f = (f_1,\ldots,f_d) \in C(X,[0,1]^d)~:~ f_j \mbox{ separates } \mathcal{C}_{j,n} ~\forall 1\le j \le d  \right\}
    \]
    Then the set 
    \[
    G= \bigcap_{N=1}^\infty\bigcup_{n=N}^\infty G_n
    \]
    is a dense $G_\delta$ subset of $C(X,[0,1]^d)$.
    Moreover, let $A \subset [0,1]$ be a finite set and for every $n \in \N$ let 
    \[
    \tilde G_n = \left\{ 
    (f_1,\ldots,f_d) \in G_n ~:~ f_j(x) \not \in A ~\forall x \in \bigcup_{C \in \mathcal{C}_{j,n}}C,~ 1\le j \le d 
    \right\}.
    \]
    Then the set 
    \[
    \tilde G= \bigcap_{N=1}^\infty\bigcup_{n=N}^\infty \tilde G_n
    \]
    is also a dense $G_\delta$ subset of $C(X,[0,1]^d)$.
\end{lemma}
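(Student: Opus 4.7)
The plan is to show that each $G_n$ is open in $C(X,[0,1]^d)$ and that each union $\bigcup_{n\ge N}G_n$ is dense, so that $G=\bigcap_N\bigcup_{n\ge N}G_n$ is a dense $G_\delta$ by the Baire category theorem applied to the complete metric space $C(X,[0,1]^d)$. The same outline handles $\tilde G$; the only extra input there is the avoidance of the finite set $A$, which I treat at the end. Throughout I tacitly assume, as in \Cref{lem:ostrand_kolmogorov_covers}, that each $\mathcal{C}_{j,n}$ is a \emph{finite} family of pairwise disjoint closed sets; this is the setting in which the lemma will be applied.

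For openness I would observe that when $f\in G_n$ the finitely many compact sets $\{f_j(C):C\in\mathcal{C}_{j,n}\}$ are pairwise disjoint and hence lie at positive minimum distance from one another, so any $g$ that is $\|\cdot\|_\infty$-close to $f$ still separates them by the triangle inequality. For $\tilde G_n$ one additionally uses that $\bigcup_{C\in\mathcal{C}_{j,n}} f_j(C)$ is compact and disjoint from the finite set $A$, hence at positive distance from $A$, and the same perturbation inequality handles this extra condition.

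For density, given $f\in C(X,[0,1]^d)$ and $\epsilon>0$, I would use uniform continuity of each $f_j$ together with $\mesh(\mathcal{C}_{j,n})\to 0$ to pick $n\ge N$ so large that every $f_j(C)$ with $C\in\mathcal{C}_{j,n}$ has diameter less than $\epsilon/3$. Letting $m_{j,C}$ be the midpoint of $f_j(C)$, I would then choose pairwise disjoint open neighborhoods $U_{j,C}$ of the $C$'s each contained in $f_j^{-1}\bigl((m_{j,C}-\epsilon/2,\,m_{j,C}+\epsilon/2)\bigr)$, Urysohn functions $\phi_{j,C}$ equal to $1$ on $C$ and $0$ off $U_{j,C}$, and pairwise distinct constants $c_{j,C}\in[m_{j,C}-\epsilon/3,\,m_{j,C}+\epsilon/3]\cap[0,1]$, and set
\[
g_j(x):=(1-\phi_{j,C}(x))f_j(x)+c_{j,C}\phi_{j,C}(x) \text{ on each }U_{j,C}, \qquad g_j(x):=f_j(x) \text{ elsewhere.}
\]
On each $C$ one then has $g_j\equiv c_{j,C}$, so $g_j$ sends distinct elements of $\mathcal{C}_{j,n}$ to distinct singletons, and the triangle inequality together with $f_j(U_{j,C})\subseteq(m_{j,C}-\epsilon/2,m_{j,C}+\epsilon/2)$ gives $\|g_j-f_j\|_\infty\le\epsilon/3+\epsilon/2<\epsilon$. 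For the tilde version one additionally requires $c_{j,C}\notin A$, which is still possible since $A$ is finite.

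The main (essentially only) obstacle is the simultaneous choice of the constants $c_{j,C}$: they must lie close to $f_j$'s values on each $C$, be pairwise distinct as $C$ varies, and (in the tilde version) avoid the finite set $A$. All three constraints are codimension-one conditions in the product $\prod_C\bigl([m_{j,C}-\epsilon/3,\,m_{j,C}+\epsilon/3]\cap[0,1]\bigr)$, so a generic choice succeeds. This is the one place the finiteness of each $\mathcal{C}_{j,n}$ is really used; without it the pairwise distances $\mathrm{dist}(f_j(C),f_j(C'))$ could accumulate to zero and $G_n$ could fail to be open.
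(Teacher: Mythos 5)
Your proof is correct and follows essentially the same route as the paper's: Baire category in the complete metric space $C(X,[0,1]^d)$, openness of each $G_n$ from the positive minimum separation of the finitely many pairwise disjoint compacts $f_j(C)$, and density by perturbing $f_j$ to pairwise distinct constants near its original values on the sets of $\mathcal{C}_{j,n}$ for a suitably large $n$. The only cosmetic difference is that you build the perturbation with Urysohn functions and a convex interpolation on disjoint neighborhoods, whereas the paper defines a piecewise-constant function on $\bigcup_{C\in\mathcal{C}_{j,n}}C$ and extends it via the Tietze theorem; your observation that the lemma tacitly requires each $\mathcal{C}_{j,n}$ to be finite is correct and matches the paper's implicit use of that hypothesis in the openness step.
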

\begin{proof}
    Since $C(X,[0,1]^d)$ is a complete metric space, by the Baire category theorem  a countable intersection of dense open sets is also dense, so it suffices  to show that for every $N \in \mathbb{N}$ the set 
    \[
    D_N = \bigcup_{n=N}^\infty G_n
    \]
    is dense and open.
    Because   $\mathcal{C}_{j,n}$ is a finite collection of pairwise disjoint closed (hence compact) pairwise disjoint subsets of $X$, it follows that for any  $f \in C(X,[0,1])$ that separates $\mathcal{C}_{j,n}$ there exist $\epsilon>0$ such that $|f(x_1)-f(x_2)| > \epsilon$ for any $x_1,x_2 \in X$ that belong to distinct elements of $\mathcal{C}_{j,n}$. So any $\|g-f\|_\infty  < \frac{\epsilon}{2}$ also separates $\mathcal{C}_{j,n}$. This shows that set $G_n$ is open for every $n \in \N$, hence $D_N$ is also open for every $N \in \N$. 
    Let us check  that $D_N$ is dense in $C(X,[0,1]^d)$. Let $f =(f_1,\ldots,f_d) \in C(X,[0,1]^d)$ be an arbitrary continuous function and $\epsilon >0$. Since $X$ is compact there exists $\delta >0$ such that $f(C)$ has diameter less then $\epsilon/2$ for any set $C \subseteq X$  whose diameter is  less than $\delta>0$.  Since $\lim_{n \to \infty}\mesh(\mathcal{C}_{j,n})=0$ we can find $n \in \mathbb{N}$ such that $\mesh(\mathcal{C}_{j,n})< \delta/2$ for every $1
    \le j \le d$. For every $C \in \mathcal{C}_{j,n}$ choose $x_C \in C$ and $t_c \in [0,1]$ such that $|t_c-f_j(x_c)|< \delta/2$. Define a continuous function 
    \[
    \tilde g_j: \bigcup_{C \in \mathcal{C}_{j,n
}}C \to [0,1]
    \]
by
\[
\tilde g_j (x) = \sum_{C \in \mathcal{C}_{j,n}}t_C 1_{C}(x).
\]
Then because $\mesh(\mathcal{C}_{j,n}) < \delta$ it follows that 
\[
|\tilde g_j(x)- f_j(x)| < \epsilon ~ \forall x \in \bigcup_{C \in \mathcal{C}_{j,n}}C.
\]
By Tietze extension theorem we can extend $\tilde g$ to a continuous function $g \in C(X,[0,1]^d)$ such that $\|g_j-f_j\|_\infty < \epsilon$. Because the values $t_C$ are distinct, $g_j$ separates $\mathcal{C}_{j,n}$, hence $g=(g_1,\ldots,g_d) \in D_N$. 
This completes the proof that $D_N$ is a dense subset of $C(X,[0,1]^d)$, therefore the proof that $G$ is a dense $G_\delta$ subset of $C(X,[0,1]^d)$ is complete.
To prove that $\tilde G$ is also a dense $G_\delta$ subset, it suffices to show that for every $N \in \mathbb{N}$ set $\tilde D_N = \bigcup_{n=N}^\infty \tilde G_n$ is  dense and open. The only required modification is that we need to make sure that the elements $t_c \in [0,1]$ satisfying $|t_c f_j(x_c)| < \delta/2$ are not in the given finite set $A$. This is clearly possible.
\end{proof}

\section{Dimension for free actions of finite groups}\label{Sec:finite_groups}

In this section we compute the dimension of dynamical systems over a \emph{finite} group. To simplify the discussion and the statement of the result, in this section only we restrict to \emph{free} actions. 
Recall that a $\Gamma$-dynamical system is called \emph{free} if $T_\gamma(x) \ne T_{\gamma'}(x)$ for every $x \in X$ and every distinct $\gamma,\gamma' \in \Gamma$.

\begin{theorem}\label{thm:dim_finite_group_action}
Let $\Gamma$ be a finite group and let $(X,T)$ be a free $\Gamma$-dynamical system.
Then \[\dim(X,T)= \frac{1}{|\Gamma|}\dim(X).\]  
\end{theorem}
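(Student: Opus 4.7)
My plan is to reduce to classical Lebesgue covering dimension theory on the quotient $Y := X/\Gamma$, using that $T$-invariant probability measures disintegrate uniformly over fibers. Since $\Gamma$ is finite and acts freely, the orbit map $\pi: X \to Y$ is a $|\Gamma|$-to-one covering, so $\dim(Y) = \dim(X) = d$. Every $\mu \in \Prob(X,T)$ has the unique form $\mu = \int_Y \mu_y \, d\nu(y)$ with $\nu := \pi_*\mu$ and $\mu_y := \tfrac{1}{|\Gamma|}\sum_{\gamma \in \Gamma}\delta_{T_\gamma \tilde y}$ for any lift $\tilde y \in \pi^{-1}(y)$; conversely every $\nu \in \Prob(Y)$ arises this way. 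Integrating the nonnegative function $\ord(\mathcal{V},\cdot)$ and taking suprema yields
\[
\sup_{\mu \in \Prob(X,T)} \int_X \ord(\mathcal{V},x)\,d\mu(x) \;=\; \frac{1}{|\Gamma|}\sup_{y \in Y} \sum_{\gamma \in \Gamma}\ord(\mathcal{V}, T_\gamma \tilde y),
\]
so the theorem is equivalent to
\[
\sup_{\mathcal{U} \in \Cov(X)}\inf_{\mathcal{V} \succeq \mathcal{U}} \sup_{y \in Y} \sum_{\gamma \in \Gamma}\ord(\mathcal{V}, T_\gamma \tilde y) \;=\; d.
\]

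For the lower bound I would choose $\mathcal{U}_0 \in \Cov(Y)$ consisting of evenly-covered sets with $\dim(\mathcal{U}_0) = d$, and set $\mathcal{U} := \pi^{-1}(\mathcal{U}_0)$. Given any refinement $\mathcal{V} \succeq \mathcal{U}$, I would pass to a refinement $\mathcal{V}^*$ whose elements each lie in a single sheet: for each $V \in \mathcal{V}$ pick $U_0(V) \in \mathcal{U}_0$ with $V \subseteq \pi^{-1}(U_0(V))$ and intersect $V$ with the sheets of $\pi^{-1}(U_0(V))$. Since each point lies in exactly one sheet of each $\pi^{-1}(U_0)$ containing it, $\ord(\mathcal{V}^*,x) = \ord(\mathcal{V},x)$ pointwise. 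Working in a local trivialization $\pi^{-1}(U_0) \cong U_0 \times \Gamma$ near a fixed orbit, I would partition the elements of $\mathcal{V}^*$ meeting the orbit into $|\Gamma|$ subcollections indexed by sheet and project to $Y$, obtaining $|\Gamma|$ open covers $\mathcal{V}_0^{(\gamma)}$ (of a neighborhood of $y$) each refining $\mathcal{U}_0$. Because each element of $\mathcal{V}^*$ meets any orbit at most once, a direct count gives the identity $\sum_\gamma \ord(\mathcal{V}^*, T_\gamma \tilde y) = \sum_\gamma \ord(\mathcal{V}_0^{(\gamma)}, y)$. Since each $\mathcal{V}_0^{(\gamma)}$ refines $\mathcal{U}_0$, some $y_{\gamma_0}$ achieves $\ord(\mathcal{V}_0^{(\gamma_0)}, y_{\gamma_0}) \ge d$; evaluating the sum at this $y$ and using nonnegativity of the remaining summands yields the desired bound $\ge d$.

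For the upper bound, given any $\mathcal{U} \in \Cov(X)$, I would construct $\mathcal{V} \succeq \mathcal{U}$ so that the per-orbit order is bounded by $d$. After refining $\mathcal{U}$ using an evenly-covered atlas $\mathcal{W}_0$ of $Y$, on each sheet I would select a refinement $\mathcal{V}_0^{(\gamma)}$ of the corresponding base cover on $Y$ satisfying $\ord(\mathcal{V}_0^{(\gamma)}) \le d$, arranged so that the overlap regions $B_\gamma := \{y : \ord(\mathcal{V}_0^{(\gamma)}, y) > 0\}$ are pairwise disjoint in $Y$. Pulling these back and invoking the same identity as in the lower-bound argument then yields $\sum_\gamma \ord(\mathcal{V}, T_\gamma \tilde y) \le d$ at every $y$, since at most one summand is nonzero. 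The principal obstacle is the construction of these $|\Gamma|$ refinements with pairwise disjoint overlap regions. I would carry this step out by applying the Ostrand--Kolmogorov decomposition (\Cref{lem:ostrand_kolmogorov_covers}) with $k = |\Gamma|$ to a suitable refinement of the cover on $Y$, producing $|\Gamma|$ families of pairwise disjoint closed sets, and then using the shrinking lemma (\Cref{lem:smaller_cover_munkres}) to thicken each family into an open refinement whose excess multiplicity is confined to a prescribed open region disjoint from those of the other refinements.
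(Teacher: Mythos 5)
Your quotient-space reduction is sound: since $\Gamma$ is finite and acts freely, $\pi:X\to Y:=X/\Gamma$ is a $|\Gamma|$-sheeted covering with $\dim(Y)=\dim(X)$, and the disintegration of invariant measures over orbits gives the stated identity between $\sup_\mu\int\ord(\mathcal V,\cdot)\,d\mu$ and $\frac{1}{|\Gamma|}\sup_y\sum_\gamma\ord(\mathcal V,T_\gamma\tilde y)$. This matches the paper's starting point. The gaps are in the two main bounds.

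\textbf{Lower bound.} Your argument has a genuine logical error. The collections $\mathcal V_0^{(\gamma)}$ are, by construction, the projections of the elements of $\mathcal V^*$ that \emph{contain $T_\gamma\tilde y$ for your fixed $y$}; they are finite families of open sets all containing that one point $y$, covering only a neighborhood of $y$. They are not covers of $Y$, so $\dim(\mathcal U_0)=d$ does not entitle you to assert $\ord(\mathcal V_0^{(\gamma_0)})\ge d$. Even if some such global bound held, the witness $y_{\gamma_0}$ would be a \emph{different} point, and the identity $\sum_\gamma\ord(\mathcal V^*,T_\gamma\tilde y)=\sum_\gamma\ord(\mathcal V_0^{(\gamma)},y)$ was derived only at the fixed $y$; nothing connects the two. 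The paper's lower bound avoids all of this and is essentially a one-liner: since $\dim(\mathcal U)=d$, any refinement $\mathcal V$ has a point $x_0$ with $\ord(\mathcal V,x_0)\ge d$, and then $\sum_\gamma\ord(\mathcal V,T_\gamma x_0)\ge\ord(\mathcal V,x_0)\ge d$ because each summand is $\ge 0$ (as $\mathcal V$ is a cover). Averaging the orbit of $x_0$ gives the invariant measure with integral $\ge d/|\Gamma|$. You do not need to disassemble the orbit sum at all; nonnegativity of the remaining terms already gives the bound from a single point.

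\textbf{Upper bound.} The mechanism you propose — $|\Gamma|$ covers $\mathcal V_0^{(\gamma)}$ of $Y$, each of order $\le d$, with \emph{pairwise disjoint} overlap regions $B_\gamma$ — is both stronger than what you need and not what the Ostrand--Kolmogorov lemma plus shrinking delivers. \Cref{lem:ostrand_kolmogorov_covers} produces $|\Gamma|$ families of pairwise \emph{disjoint} closed sets with a lower bound on $\sum_j\ord(\mathcal C_j,y)$; it says nothing about where overlaps land after you thicken these into covers, and \Cref{lem:smaller_cover_munkres} offers no control over that. Moreover, in general one cannot force the $B_\gamma$ to be disjoint when $\dim(Y)\ge 2$. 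What you actually need is the weaker pointwise inequality $\sum_\gamma\ord(\mathcal V_0^{(\gamma)},y)\le d$, and achieving that at every $y$ with covers of arbitrarily small mesh is precisely the nontrivial content of the theorem. The paper's route — embed $X$ (you could alternatively embed $Y$, which removes the group) into $[0,1]^{2d+1}$ via Menger--N\"obeling, pull back the ``brickwall'' covers of \Cref{lem:cube_cover_R_d}, and use \Cref{lem:avoid_finite_values_dim} (Ostrand--Kolmogorov combined with the Baire-category \Cref{lem:separate}) to choose the embedding so the orbit avoids the bad finite value-sets $A_\ell$ often enough — is what furnishes the bound. Your sketch, as written, replaces this with a claim that I do not believe is achievable and for which no proof is offered; this is the main gap to repair.
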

For $\Gamma$-dynamical system $(X,T)$ with $\Gamma$ a finite group the definition of mean dimension directly implies that $\mdim(X,T) = \frac{1}{|\Gamma|}\dim(X)$. \Cref{thm:dim_finite_group_action} 
thus establishes that for free actions of finite groups, our new invariant coincides with mean dimension: 
\[\dim(X,T)=\mdim(X,T).\]
Before proving \Cref{thm:dim_finite_group_action} we will prove an auxiliary lemma that essentially says you can choose a function that "misses" a set of values most of the time, where ``most of the time'' is quantitatively bounded by the dimension of the space:
\begin{lemma}\label{lem:avoid_finite_values_dim}
Let  $\Gamma$ be a finite group and $(X,T)$ a free $\Gamma$-system with $\dim(X)<\infty$. Let $k \in \N$ and let $A \subset [0,1]$ be a finite set. Then there is a dense and open set of continuous functions $f:X \to [0,1]^k$ with the property that 
\begin{equation}
    \sup_{x \in X}\sum_{j=1}^k\sum_{\gamma \in \Gamma}\mathbf{1}_{A}(f(T_\gamma(x))_j) \le \dim(X).
\end{equation}

\end{lemma}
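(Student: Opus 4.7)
Write $d := \dim(X)$ and
$G := \{ f \in C(X,[0,1]^k) : \sup_{x \in X} S(x,f) \le d \}$,
where $S(x,f) := \sum_{j=1}^{k}\sum_{\gamma \in \Gamma} \mathbf{1}_A(f(T_\gamma x)_j)$. Openness of $G$ follows from upper semicontinuity: finiteness of $A$ makes $\mathbf{1}_A$ upper semicontinuous on $[0,1]$, whence $S$ is upper semicontinuous on $X \times C(X,[0,1]^k)$; compactness of $X$ upgrades this to $f \mapsto \sup_x S(x,f)$ being upper semicontinuous, and integer-valuedness of $S$ gives $G = \{f : \sup_x S(x,f) < d+1\}$.

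For density, my plan is to feed \Cref{lem:separate} with an Ostrand--Kolmogorov decomposition of $k|\Gamma|$ families, lifted from the quotient $Y := X/\Gamma$ (which satisfies $\dim Y = d$, as $\pi : X \to Y$ is a finite covering). Pick a sequence of open covers $\mathcal{V}_n$ of $Y$ with $\mesh(\mathcal{V}_n) \to 0$, small enough that any two overlapping elements of $\mathcal{V}_n$ share a trivialisation chart for $\pi$, and with $\ord(\mathcal{V}_n) \le d$. Apply \Cref{lem:ostrand_kolmogorov_covers} with $k' = k|\Gamma|$ to produce pairwise disjoint closed families $\tilde{\mathcal{E}}^{j,\gamma}_n \subseteq Y$ refining $\mathcal{V}_n$, indexed by $(j,\gamma) \in \{1,\dots,k\} \times \Gamma$, satisfying
\[
\sum_{j,\gamma} \mathbf{1}_{\bigcup \tilde{\mathcal{E}}^{j,\gamma}_n}(\bar y) \;\ge\; k|\Gamma| - d \qquad \forall \bar y \in Y. \tag{$\star$}
\]
For each $E \in \tilde{\mathcal{E}}^{j,\gamma}_n$, choose a sheet $s_e(E) \subseteq \pi^{-1}(E)$ (consistently within each trivialisation chart), set $s_\gamma(E) := T_\gamma s_e(E)$, and let $\mathcal{C}_{j,n} := \{ s_\gamma(E) : \gamma \in \Gamma,\; E \in \tilde{\mathcal{E}}^{j,\gamma}_n \}$. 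Sheets at different positions are disjoint and pairwise disjointness of the $\tilde{\mathcal{E}}^{j,\gamma}_n$ handles same-position pairs, so $\mathcal{C}_{j,n}$ is a pairwise disjoint closed family with $\mesh \to 0$.

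Apply \Cref{lem:separate} (the $\tilde G$-variant incorporating the finite set $A$) to $(\mathcal{C}_{j,n})$: a dense $G_\delta$ subset $\tilde G \subseteq C(X,[0,1]^k)$ has the property that, for arbitrarily large $n$, $f_j(y) \notin A$ for every $y \in \bigcup \mathcal{C}_{j,n}$ and every $j$. Fix $f \in \tilde G$, pick such an $n$, and fix $x \in X$; let $\eta(x) \in \Gamma$ denote the sheet containing $x$, so that $T_\gamma x$ lies in sheet $\gamma\eta(x)$ over $\pi(x)$. If $f_j(T_\gamma x) \in A$ then $T_\gamma x \notin \bigcup \mathcal{C}_{j,n}$, which forces $\pi(x) \notin \bigcup \tilde{\mathcal{E}}^{j,\gamma\eta(x)}_n$. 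The reparametrisation $\gamma' := \gamma\eta(x)$ (a bijection of $\Gamma$ for fixed $\eta(x)$) combined with $(\star)$ at $\bar y = \pi(x)$ yields
\[
S(x,f) \;\le\; \big|\{(j,\gamma') : \pi(x) \notin \textstyle\bigcup \tilde{\mathcal{E}}^{j,\gamma'}_n\}\big| \;\le\; d,
\]
so $\tilde G \subseteq G$ and $G$ is dense. Combined with openness, $G$ is dense open.

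The most delicate step is the pairwise disjointness of $\mathcal{C}_{j,n}$. Intra-sheet disjointness is automatic from the pairwise disjointness of $\tilde{\mathcal{E}}^{j,\gamma}_n$ on $Y$. Inter-sheet disjointness is non-trivial when $\pi$ has non-trivial monodromy, and it is secured by arranging that any two overlapping elements of $\mathcal{V}_n$ lie in a common trivialisation chart of $\pi$ and choosing the sheet-lifts $s_e(\cdot)$ consistently chart-by-chart using a local section. The free action hypothesis enters precisely here, ensuring that $\pi$ is a genuine $|\Gamma|$-fold covering so that the sheets of $\pi^{-1}(V)$ are disjoint and permuted simply transitively by $T$.
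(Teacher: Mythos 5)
Your overall strategy (Ostrand--Kolmogorov decomposition plus the Baire-category \Cref{lem:separate}) is the same as the paper's, but you route it through the quotient $Y=X/\Gamma$ and then lift back, whereas the paper stays inside $X$. That rerouting introduces exactly the difficulty you flag and then wave away: the collection $\mathcal{C}_{j,n}$ is only pairwise disjoint if the sheet choices $s_e(E)$ are globally coherent, meaning that whenever $E\cap E'\neq\emptyset$ (with $E\in\tilde{\mathcal{E}}^{j,\gamma}_n$, $E'\in\tilde{\mathcal{E}}^{j,\gamma'}_n$, $\gamma\neq\gamma'$) the lifts agree over the overlap. Likewise, your final inequality relies on a global ``sheet label'' $\eta\colon X\to\Gamma$ with $\eta(T_\gamma x)=\gamma\eta(x)$, and on $s_e(E)$ always being the $\eta$-sheet $e$; both are equivalent to the existence of a global section of $\pi$, i.e.\ to triviality of the covering $\pi\colon X\to Y$. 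This can genuinely fail: take $X=S^1$ with the antipodal $\Z/2$-action, so $Y=S^1$ and $\pi$ has non-trivial monodromy. ``Arranging that overlapping elements of $\mathcal{V}_n$ lie in a common chart and choosing sheet-lifts consistently chart-by-chart'' does not resolve this, because the compatibility conditions along a chain of overlapping $E$'s form a cocycle that is obstructed by the monodromy.

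The paper sidesteps the issue by never leaving $X$: it fixes a finite open cover $\mathcal{U}$ of $X$ by sets $U$ with $(T_\gamma\overline{U})_{\gamma\in\Gamma}$ pairwise disjoint, runs Ostrand--Kolmogorov to get families of closed subsets of $\overline{U}$, and takes their $T_\gamma$-translates; pairwise disjointness is then automatic from the disjointness of the $T_\gamma\overline{U}$, with no monodromy to worry about. The proof then shows for each fixed $\overline{U}\in\mathcal{U}$ a dense $G_\delta$ of $f$'s satisfying the bound on $\overline{U}$, and intersects over the finitely many $\overline{U}$. Your observation that $G$ is open via upper semicontinuity of $f\mapsto\sup_x S(x,f)$ (using integer values and compactness of $X$) is correct and is a nice point the paper leaves implicit; keep it. To repair the density argument, drop the quotient and imitate the paper's localization on $\overline{U}$, which is essentially the piece your construction is missing.
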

\begin{proof}
Let $d= \dim(X)$ and let $A \subset [0,1]$ be a finite set.
Since $(X,T)$ is a free $\Gamma$-action and $\Gamma$ is a finite group, for every $x\in X$ there is an open neighborhood $U_x$ such that $(T_\gamma(\overline{U_x}))_{\gamma \in \Gamma}$ are pairwise disjoint. By compactness of $X$, there exist a finite open cover $\mathcal{U}$ of $X$ such that  $(T_\gamma(\overline{U}))_{\gamma \in \Gamma}$ are pairwise disjoint for every $U \in \mathcal{U}$.

It suffices to prove that for every $U \in \mathcal{U}$ there is a dense $G_\delta$ set of functions $f \in C(X,[0,1]^k)$ satisfying
\begin{equation}\label{eq:sup_x_in_U_le_d}
   \sup_{x \in \overline{U}}\sum_{j=1}^k\sum_{\gamma \in \Gamma}\mathbf{1}_{A}(f(T_\gamma(x))_j) \le d.
\end{equation}

Fix $U \in \mathcal{U}$.
Because $\Gamma$ is finite, using the fact that $\dim(X)=d$, for every $n \in \N$ we can find an open cover $\mathcal{V}_n \in \Cov(X)$ such that $\mesh(T_\gamma(\mathcal{V}_n)) < \frac{1}{n}$ for every $\gamma \in \Gamma$. Using \Cref{lem:ostrand_kolmogorov_covers} for every $n \in \N$ we can find collections 
$(\mathcal{C}_{n,j,\gamma})_{1\le j \le k, \gamma \in \Gamma}$ such that each $\mathcal{C}_{n,j,\gamma}$ consists of pairwise disjoint closed subsets of $X$, and so that for every $1\le j \le k$ and $\gamma \in \Gamma$ we have $\mathcal{C}_{n,j,\gamma} \succeq \mathcal{V}_n$   and
\begin{equation}\label{eq:Ostrand_cover_inequality}
\sum_{j=1}^k \sum_{\gamma \in \Gamma} \overline{\mathbf{1}}_{\mathcal{C}_{n,j,\gamma}}(x) \le d.
\end{equation}

For every $n \in \N$ and $1\le j \le k$ let
\[
\mathcal{C}_{n,j} = \left\{ T_\gamma(C\cap \overline{U}) ~:~ \gamma \in \Gamma,~ C \in \mathcal{C}_{n,j,\gamma} \right\}.
\]
Then each   is a finite collection of pairwise disjoint closed sets each having diameter at most $\frac{1}{n}$.

By \Cref{lem:separate} there exists a dense $G_\delta$ set of functions $f=(f_1,\ldots,f_k) \in C(X,[0,1]^k)$ such that for infinitely many $n$'s we have that 
$f_j(x) \not\in A$ for every $x \in \bigcup_{j=1}^k\bigcup_{C \in \mathcal{C}_{n,j}}C$.

Let $f \in C(X,[0,1]^k)$ be as above.
Then there are infinitely many $n$'s such that for any   $x \in \overline{U}$ we have that $f_j(T_\gamma(x)) \not \in A$ whenever $\overline{\mathbf{1}}_{ \mathcal{C}_{n,j,\gamma}}(x)=0$.
It follows that for any $x \in \overline{U}$
\[
\sum_{j=1}^k\sum_{\gamma \in \Gamma}1_{A}(f(T_\gamma(x))_j) \le \sum_{j=1}^k\sum_{\gamma \in \Gamma}\overline{\mathbf{1}}_{\mathcal{C}_{n,j,\gamma}}(x)\le d.
\]
We have thus proved that for a dense $G_\delta$ set of function \Cref{eq:sup_x_in_U_le_d} holds.
\end{proof}

\begin{proof}[Proof of \Cref{thm:dim_finite_group_action}]
Denote $d=\dim(X)$, $k=|\Gamma|$.

We first prove that $\dim(X,T) \ge \frac{d}{k}$.
Let  $\mathcal{U} \in \Cov(X)$ be an open cover with $\dim(\mathcal{U}) =d$.
We will show that $\dim(\mathcal{U},T) \ge \frac{d}{k}$.
Indeed, choose any $\mathcal{V} \in \Cov(X)$ such that $\mathcal{V} \succeq \mathcal{U}$ then $\dim(\mathcal{U}) =d$ implies that there exists $x_0 \in X$ with $\ord(\mathcal{V},x_0) \ge d$. Then $\mu = \frac{1}{k}\sum_{\gamma \in \Gamma} \delta_{T_\gamma(x_0)} \in \Prob(X,T)$, where for $x \in X$ we denote by $\delta_x$ the Dirac measure at $x$, which by definition is the unique probability measure supported on the singleton $\{x\}$.
We have
\[
\int_X \ord(\mathcal{V},x) d\mu(x)= \frac{1}{k}\sum_{\gamma \in \Gamma} \ord(\mathcal{V},T_\gamma(x_0)) \ge \frac{1}{k}\ord(\mathcal{V},x_0) = \frac{d}{k}.
\]
This shows that 
$\ord(\mathcal{V},T) \ge \frac{d}{k}$ for any $\mathcal{V} \in \Cov(X)$ such that $\mathcal{V} \succeq \mathcal{U}$, hence $\dim(\mathcal{U},T) \ge \frac{d}{k}$. 

To prove the reverse inequality $\dim(X,T) \le \frac{d}{k}$, it suffices to show that for any $\epsilon >0$ we can find   $\mathcal{U} \in \overline{\Cov}(X)$ with $\mesh(\mathcal{U}) < \epsilon$  $\ord(\mathcal{U},T) \le \frac{d}{k}$.

 Since $\Gamma$ is a finite group, any ergodic $\mu \in \Prob(X,T)$ is of the form $\frac{1}{k}\sum_{\gamma \in \Gamma}\delta_{T_\gamma x}$ for some $x \in X$. So the condition $\ord(\mathcal{U},T) \le \frac{d}{k}$ can be rewritten as follows:
\begin{equation}\label{eq:ord_le_d_T_finite}
   \forall x \in X~ \sum_{\gamma \in \Gamma}\ord(\mathcal{U},T_\gamma(x)) \le d.
\end{equation}
By a standard compactness argument, it would be enough to show that for any $\epsilon >0$ there exists a finite cover $\mathcal{U}$ of $X$ by \emph{closed} sets with $\mesh(\mathcal{U})<\epsilon$ so that \Cref{eq:ord_le_d_T_finite} holds. 
 By \Cref{lem:cube_cover_R_d} for every $n \in \N$ there exists a cover
$\mathcal{C}_{n}$
of $[0,1]^{2d+1}$ 
by closed sets with $\mesh(\mathcal{C}_{n})< \frac{1}{n}$ and finite pairwise disjoint sets $A_{1,n},\ldots, A_{2d+1,n} \subset [0,1]$ such that for every $v=(v_1,\ldots,v_{2d+1}) \in [0,1]^{2r+1}$
\begin{equation}\label{eq:ord_C_n_upp_bound}
    \ord(\mathcal{C}_n,v) \le \sum_{\ell=1}^{2d+1}1_{A_{\ell,n}}(v_\ell).
\end{equation}

 Since $\dim(X)=d$, the classic Menger-N\"{o}beling-Pontryagin theorem implies that a dense $G_\delta$ set of continuous functions $g:X \to [0,1]^{2d+1}$ are injective.
 Applying  \Cref{lem:avoid_finite_values_dim} with $k=2d+1$, for each $n \in \N$ there exists a dense $G_\delta$ functions $g \in C(X,[0,1]^{2d+1})$ such that for such that  
 \begin{equation}\label{eq:sum_A_ell_n_uppperbound}
      \sup_{x \in X}\sum_{\ell=1}^{2d+1}\sum_{\gamma \in \Gamma}1_{A_\ell,n}(g(x)) \le d.
 \end{equation}
 By the  Baire Category theorem a countable intersection of dense $G_\delta$-sets is also dense, so there exists a embedding $g:X \to  [0,1]^{2d+1}$ that satisfies \eqref{eq:sum_A_ell_n_uppperbound} for all $n \in \N$.
 Since $\lim_{n \to \infty}\mesh(\mathcal{C}_{n}) =0$ and $g:X \to  [0,1]^{2d+1}$ is an embedding, there exist $n \in \N$ such that $\mesh(g^{-1}(\mathcal{C}_n)) < \epsilon$. Let $\mathcal{U}=g^{-1}(\mathcal{C}_n)$ for $n \in \N$ as above. 
 Combining \Cref{eq:ord_C_n_upp_bound} and \Cref{eq:sum_A_ell_n_uppperbound} we see that
 \[
 \sup_{x \in X}\sum_{\gamma \in \Gamma}\ord(\mathcal{U},x) \le d=\dim(X).
 \]

\end{proof}

A slight modification of the proof gives that $\dim(X,T) = \frac{1}{k}\dim(X)$ for every dynamical system $(X,T)$ such that size of every orbit has size $k$.
We note that in general, if every orbit of $(X,T)$ is finite then 
\[
\dim(X,T) = \sup_{k \in \mathbb{N}}\frac{1}{k}\dim(X_k),
\]
where $X_k$ is the set of point in $X$ whose orbit has size at most $k$.

\section{Dimension of cubical shifts}\label{sec:dim_cube_shift}

Given a compact metrizable space $K$, and a countable group $\Gamma$, the space $K^\Gamma$ of $K$-valued functions on $\Gamma$ is also a compact metrizable space, where $K^\Gamma$ is equipped with the product topology.  We denote by $(K^\Gamma,\shift)$ the dynamical system corresponding to the left-shift action of $\Gamma$ on $K^\Gamma$, defined by $(\shift_\gamma(x))_{\gamma'}=x_{\gamma^{-1}\gamma'}$.

\begin{theorem}\label{thm:dim_cubical_shift}
Let $\Gamma$ be a countable group. Then
for any $d \in \N$ we have
\[\dim([0,1]^d)^\Gamma,\shift) = d.
\]
\end{theorem}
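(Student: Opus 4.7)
The plan has two directions. For the \emph{lower bound} $\dim(([0,1]^d)^\Gamma,\shift) \ge d$, I exploit that every point of the diagonal $\Delta := \{v^\Gamma : v \in [0,1]^d\} \subset X$ is a shift-fixed point. Pick an open cover $\mathcal{U}_0 \in \Cov([0,1]^d)$ witnessing $\dim([0,1]^d) = d$ and set $\mathcal{U} := \pi_e^{-1}(\mathcal{U}_0)$, where $\pi_\gamma : X \to [0,1]^d$ denotes evaluation at $\gamma \in \Gamma$. For any $\mathcal{V} \succeq \mathcal{U}$ in $\Cov(X)$, restricting $\mathcal{V}$ to $\Delta$ (which is canonically homeomorphic to $[0,1]^d$ with $\pi_e|_\Delta$ the identity) yields an open cover of $[0,1]^d$ refining $\mathcal{U}_0$; hence $\ord(\mathcal{V}, v_0^\Gamma) \ge d$ at some $v_0 \in [0,1]^d$. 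Since $\delta_{v_0^\Gamma} \in \Prob(X,\shift)$, this certifies $\ord(\mathcal{V},\shift) \ge d$ and so $\dim(\mathcal{U},\shift) \ge d$.

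For the \emph{upper bound} $\dim(([0,1]^d)^\Gamma,\shift) \le d$, fix $\mathcal{U} \in \Cov(X)$ and use compactness to produce a finite $F \subset \Gamma$ and $\epsilon > 0$ such that every set whose projection to $([0,1]^d)^F$ has sup-norm diameter $< \epsilon$ lies in some element of $\mathcal{U}$. I apply \Cref{lem:cube_cover_R_d} \emph{once per $\gamma \in F$}: for each $\gamma$ I obtain a closed cover $\mathcal{C}_\gamma$ of $\mathbb{R}^d$ with $\mesh(\mathcal{C}_\gamma) \le \epsilon$, together with pairwise disjoint locally finite sets $A_{\gamma,1},\dots,A_{\gamma,d} \subset \mathbb{R}$ satisfying $\ord(\mathcal{C}_\gamma,v) \le \sum_{\ell} \mathbf{1}_{A_{\gamma,\ell}}(v_\ell)$. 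The crucial extra requirement is that \emph{for each fixed $\ell$}, the family $\{A_{\gamma,\ell}\}_{\gamma\in F}$ is pairwise disjoint in $\mathbb{R}$. Inspecting the proof of \Cref{lem:cube_cover_R_d}, each $A_{\gamma,\ell}$ is a finite union of $(\epsilon/2)$-translates of a lattice, so choosing the $|F|$ translation offsets to take distinct residues modulo $\epsilon/2$ in each coordinate achieves the disjointness. A small open thickening (done so as to preserve both the diameter bound and the disjointness of the $A_{\gamma,\ell}$'s on $[0,1]$, where they are finite) converts the $\mathcal{C}_\gamma$ to open covers without disturbing the pointwise order estimate.

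Now set $\mathcal{V}' := \bigvee_{\gamma \in F}\pi_\gamma^{-1}(\mathcal{C}_\gamma)$; the diameter bound gives $\mathcal{V}' \succeq \mathcal{U}$. Iterated application of \Cref{lem:ord_subadditive} yields $\mathcal{V} \succeq \mathcal{V}'$ with
\[
\ord(\mathcal{V},x) \le \sum_{\gamma \in F} \ord(\mathcal{C}_\gamma, x_\gamma) \le \sum_{\gamma \in F}\sum_{\ell=1}^d \mathbf{1}_{A_{\gamma,\ell}}((x_\gamma)_\ell).
\]
Shift-invariance of $\mu \in \Prob(X,\shift)$ collapses each marginal to the identity coordinate: $\mu\{x : (x_\gamma)_\ell \in A_{\gamma,\ell}\} = (\pi_e)_*\mu\{v : v_\ell \in A_{\gamma,\ell}\}$. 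Summing and invoking the cross-$\gamma$ disjointness,
\[
\int \ord(\mathcal{V},x)\,d\mu(x) \le \sum_{\ell=1}^d (\pi_e)_*\mu\!\left(\left\{v : v_\ell \in \bigsqcup_{\gamma \in F} A_{\gamma,\ell}\right\}\right) \le d,
\]
which is uniform in $\mu$, so $\dim(\mathcal{U},\shift) \le d$.

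The main obstacle is exactly this disjointness trick. Absent the coordinated translations, a naive pullback-and-join computation gives only $\le d|F|$, losing a factor of $|F|$; the shift-invariance of $\mu$ then has no way to absorb the redundant coordinates. Verifying that the brickwall construction of \Cref{lem:cube_cover_R_d} has enough independent translation freedom to force cross-$\gamma$ disjointness for arbitrary finite $F$ is the essential point. Everything else—the compactness reduction to cylindrical covers, the iterated subadditivity, and the open-versus-closed cover bookkeeping—is routine.
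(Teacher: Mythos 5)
Your proof is correct in substance and uses the same basic devices as the paper -- the diagonal/fixed-point set for the lower bound, the brickwall covers of \Cref{lem:cube_cover_R_d} and shift-invariance of $\mu$ for the upper bound -- but the upper bound is routed differently, in a way that is more laborious than necessary. The paper applies \Cref{lem:cube_cover_R_d} \emph{once} to $([0,1]^d)^F \cong [0,1]^{d|F|}$, which directly delivers a closed cover $\mathcal{U}'$ of $([0,1]^d)^F$ together with $d|F|$ \emph{pairwise disjoint} finite sets $A_{\ell,\gamma}$ and the order bound $\ord(\mathcal{U}',v)\le\sum_{\ell,\gamma}\mathbf{1}_{A_{\ell,\gamma}}(v_{\ell,\gamma})$. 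The cross-$\gamma$ disjointness that you have to engineer by hand (coordinated translations of $|F|$ separate brickwall covers of $\mathbb{R}^d$) thus comes for free, and there is no joint refinement to form, so \Cref{lem:ord_subadditive} is not needed. The paper then feeds the cover $\mathcal{U}=\pi_F^{-1}(\mathcal{U}')$ and the sets $K_{j,\gamma}=\{x: x_{j,\gamma^{-1}}\in A_{j,\gamma}\}$ into \Cref{lem:dim_U_cominatorial_upper_bound}, which is exactly the shift-invariance computation you carry out inline. Also note that, because \Cref{lem:cube_cover_R_d} produces \emph{closed} covers and \Cref{lem:ord_subadditive} is stated for open ones, your ``small open thickening'' step is carrying more weight than the one sentence suggests: naively thickening increases $\ord$ everywhere, so you would either need to simultaneously thicken the $A_{\gamma,\ell}$'s (harmless but should be said), or instead argue, as the paper implicitly does, that it suffices to exhibit \emph{closed} covers of small mesh with $\ord(\cdot,T)\le d$. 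Your translation argument is sound (each $A_\ell$ is locally finite, so generic offsets yield the required disjointness), and the lower-bound argument via $\delta_{v_0^\Gamma}\in\Prob(X,\shift)$ is exactly the paper's observation that $\Fix(\shift)\cong[0,1]^d$ embeds as a topological-measure pair.
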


The inequality $\dim([0,1]^d)^\Gamma,\shift) \ge d$ is a consequence of the fact that the set of fixed points for the shift is homeomorphic to $[0,1]^d$, and  $\dim([0,1]^d)=d$.
So \Cref{thm:dim_cubical_shift} reduces to the inequality $\dim([0,1]^d)^\Gamma,\shift) \le d$.

\begin{lemma}\label{lem:dim_U_cominatorial_upper_bound}
    Let $\Gamma$ be a countable group and 
    let $(X,T)$ be a $\Gamma$-system. Let  $\mathcal{U} \in \Cov(X)$ be an open cover, let $F  \Subset \Gamma$   be a finite set and fix $d \in \mathbb{N}$.
    Suppose  that for every $1\le j \le d$ and $\gamma \in F$ there exist a closed set $K_{j,\gamma} \subseteq X$ so that:
    \begin{enumerate}
        \item For every $x \in X$ $\ord(\mathcal{U},x) \le \sum_{j=1}^d \sum_{\gamma \in F}\mathbf{1}_{K_{j,\gamma}}(x)$
        \item For every $1\le j \le d$ the sets $(T_{\gamma}(K_{j,\gamma}))_{\gamma \in F}$ are pairwise disjoint
    \end{enumerate}
    
    Then 
    
    \[
    \ord(\mathcal{U},T) \le d.
    \]
\end{lemma}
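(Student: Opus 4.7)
The plan is to fix an arbitrary $\mu \in \Prob(X,T)$ and show directly that
\[
\int_X \ord(\mathcal{U},x)\, d\mu(x) \le d,
\]
after which taking the supremum over $\mu \in \Prob(X,T)$ gives $\ord(\mathcal{U},T) \le d$ by the definition in \eqref{eq:ord_P_def}. The two hypotheses are essentially tailor-made for this: condition (1) bounds $\ord(\mathcal{U},x)$ pointwise by a sum of indicators of the sets $K_{j,\gamma}$, and condition (2) bounds the total mass of those indicators via disjointness of their translates together with invariance of $\mu$.

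First, I would integrate the pointwise bound from hypothesis (1) against $\mu$ to obtain
\[
\int_X \ord(\mathcal{U},x)\, d\mu(x) \le \sum_{j=1}^d \sum_{\gamma \in F} \mu(K_{j,\gamma}).
\]
Next, for each fixed $j \in \{1,\ldots,d\}$ I would invoke the $T$-invariance of $\mu$, which gives $\mu(K_{j,\gamma}) = \mu(T_\gamma(K_{j,\gamma}))$ for every $\gamma \in F$. Combined with the disjointness of the family $\bigl(T_\gamma(K_{j,\gamma})\bigr)_{\gamma \in F}$ asserted in hypothesis (2), this yields
\[
\sum_{\gamma \in F} \mu(K_{j,\gamma}) \;=\; \sum_{\gamma \in F} \mu\bigl(T_\gamma(K_{j,\gamma})\bigr) \;=\; \mu\!\left(\bigcup_{\gamma \in F} T_\gamma(K_{j,\gamma})\right) \;\le\; 1.
\]
Summing this over $j = 1,\ldots,d$ produces the required bound of $d$.

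I do not expect any substantive obstacle: the lemma is essentially a bookkeeping statement whose whole content is the interaction between the pointwise hypothesis on the indicators and the measure-theoretic disjointness. The only point requiring minor care is fixing the convention that $T$-invariance of $\mu$ means $\mu(T_\gamma A) = \mu(A)$ for every Borel set $A$ and $\gamma \in \Gamma$, which is immediate because each $T_\gamma$ is a homeomorphism.
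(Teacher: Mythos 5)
Your proof is correct and follows exactly the same route as the paper's: integrate the pointwise bound from hypothesis (1), use $T$-invariance to replace $\mu(K_{j,\gamma})$ with $\mu(T_\gamma(K_{j,\gamma}))$, and use disjointness from hypothesis (2) to bound each inner sum by $1$. There is nothing to add.
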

\begin{proof}
Let $F \Subset \Gamma$, $\mathcal{U} \in \Cov(X)$ and $(K_{j,\gamma})_{1\le j \le d, \gamma \in F}$ be as in the statement.
We need to prove that for every $\mu \in \Prob(X,T)$
\[
\int \ord(\mathcal{U},x) d\mu(X) \le d.
\]
Let $\mu \in \Prob(X,T)$ be a $T$-invariant probability measure.
Since $\ord(\mathcal{U},x) \le \sum_{j=1}^d \sum_{\gamma \in F} 1_{K_{j,\gamma}}(x)$ for every $x \in X$, it suffices to prove that
\[
\int \sum_{j=1}^d \sum_{\gamma \in F} \mathbf{1}_{K_{j,\gamma}}(x) d\mu(x) \le d.
\]
Note that 
\[
\int \sum_{j=1}^d \sum_{\gamma \in F} \mathbf{1}_{K_{j,\gamma}}(x) d\mu(x) = \sum_{j=1}^d \sum_{\gamma \in F}\mu(K_{j,\gamma}).
\]
Because $\mu$ is $T$-invariant, for every $1\le j \le d$ and $\gamma \in F$ we have that $\mu(K_{j,\gamma})=\mu(T_\gamma(K_{j,\gamma}))$.
Because the sets $(T_{\gamma}(K_{j,\gamma}))$ are pairwise disjoint for every $1\le j \le d$, we have that
\[
\sum_{\gamma \in F}\mu(T_\gamma(K_{j,\gamma})) \le 1.
\]
It follows that
\[
\sum_{\gamma \in F}\mu(K_{j,\gamma}) \le 1.
\]
Summing over $1\le j \le d$ we conclude that 
\[
\int  \sum_{j=1}^d \sum_{\gamma \in F} 1_{K_{j,\gamma}}(x) \le d,
\]
concluding the proof.

\end{proof}

\begin{proof}[Proof of \Cref{thm:dim_cubical_shift}]

    We need to prove the inequality $\dim([0,1]^d)^\Gamma,\shift) \le d$. 
    It suffices to show that  for every $F \Subset \Gamma$ and $\epsilon >0$ there exists a finite cover $\mathcal{U}$ of $([0,1]^d)^\Gamma$ by closed sets such that $\mesh(\pi_F(\mathcal{U})) < \epsilon$, where $\pi_F:([0,1]^d)^\Gamma\to ([0,1]^d)^F$ is the obvious continuous surjection.
    So choose any finite set $F \Subset \Gamma$ and $\epsilon>0$.
    By \Cref{lem:cube_cover_R_d} there exists a cover $\mathcal{U}'$ of $([0,1]^d)^F$  by closed sets such that $\mesh(\mathcal{U}')<\epsilon$ and a family $(A_{\ell,\gamma})_{1\le \ell \le d,\gamma \in F}$ of pairwise disjoint  finite subsets of $[0,1]$  such that
    for every $v \in ([0,1]^d)^F$
    \[
     \ord(\mathcal{U}',x) \le \sum_{\ell=1}^d\sum_{\gamma \in F}\mathbf{1}_{A_{\ell,\gamma}}(v_{\ell,\gamma}).
    \]
    Let $\mathcal{U}$ be  pullback of $\mathcal{U}'$ via $\pi_F$.

    For $1\le j \le d$ and $\gamma \in F$, let $K_{j,\gamma} \subset ([0,1]^d)^\Gamma$ denote the set of points $(x_{j,\gamma})_{1\le j \le d,~ \gamma \in \Gamma}   \in ([0,1]^d)^\Gamma$ such that $x_{j,\gamma^{-1}} \in A_{j,\gamma}$. 
    Then the cover $\mathcal{U}$ of $([0,1]^d)^\Gamma$ and the sets $(K_{j,\gamma})_{1\le j \le d,\gamma \in F}$ satisfy the hypothesis of \Cref{lem:dim_U_cominatorial_upper_bound}.
    It follows by \Cref{lem:dim_U_cominatorial_upper_bound} that 
    \[
    \ord(\mathcal{U},\shift) \le d.
    \]
\end{proof}

\begin{corollary}\label{cor:cube_emb_dim}
    Let $\Gamma$ be any countable or finite group and let $(X,T)$ be a $\Gamma$-dynamical system. If $(X,T)$ embeds in $(([0,1]^d)^\Gamma,\shift)$ then $\dim(X,T) \le d$.
\end{corollary}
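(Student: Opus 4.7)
The plan is to derive this corollary immediately from the two substantial results already proved in the excerpt: \Cref{prop:dim_emb_monotone}, which establishes monotonicity of $\dim$ under embeddings of topological-measure pairs, and \Cref{thm:dim_cubical_shift}, which computes $\dim(([0,1]^d)^\Gamma, \shift) = d$. The only real content of the argument is verifying that an equivariant topological embedding in the category of $\Gamma$-dynamical systems induces an embedding in the category of topological-measure pairs, after which the two cited results slot together directly.

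First I would recall that by the discussion in \Cref{sec:dynamical_systems}, every $\Gamma$-dynamical system $(X,T)$ carries the associated topological-measure pair $(X, \Prob(X,T))$, and this assignment is functorial: a continuous equivariant map $f:X \to Y$ between $\Gamma$-dynamical systems satisfies $f_*\mu \in \Prob(Y,S)$ whenever $\mu \in \Prob(X,T)$, since $\Gamma$-equivariance of $f$ yields $S_\gamma \circ f = f \circ T_\gamma$ and hence $(S_\gamma)_* f_* \mu = f_*(T_\gamma)_*\mu = f_*\mu$. In particular, if the given embedding $f:X \hookrightarrow ([0,1]^d)^\Gamma$ is a continuous equivariant topological embedding, then $f$ is a morphism of topological-measure pairs $(X,\Prob(X,T)) \to (([0,1]^d)^\Gamma, \Prob(([0,1]^d)^\Gamma, \shift))$ that is also a topological embedding of $X$ into $([0,1]^d)^\Gamma$, i.e., an embedding in the sense of \Cref{sec:TM_pairs}.

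The second step is a one-line application: \Cref{prop:dim_emb_monotone} now gives
\[
\dim(X,T) \;=\; \dim(X, \Prob(X,T)) \;\le\; \dim\bigl(([0,1]^d)^\Gamma, \Prob(([0,1]^d)^\Gamma, \shift)\bigr) \;=\; \dim(([0,1]^d)^\Gamma, \shift),
\]
and by \Cref{thm:dim_cubical_shift} the right-hand side equals $d$.

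There is essentially no obstacle here; the corollary is a formal consequence of the functorial setup built up in \Cref{sec:TM_pairs} and \Cref{sec:dynamical_systems}. The only thing worth flagging for the reader is that the definition of embedding we are invoking is the one given at the start of \Cref{sec:TM_pairs} (a morphism of topological-measure pairs that is simultaneously a topological embedding of the underlying spaces), which is precisely what an equivariant topological embedding of dynamical systems supplies. No additional machinery, and in particular none of the technical lemmas from \Cref{sec:Baire_Category} or \Cref{sec:dim_cube_shift}, is needed beyond what is already packaged into the two cited statements.
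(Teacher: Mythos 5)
Your proposal is correct and is exactly the intended argument: the paper states \Cref{cor:cube_emb_dim} immediately after \Cref{thm:dim_cubical_shift} with no separate proof precisely because it follows by combining \Cref{prop:dim_emb_monotone} with \Cref{thm:dim_cubical_shift} via the functor from $\Gamma$-dynamical systems to topological-measure pairs described in \Cref{sec:dynamical_systems}. Your verification that an equivariant topological embedding yields an embedding of pairs is the one small step the paper leaves implicit, and you have filled it in correctly.
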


\section{Dimension for actions of amenable groups}\label{sec:dim_amenable}
In this section we discuss the quantity $\dim(X,T)$ where $(X,T)$ is a $\Gamma$-dynamical system and $\Gamma$ is a countable amenable group. 
%It is  in this generality that 
The basic definition of mean  dimension of a dynamical system $(X,T)$  requires amenability of the acting group $\Gamma$. A more involved definition of a generalization called  ``sofic-mean dimension''  has been defined for actions of sofic groups \cite{MR3077882}. Related results about $\dim(X,T)$ and sofic mean dimension for actions of  sofic groups will be presented elsewhere.
One of the definitions of amenability of $\Gamma$ is the condition that $\Prob(X,T) \ne \emptyset$ for any $\Gamma$-dynamical system $(X,T)$.  So in this setting we always have $\dim(X,T)  \ge 0$.
We will show that  $\mdim(X,T) \le \dim(X,T)$, and that equality holds if $(X,T)$ satisfies the \emph{uniform Rokhlin property (URP)}, a property that we will soon recall.  

Let $\Gamma$ be a countable amenable group, $(X,T)$ a $\Gamma$-dynamical system and $0<t < +\infty$.
Recall that $\mdim(X,T) <t$ if and only if for every $\epsilon$ there exists a finite set $F \Subset \Gamma$ and an open  cover $\mathcal{U} \in \Cov(X)$ so that $\mesh(T_\gamma^{-1}(X))< \epsilon$ for all $\gamma \in F$ and so that $\ord(\mathcal{U}) < |F|t$.

We would like to present an equivalent formulation of $\dim(X,T)$ that does not explicitly involve taking integrals with respect to  invariant probability measures in the amenable  setting. For this purpose we recall the following  basic result relating integrals over invariant measures and averages along orbit segments.  

\begin{proposition}\label{prop:upper_semi_erg_max_value}
     Let $\Gamma$ be a finite or countable amenable group, let $(F_n)_{n=1}^\infty$ be a F\o lner sequence in $\Gamma$ and let $(X,T)$ be a $\Gamma$-dynamical system. Then for any upper semicontinuous function $f:X \to [0,+\infty)$ the following holds:
     \begin{equation}\label{eq:erg_max_avg_lim}
         \sup_{\mu \in \Prob(X,T)}\int_X f(x)d\mu(x) = 
         \lim_{n\to \infty}\frac{1}{|F_n|} \sup_{x \in X}\sum_{\gamma \in F_n}f(T\gamma(x)).
     \end{equation}
     and
     \begin{equation}\label{eq:erg_max_avg_inf}
          \sup_{\mu \in \Prob(X,T)}\int_X f(x)d\mu(x) =
          \inf_{F \Subset \Gamma}
          \frac{1}{|F|}\sup_{x \in X}\sum_{\gamma \in F}f(T\gamma(x)).
     \end{equation}
\end{proposition}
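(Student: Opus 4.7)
The plan is to establish this as a standard variational principle by proving a cycle of inequalities. Writing $a(F) := \sup_{x \in X} \sum_{\gamma \in F} f(T_\gamma(x))$ for a finite set $F \Subset \Gamma$, I would show that
\[
\sup_{\mu \in \Prob(X,T)} \int_X f\,d\mu \;\le\; \inf_{F \Subset \Gamma} \frac{a(F)}{|F|} \;\le\; \liminf_{n \to \infty} \frac{a(F_n)}{|F_n|} \;\le\; \limsup_{n \to \infty} \frac{a(F_n)}{|F_n|} \;\le\; \sup_{\mu \in \Prob(X,T)} \int_X f\,d\mu,
\]
so that all four expressions coincide and $\lim_n a(F_n)/|F_n|$ exists. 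The middle inequality is trivial, so only the first and the last need genuine argument.

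The first inequality is the easy direction. For any $\mu \in \Prob(X,T)$ and any finite $F$, $T$-invariance of $\mu$ gives
\[
\int_X f\,d\mu \;=\; \frac{1}{|F|}\sum_{\gamma \in F}\int_X f \circ T_\gamma\,d\mu \;=\; \frac{1}{|F|}\int_X \sum_{\gamma \in F} f(T_\gamma(x))\,d\mu(x) \;\le\; \frac{a(F)}{|F|},
\]
and taking the supremum over $\mu$ and the infimum over $F$ finishes the bound.

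For the last inequality, which is the main point, I would use the classical empirical measure construction. Since $X$ is compact and $f$ is upper semicontinuous, $f$ is bounded and $a(F_n)$ is attained at some $x_n \in X$. Form the empirical measure $\mu_n := \frac{1}{|F_n|}\sum_{\gamma \in F_n}\delta_{T_\gamma(x_n)}$, noting $\int f\,d\mu_n = a(F_n)/|F_n|$. By weak-$*$ compactness of $\Prob(X)$ I can pass to a subsequence $(\mu_{n_k})$ converging weak-$*$ to some $\mu \in \Prob(X)$ and simultaneously realizing the $\limsup$ of $a(F_n)/|F_n|$. A short calculation using the F{\o}lner property ($|F_n \triangle \gamma_0 F_n|/|F_n|\to 0$ for each $\gamma_0 \in \Gamma$) shows that $\mu \in \Prob(X,T)$.

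The step that needs a bit of care — and which I expect to be the main technical obstacle — is passing from the empirical averages to $\int f\,d\mu$, since $f$ is only upper semicontinuous rather than continuous. I would handle this by writing $f$ as a decreasing pointwise limit of continuous functions $f_k \downarrow f$ (available since $f$ is bounded and upper semicontinuous on a compact metric space); then for each fixed $k$, $\int f_k\,d\mu_{n_k} \to \int f_k\,d\mu$ by weak-$*$ convergence and $\int f\,d\mu_n \le \int f_k\,d\mu_n$, so $\limsup_k \int f\,d\mu_{n_k} \le \int f_k\,d\mu$, and then monotone convergence as $k \to \infty$ yields $\limsup_n a(F_n)/|F_n| \le \int f\,d\mu \le \sup_{\nu \in \Prob(X,T)}\int f\,d\nu$. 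Combining this with the easy direction closes the cycle and establishes both \eqref{eq:erg_max_avg_lim} and \eqref{eq:erg_max_avg_inf} simultaneously, the latter because the chain also shows $\inf_F a(F)/|F|$ equals the common value.
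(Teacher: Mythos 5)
Your proof is correct and takes a more self-contained route than the paper's. The paper first invokes the Ornstein--Weiss subadditivity lemma to get existence of the limit in \eqref{eq:erg_max_avg_lim}, then cites the Shearer-inequality result of \cite{MR3546663} to identify that limit with the infimum in \eqref{eq:erg_max_avg_inf}, proves the empirical-measure step only for \emph{continuous} $f$, and finally treats upper semicontinuous $f$ by a separate approximation argument (writing $f=\inf_m f_m$ and exchanging an infimum over $m$ with the infimum over $F$). Your cycle of inequalities obtains all three facts at once, so you never need Ornstein--Weiss, never need Shearer, and never need the separate $\inf_m$-exchange: the first two inequalities in your chain are formal, and the last is the empirical-measure construction pushed through for u.s.c.\ $f$ directly by the standard fact that $\mu_n\to\mu$ weak-$*$ implies $\limsup_n\int f\,d\mu_n\le\int f\,d\mu$ for bounded u.s.c.\ $f$. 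What your approach buys is a proof that is elementary and monolithic; what the paper's approach buys is the isolation of abstract lemmas (subadditivity, Shearer) that are reusable in other arguments. One cosmetic point you should fix before writing this up: in the u.s.c.\ step you overload the index $k$ both for the subsequence $(\mu_{n_k})$ and for the approximating functions $f_k$, producing expressions like ``$\int f_k\,d\mu_{n_k}\to\int f_k\,d\mu$'' with a dangling quantifier. Use distinct indices (say $\mu_{n_j}\to\mu$ and $f_k\downarrow f$), establish $\limsup_j\int f\,d\mu_{n_j}\le\int f_k\,d\mu$ for each fixed $k$, and then let $k\to\infty$ by monotone (dominated) convergence; the mathematics is otherwise sound.
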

In the literature, the quantity that appears in both sides of \Cref{eq:erg_max_avg_lim} and \Cref{eq:erg_max_avg_inf} is sometimes called the \emph{maximal ergodic average of the function $f$}. The study of problems related to the maximal ergodic averages  relates to a field in dynamics known as ``ergodic optimization'', see \cite{MR4000508}.
We include a proof for the reader's convenience.

\begin{proof}
     For a finite set $F\Subset \Gamma$ and $f:X \to [0,+\infty)$ denote
     \[
     M(F,f) = \sup_{x \in X}\sum_{\gamma \in F}f(T_\gamma(x))
     \]
     The existence of the limit on the right-hand-side of \Cref{eq:erg_max_avg_lim} is a consequence of the Ornstien-Weiss subadditivity lemma and subadditivity and $\Gamma$-invariance of the function 
     $F \mapsto M(F,f)$. 
     See \cite[Theorem 6.1]{MR1749670} for a precise statement and discussion.
     
     Furthermore, it is an relatively standard  exercise to check that  the function  $F \mapsto M(F,f)$ satisfies Shearer inequality in the sense of \cite[Definition 2.1]{MR3546663}, so by \cite[Proposition 3.3]{MR3546663} the expression on the right side of \Cref{eq:erg_max_avg_lim} is equal to the expression on the right side of \Cref{eq:erg_max_avg_inf}.

    Also, for any non-empty $F \in \Gamma$ any Borel $f:X \to [0,+\infty)$ and any $\mu \in \Prob(X,T)$ we have
    \[
    \int_X f(x)d\mu(x) = \int_X \frac{1}{|F|}\sum_{\gamma \in F}f(T_\gamma(x))d\mu(x) \le M(F,f),
    \]
    so 
    \[
    \sup_{\mu \in \Prob(X,T)}\int_X f(x)d\mu(x) \le \inf_{F \Subset \Gamma}M(F,f).
    \]

    This shows that the inequality ``$\le$'' holds in \Cref{eq:erg_max_avg_lim} and \Cref{eq:erg_max_avg_inf}.

    Now suppose that $f:X \to [0,\infty)$ is continuous.

    Take a sequence of points $(x_n)_{n=1}^\infty$ such that each $x_n$ maximizes the function 
    \[
    x \mapsto \frac{1}{|F_n|}\sum_{\gamma \in F_n}f(T_\gamma(x)).
    \]
    
    Let $\mu_n = \frac{1}{|F_n|}\sum_{\gamma \in F_n} \delta_{T_\gamma(x_n)} \in \Prob(X)$. By the choice of the points $x_n$ we have that $\int_X f(x)d\mu_n(x)=\frac{1}{|F_n|}M(F_n,f)$
    
    Passing to a subsequence using compactness of $\Prob(X)$, we can assume that
    the sequence $(\mu_n)_{n=1}^\infty$ converges and let $\mu = \lim_{n \to \infty}\mu_n$.

    Then 
    \[
    \int_X f(x) d\mu(x) = \lim_{n \to \infty}\int_X f(x) d\mu_n(x) =   \lim_{n \to \infty} \frac{1}{|F_n|}M_n(F_n,f).
    \]

    This shows that for any continuous $f:X \to [0,+\infty)$
    we have 
    \[
    \sup_{\mu \in \Prob(X,T)}\int_X f(x) d\mu(x) \ge \lim_{n \to \infty} \frac{1}{|F_n|}M_n(F_n,f),
    \]

    So \Cref{eq:erg_max_avg_lim} and \Cref{eq:erg_max_avg_inf} hold for any continuous $f:X \to [0,+\infty)$.

    Now let $f:X \to [0,+\infty)$ be upper semicontinuous. 
    To complete the proof of the proposition it remains to show that in this case we still have
    \begin{equation}\label{eq:max_meas_av_ge_max_inf_space_avg}
        \sup_{\mu \in \Prob(X,T)} \int_X f(x) d\mu \ge \inf_{F \Subset \Gamma} \frac{1}{|F|} M(F,f).
    \end{equation}

    Since $f$ is upper semicontinuous, exist a pointwise non-increasing sequence of continuous functions $f_m:X \to [0,+\infty)$  such that $f = \inf_{m \in \N} f_m$.

    So for every $m \in \N$
    \[
     \sup_{\mu \in \Prob(X,T)} \int_X f(x) d\mu \ge
      \sup_{\mu \in \Prob(X,T)} \int_X f_m(x) d\mu 
      = \inf_{F \Subset\Gamma} \frac{1}{F}M(F,f_m).
    \]
    Taking infimum over $m \in \N$, it follows that
    \[
    \sup_{\mu \in \Prob(X,T)} \int_X f(x) d\mu \ge 
    \inf_{m \in \N}\inf_{F \Subset \Gamma}\frac{1}{F}M(F,f_m).
    \]
    Note that
    \[
    \inf_{m \in \N}\inf_{F \Subset \Gamma}\frac{1}{F}M(F,f_m)=
    \inf_{F \Subset \Gamma}\inf_{m \in \N}\frac{1}{F}M(F,f_m)
    \]
    For every finite non-empty set $F \Subset \Gamma$ by monotonicity of  $f \mapsto \frac{1}{|F|}M(F,f)$ we have that 
    \[
    \inf_{m \in \N}\frac{1}{|F|}M(F,f_m) = \frac{1}{|F|}M(F,\inf_{m \in \N}f_m)= \frac{1}{|F|}M(F,f).
    \]
    We conclude the inequality \eqref{eq:max_meas_av_ge_max_inf_space_avg} holds.

\end{proof}

For any $\overline{\mathcal{U}} \in \overline{\Cov}(X)$, the function $x \mapsto \ord(\overline{\mathcal{U}},x)$ is upper semicontinuous so by \Cref{prop:upper_semi_erg_max_value} we have the following:
\begin{proposition}\label{prop:dim_amenable}
    Let $\Gamma$ be a finite or countable amenable group, let $(F_n)_{n=1}^\infty$ be a F\o lner sequence in $\Gamma$ and let $(X,T)$ be a $\Gamma$-dynamical system. Then for any $\overline{\mathcal{U}} \in \overline{\Cov(X)}$ the following holds:
    \begin{equation}\label{eq:ord_amenable_lim}
        \ord(\overline{\mathcal{U}},T) =\lim_{n \to \infty}\sup_{x \in X}\frac{1}{|F_n|}\sum_{\gamma \in F_n}\ord(\overline{\mathcal{U}},T_\gamma(x))
    \end{equation}
    and
    \begin{equation}\label{eq:ord_amenable_inf}
        \ord(\overline{\mathcal{U}},T) =\inf_{F \Subset \Gamma}\frac{1}{|F|}\sup_{x \in X}\sum_{\gamma \in F}\ord(\overline{\mathcal{U}},T_\gamma(x))
    \end{equation}
\end{proposition}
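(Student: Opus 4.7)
The plan is to apply \Cref{prop:upper_semi_erg_max_value} directly with $f(x) := \ord(\overline{\mathcal{U}}, x)$. Once the hypotheses are verified, both \eqref{eq:ord_amenable_lim} and \eqref{eq:ord_amenable_inf} are immediate specializations of \eqref{eq:erg_max_avg_lim} and \eqref{eq:erg_max_avg_inf}, given the definition $\ord(\overline{\mathcal{U}}, T) = \sup_{\mu \in \Prob(X,T)} \int_X \ord(\overline{\mathcal{U}}, x)\, d\mu(x)$.

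The only point requiring argument is that $f = \ord(\overline{\mathcal{U}}, \cdot)$ is an upper semicontinuous function taking values in $[0, +\infty)$. Non-negativity is immediate: since $\overline{\mathcal{U}}$ is a cover, every $x \in X$ lies in at least one $U \in \overline{\mathcal{U}}$, so $\sum_{U \in \overline{\mathcal{U}}} \mathbf{1}_U(x) \geq 1$ and hence $\ord(\overline{\mathcal{U}}, x) \geq 0$; and the finiteness of $\overline{\mathcal{U}}$ gives the upper bound $|\overline{\mathcal{U}}| - 1$. For upper semicontinuity, I would note that for each closed $U \subseteq X$ the indicator $\mathbf{1}_U$ is upper semicontinuous, since its superlevel sets $\{\mathbf{1}_U \geq \alpha\}$ equal $U$ for $0 < \alpha \leq 1$ and $X$ for $\alpha \leq 0$, both closed. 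A finite sum of upper semicontinuous functions is upper semicontinuous, and translation by the constant $-1$ preserves the property, so $x \mapsto -1 + \sum_{U \in \overline{\mathcal{U}}} \mathbf{1}_U(x)$ is upper semicontinuous.

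With these observations in place, \Cref{prop:upper_semi_erg_max_value} applied to $f$ yields \eqref{eq:ord_amenable_lim} and \eqref{eq:ord_amenable_inf} at once. There is no substantive obstacle here; the proposition is simply a translation of the previous one into the language of closed covers, with the required regularity of $f$ coming for free from the fact that indicators of closed sets are upper semicontinuous. I would emphasize in the write-up that the closedness of $\overline{\mathcal{U}}$ is essential: for a general open cover $\mathcal{U}$, the function $x \mapsto \ord(\mathcal{U}, x)$ is \emph{lower} semicontinuous and one does not obtain the sup-over-orbit identity directly, which is precisely why the statement is phrased for $\overline{\mathcal{U}} \in \overline{\Cov}(X)$.
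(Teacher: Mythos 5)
Your proof is correct and matches the paper's approach exactly: the paper likewise derives this result by applying \Cref{prop:upper_semi_erg_max_value} to $f(x) = \ord(\overline{\mathcal{U}}, x)$, observing that this function is upper semicontinuous. Your elaboration of the upper semicontinuity (indicators of closed sets are u.s.c., finite sums and constant shifts preserve it) and your closing remark on why closedness of the cover matters are both sound and merely spell out what the paper leaves implicit.
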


Consequently, if $(X,T)$ is a $\Gamma$-dynamical system and for some amenable group $\Gamma$,  using  for any $\mathcal{U} \in \Cov(X)$ we have
\[
\dim(\mathcal{U},T) = \inf_{\mathcal{V} \succeq \mathcal{U}} \inf_{F \Subset\Gamma}\frac{1}{|F|}\sup_{x \in X}\sum_{\gamma \in F}\ord(\mathcal{V},T_\gamma(x)),
\]
and for any F\o lner sequence $(F_n)_{n \in \N}$ of $\Gamma$ we have
\[
\dim(\mathcal{U},T) = \inf_{\mathcal{V} \succeq \mathcal{U}} \lim_{n \to \infty}\frac{1}{|F_n|}\sup_{x \in X}\sum_{\gamma \in F_n}\ord(\mathcal{V},T_\gamma(x)).
\]Using \Cref{lem:smaller_cover_munkres}, the infimum in the above equations can be taken either over all $\mathcal{V}$ in

\begin{theorem}\label{thm:mdim_le_dim}
       Let $\Gamma$ be a countable amenable group, and let $(X,T)$ be  a $\Gamma$-dynamical system. Then
    \[
    \mdim(X,T) \le \dim(X,T).
    \]
\end{theorem}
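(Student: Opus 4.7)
The plan is to show that for every open cover $\mathcal{U} \in \Cov(X)$, the quantity appearing in the mean-dimension formula \eqref{eq:mean_dim_def} is bounded above by $\dim(\mathcal{U},T)$, and then take suprema.

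First I would use \Cref{lem:ord_subadditive} inductively to obtain the following combinatorial fact: for every finite open cover $\mathcal{V} \in \Cov(X)$ and every finite subset $F \Subset \Gamma$, there exists $\mathcal{W} \in \Cov(X)$ with $\mathcal{W} \succeq \bigvee_{\gamma \in F}T_\gamma(\mathcal{V})$ and
\[
\ord(\mathcal{W},x) \le \sum_{\gamma \in F}\ord(T_\gamma(\mathcal{V}),x) = \sum_{\gamma \in F}\ord(\mathcal{V},T_{\gamma^{-1}}(x)) \quad \forall x \in X.
\]
This is a straightforward induction on $|F|$ starting from the two-cover case stated in \Cref{lem:ord_subadditive}.

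Next I would fix $\mathcal{U} \in \Cov(X)$ and an arbitrary refinement $\mathcal{V} \succeq \mathcal{U}$. Since $\bigvee_{\gamma \in F}T_\gamma(\mathcal{V}) \succeq \bigvee_{\gamma \in F}T_\gamma(\mathcal{U})$, the cover $\mathcal{W}$ produced above is eligible in the infimum appearing in the definition \eqref{eq:mean_dim_def} of $\mdim$. Taking the pointwise supremum in the displayed inequality therefore yields, for any F\o lner sequence $(F_n)_{n=1}^\infty$,
\[
\inf_{\mathcal{W} \succeq \bigvee_{\gamma \in F_n}T_\gamma(\mathcal{U})}\sup_{x \in X}\ord(\mathcal{W},x)\; \le\; \sup_{x \in X}\sum_{\gamma \in F_n^{-1}}\ord(\mathcal{V},T_\gamma(x)).
\]
Dividing by $|F_n|=|F_n^{-1}|$ and passing to the limit, the inequality persists.

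Now I would invoke the standard fact that whenever $(F_n)$ is a F\o lner sequence in $\Gamma$, so is $(F_n^{-1})$; thus \Cref{prop:dim_amenable} (applied with $F_n^{-1}$ as the F\o lner sequence, noting that $\ord(\mathcal{V},\cdot)$ is upper semicontinuous since $\mathcal{V}$ can be replaced by a closed refinement via \Cref{lem:smaller_cover_munkres} without increasing $\ord(\mathcal{V},\cdot)$) gives
\[
\lim_{n\to\infty}\frac{1}{|F_n|}\sup_{x \in X}\sum_{\gamma \in F_n^{-1}}\ord(\mathcal{V},T_\gamma(x)) \;=\; \ord(\mathcal{V},T).
\]
Combining the last two displays,
\[
\lim_{n\to\infty}\frac{1}{|F_n|}\inf_{\mathcal{W} \succeq \bigvee_{\gamma \in F_n}T_\gamma(\mathcal{U})}\ord(\mathcal{W}) \;\le\; \ord(\mathcal{V},T).
\]
Since $\mathcal{V}\succeq \mathcal{U}$ was arbitrary, the left-hand side is bounded by $\inf_{\mathcal{V}\succeq \mathcal{U}}\ord(\mathcal{V},T)=\dim(\mathcal{U},T)$, and finally taking the supremum over $\mathcal{U}\in\Cov(X)$ yields $\mdim(X,T)\le \dim(X,T)$.

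The main subtlety I anticipate is the careful bookkeeping in the first step: iterating \Cref{lem:ord_subadditive} has to be done so that the resulting $\mathcal{W}$ refines the joint refinement $\bigvee_{\gamma \in F}T_\gamma(\mathcal{V})$ (not just some weaker cover), which is needed to compare against the infimum in the mean-dimension definition. Apart from this and the minor bookkeeping with $F_n$ versus $F_n^{-1}$, the argument is essentially a one-line consequence of \Cref{lem:ord_subadditive} combined with the ergodic-average reformulation of $\ord(\mathcal{V},T)$ in \Cref{prop:dim_amenable}.
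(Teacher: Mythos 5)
Your proof is correct and follows essentially the same route as the paper's: both combine the iterated subadditivity from \Cref{lem:ord_subadditive} with the ergodic-average reformulation of $\ord(\mathcal{V},T)$ supplied by \Cref{prop:dim_amenable}, together with the closed-refinement trick to handle semicontinuity. The only cosmetic differences are that the paper argues in the threshold form ($\dim(X,T)<t\Rightarrow\mdim(X,T)<t$) using the infimum version \eqref{eq:erg_max_avg_inf} and the small-mesh characterization of $\mdim<t$, whereas you work directly with the F\o lner-limit definition \eqref{eq:mean_dim_def} and note that $(F_n^{-1})$ is again a F\o lner sequence.
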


\begin{proof}
    We need to show that for every $t >0$,
    \[
    \dim(X,T) < t ~\Rightarrow~  \mdim(X,T) < t.
    \]
    Suppose that $\dim(X,T) <t$. Then for any $\epsilon >0$ there exists an open cover $\mathcal{V} \in \Cov(X)$ with $\mesh(\mathcal{V}) < \epsilon$ such that $\ord(\mathcal{V},T) < t$. This means that there exists a finite set $F \Subset \Gamma$ such that 
    \[
    \sup_{x \in X}\sum_{\gamma \in F}\ord(T_{\gamma}\mathcal{V},x) \le t |F|.
    \]
    By \Cref{lem:ord_subadditive} there exists $\mathcal{U} \in \Cov(X)$ such that
    $T_{\gamma}\mathcal{V} \preceq \mathcal{U}$ for all $\gamma \in F$ and so that for every $x \in X$
    \[
    \ord(\mathcal{U},x) \le \sum_{\gamma \in F}\ord(T_{\gamma}\mathcal{V},x) \le t |F|.
    \]
    Because $T_{\gamma}\mathcal{V} \preceq \mathcal{U}$ for all $\gamma \in F$, it follows that $\mesh(T_\gamma^{-1}(\mathcal{U})) < \epsilon$ for all $\gamma \in F$.
    As $\epsilon >0$ was arbitrary, this shows that $\mdim(X,T) < t$.
\end{proof}

We now recall the uniform Rokhlin property (URP), introduced in  \cite{MR4468009}.
A $\Gamma$-dynamical system $(X,T)$ has the \emph{uniform Rokhlin property (URP)} if for every finite $K \Subset \Gamma$ and every $\epsilon >0$ there exist open sets $V_1,\ldots,V_r \subseteq X$ and finite $(K,\epsilon)$-invariant subsets $S_1,\ldots,S_r \Subset \Gamma$ such that the collection of sets $\bigcup_{i=1}^r\{T_\gamma V_i\}_{\gamma \in S_i}$ are pairwise disjoint and 
\begin{equation}\label{eq:URP_castle_epsilon_full}
\sup_{\mu \in \Prob(X,T)}\mu \left( X \setminus \bigcup_{i=1}^r\bigcup_{\gamma \in S_i}T_{\gamma}V_i\right) < \epsilon. 
\end{equation}
It is obvious from the  definition that if a $\Gamma$-dynamical system $(X,T)$  has the uniform Rokhlin property then $\Gamma$ must be an amenable group. It follows from \cite{MR1793417} that any minimal $\Z$-dynamical system has the uniform Rokhlin property. See \cite[Lemma 3.6]{MR4468009} for details. More generally, the uniform Rokhlin property holds for any $\Z^k$-dynamical system that admits the so called \emph{marker property} \cite{MR4802735}. In particular, the class of system that admits the uniform Rokhlin property include any extension of a free minimal $\Z^k$-system.
We refer to  \cite{naryshkin2024urpcomparisonmeandimension} for further discussion of the uniform Rokhlin property and its significance.

Using standard arguments, we can replace the open sets in the definition of uniform Rokhlin property by closed sets. 
This is expressed by the lemma below (see eg. 
\cite{naryshkin2024urpcomparisonmeandimension} for a similar statement).
\begin{lemma}\label{lem:URP_closed_sets}
    Suppose that  $(X,T)$ is a $\Gamma$-dynamical system with the uniform Rokhlin property. Then for every finite $K \Subset \Gamma$ and every $\epsilon >0$ there exist \emph{closed} sets $V_1,\ldots,V_r \subseteq X$ and finite $(K,\epsilon)$-invariant subsets $S_1,\ldots,S_r \Subset \Gamma$ such that the collection of sets $\bigcup_{i=1}^r\{T_\gamma V_i\}_{\gamma \in S_i}$ are pairwise disjoint and \Cref{eq:URP_castle_epsilon_full} holds.
\end{lemma}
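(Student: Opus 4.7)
The plan is to apply the open-set version of the uniform Rokhlin property with a tighter tolerance, shrink each open set to one whose closure lies strictly inside, and then take closures. Pairwise disjointness of the tower and $(K,\epsilon)$-invariance of the shape-sets $S_i$ both transfer for free, so the only nontrivial step will be verifying that the measure condition survives the shrinking.

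Given $K \Subset \Gamma$ and $\epsilon > 0$, I would first apply the URP hypothesis with $\epsilon/2$ in place of $\epsilon$, obtaining open sets $U_1,\ldots,U_r \subseteq X$ and $(K,\epsilon/2)$-invariant finite sets $S_1,\ldots,S_r \Subset \Gamma$ such that $\{T_\gamma U_i\}_{1 \le i \le r,\, \gamma \in S_i}$ is pairwise disjoint and $\capacity(R,T) < \epsilon/2$, where $R := X \setminus \bigcup_{i,\gamma} T_\gamma U_i$. Since $(K,\epsilon/2)$-invariance implies $(K,\epsilon)$-invariance, the $S_i$ are already of the required form. For each $n \in \mathbb{N}$ I would define the open set
\[
U_i^{(n)} := \{x \in U_i : d(x, X \setminus U_i) > 1/n\},
\]
so that $\overline{U_i^{(n)}} \subseteq U_i$ and $U_i^{(n)} \uparrow U_i$. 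The candidate closed sets will be $V_i := \overline{U_i^{(n)}}$ for $n$ sufficiently large; the inclusion $V_i \subseteq U_i$ is automatic, and pairwise disjointness of $\{T_\gamma V_i\}_{i,\gamma \in S_i}$ follows immediately from pairwise disjointness of $\{T_\gamma U_i\}_{i,\gamma \in S_i}$.

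The hard part will be choosing $n$ so that $\sup_\mu \mu(X \setminus \bigcup T_\gamma V_i) < \epsilon$. I put $L_n := X \setminus \bigcup_{i,\gamma \in S_i} T_\gamma U_i^{(n)}$, which is closed, contains $X \setminus \bigcup T_\gamma V_i$, and satisfies $L_n \downarrow R$ as $n \to \infty$. The central claim will be that $\capacity(L_n,T) \to \capacity(R,T)$. To prove it I would observe that, since each $L_n$ is closed, $\mu \mapsto \mu(L_n)$ is upper semicontinuous on the compact space $\Prob(X,T)$, so the supremum defining $\capacity(L_n,T)$ is attained at some $\mu_n$. After passing to a weak-$*$ convergent subsequence $\mu_{n_k} \to \mu^*$, for any fixed $m$ and all $n_k \ge m$ monotonicity gives $\mu_{n_k}(L_{n_k}) \le \mu_{n_k}(L_m)$, and upper semicontinuity of $\mu \mapsto \mu(L_m)$ yields $\limsup_k \capacity(L_{n_k},T) \le \mu^*(L_m)$. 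Letting $m \to \infty$ and applying monotone convergence along $L_m \downarrow R$ should give $\limsup_k \capacity(L_{n_k},T) \le \mu^*(R) \le \capacity(R,T)$, and a standard subsequence argument then yields $\capacity(L_n,T) \to \capacity(R,T) < \epsilon/2$. Finally, choosing $n$ with $\capacity(L_n,T) < \epsilon$ would complete the proof, since $\sup_\mu \mu(X \setminus \bigcup T_\gamma V_i) \le \capacity(L_n,T) < \epsilon$.
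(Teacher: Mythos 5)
Your argument is correct and follows essentially the same strategy as the paper's: shrink each open tower base $U_i$ to a slightly smaller open set whose closure lies inside $U_i$, take closures, and then verify that the capacity of the leftover region stays below $\epsilon$. The only difference is cosmetic: where the paper bounds the error region $\bigcup T_\gamma(\partial_\eta U_i)$ inside a $\delta$-neighborhood of $R$ and invokes \Cref{lem:cap_continuous} directly, you re-derive the nested-closed-sets version of that capacity-continuity statement from scratch via upper semicontinuity and weak-$*$ compactness of $\Prob(X,T)$ (which is in fact the same argument underlying \Cref{lem:cap_continuous}); you could shorten your proof by observing that $L_n \downarrow R$ with $L_n$ compact forces $L_n \subseteq B_\delta(R)$ for $n$ large, so \Cref{lem:cap_continuous} applies verbatim.
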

\begin{proof}
Choose a finite set  $K \Subset \Gamma$ and $\epsilon >0$.
Then the  uniform Rokhlin property $(X,T)$ implies that there exists open sets $U_r,\ldots,U_r \subseteq X$ and finite $(K,\epsilon)$-invariant subsets $S_1,\ldots,S_r \Subset \Gamma$ such that the collection of sets $\bigcup_{i=1}^r\{T_\gamma U_i\}_{\gamma \in S_i}$ are pairwise disjoint and 
\[
\sup_{\mu \in \Prob(X,T)}\mu \left( X \setminus \bigcup_{i=1}^r\bigcup_{\gamma \in S_i}T_{\gamma}U_i\right) < \epsilon. 
\]
By \Cref{lem:cap_continuous} we can find  $\delta >0$ such that 
\[
\sup_{\mu \in \Prob(X,T)}\mu \left(B_\delta( X \setminus \bigcup_{i=1}^r\bigcup_{\gamma \in S_i}T_{\gamma}U_i) \right) < \epsilon. 
\]
For $\eta >0 $ and $U \subseteq X$ let 
\[
\partial_\eta U =\{ x \in X~:~ B_\eta(x)\cap U \ne \emptyset \mbox { and } B_\eta(x) \cap (X\setminus U) \ne \emptyset\}.
\]
Then by (uniform) continuity of each $T_\gamma$ we can find $\eta >0$ such that for every $1\le i \le r$ and every $\gamma \in S_i$ 
\[
T_\gamma(\partial_\eta U_i) \subseteq B_\delta( X \setminus \bigcup_{i=1}^r\bigcup_{\gamma \in S_i}T_{\gamma}U_i).
\]
For every $1\le i \le d$ let $V_i = \overline{U_i \setminus \partial_\eta U_i}$. Then $V_1,\ldots, V_r$ are closed  such that the collection of sets $\bigcup_{i=1}^r\{T_\gamma V_i\}_{\gamma \in S_i}$ are pairwise disjoint and \Cref{eq:URP_castle_epsilon_full} holds. 
\end{proof}

\begin{theorem}\label{thm:mdim_equal_dim_marker_property}
    Let $\Gamma$ be a countable amenable group and let $(X,T)$ be a $\Gamma$-dynamical system with the uniform Rokhlin property.
    Then 
    $\mdim(X,T)=\dim(X,T)$.  
\end{theorem}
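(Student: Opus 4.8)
I would prove the missing inequality $\dim(X,T) \le \mdim(X,T)$, the reverse being \Cref{thm:mdim_le_dim}. Fix $t > \mdim(X,T)$. Since every open cover of $X$ is refined by every sufficiently fine closed cover, and $\dim(\mathcal{U}_1,T) \le \ord(\overline{\mathcal{W}},T)$ for such a refinement (closed covers being admissible here by \Cref{lem:smaller_cover_munkres}, after a harmless open thickening), it is enough to produce, for every $\epsilon>0$ and $\eta>0$, a finite closed cover $\overline{\mathcal{W}}$ of $X$ with $\mesh(\overline{\mathcal{W}})<\epsilon$ and $\ord(\overline{\mathcal{W}},T)\le t+\eta$. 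Granting this for all $\epsilon,\eta$ gives $\dim(X,T)\le t$, and then $t\downarrow\mdim(X,T)$ finishes the proof.

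First fix a fine open cover $\mathcal{U}_0$ with $\mesh(\mathcal{U}_0)<\epsilon$. By \Cref{lem:ord_subadditive} the function $F\mapsto\dim\big(\bigvee_{\gamma\in F}T_\gamma\mathcal{U}_0\big)$ is subadditive and invariant under left translations, so the Ornstein--Weiss lemma together with $\mdim(X,T)<t$ yields a finite $K\Subset\Gamma$ and $\delta_0>0$ such that every $(K,\delta_0)$-invariant $F\Subset\Gamma$ satisfies $\dim\big(\bigvee_{\gamma\in F}T_\gamma\mathcal{U}_0\big)<|F|\,t$. Now invoke the uniform Rokhlin property in the closed form \Cref{lem:URP_closed_sets}, with this $K$ and a small $\delta\le\delta_0$, to get closed $V_1,\dots,V_r\subseteq X$ and $(K,\delta_0)$-invariant shapes $S_1,\dots,S_r\Subset\Gamma$ with $\{T_\gamma V_i\}_{1\le i\le r,\ \gamma\in S_i}$ pairwise disjoint and $\sup_{\mu\in\Prob(X,T)}\mu\big(X\setminus\bigcup_{i,\gamma}T_\gamma V_i\big)<\delta$; replacing $S_i$ by a right translate and $V_i$ correspondingly we may assume $e\in S_i$. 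For each $i$ choose an open cover $\mathcal{V}_i\succeq\bigvee_{\gamma\in S_i}T_\gamma\mathcal{U}_0$ with $\ord(\mathcal{V}_i)<|S_i|\,t$; since $\mathcal{V}_i\succeq T_e\mathcal{U}_0=\mathcal{U}_0$ we get $\mesh(\mathcal{V}_i)<\epsilon$.

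The heart of the argument is to turn the pointwise bound $\ord(\mathcal{V}_i)<|S_i|\,t$ into an \emph{averaged} one, using the tower to spread the high-order locus of $\mathcal{V}_i$ around. Apply \Cref{lem:ostrand_kolmogorov_covers} to $\mathcal{V}_i$ with $k=|S_i|$: this produces pairwise-disjoint closed families $\mathcal{C}^i_0,\dots,\mathcal{C}^i_{|S_i|-1}$, each refining $\mathcal{V}_i$ (hence of mesh $<\epsilon$), with $\sum_j\ord(\mathcal{C}^i_j,x)\ge-\ord(\mathcal{V}_i,x)>-|S_i|\,t$ for every $x$. Build $\overline{\mathcal{W}}$ by distributing the families $\mathcal{C}^i_j$ over the $|S_i|$ levels $T_\gamma V_i$ of the $i$-th tower, placing at each level the restriction to that closed level of one family, the assignment of families to levels being chosen (using the Ostrand--Kolmogorov inequality, and for $t\ge1$ and general $\Gamma$ a combinatorial argument in the spirit of Ornstein--Weiss quasi-tilings) so that the resulting partial cover of the tower misses as little mass as possible; then patch the missed part, the level boundaries, and $X\setminus\bigcup_{i,\gamma}T_\gamma V_i$ by (pulled-back, intersected) elements of $\mathcal{U}_0$ and, where needed, by a generically chosen map into a low-dimensional nerve of $\mathcal{V}_i$ --- invoking \Cref{lem:separate} and the brickwall cover \Cref{lem:cube_cover_R_d} to keep the order of the patch controlled, exactly as in the proof of \Cref{thm:dim_finite_group_action}. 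Every set used has diameter $<\epsilon$, so $\mesh(\overline{\mathcal{W}})<\epsilon$.

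To estimate $\ord(\overline{\mathcal{W}},T)=\sup_{\mu\in\Prob(X,T)}\int\ord(\overline{\mathcal{W}},x)\,d\mu$ I would use \Cref{prop:dim_amenable}, which identifies this with an infimum of normalized maximal orbit sums, together with: invariance of $\mu$, so that each level $T_\gamma V_i$ carries mass $\mu(V_i)$; disjointness of levels, so that over a ``column'' the placed families, being pairwise disjoint, contribute $0$ to the order off the patched part; the Ostrand--Kolmogorov inequality, which for each invariant $\mu$ bounds the mass of the patched part of the $i$-th tower by $\tfrac{1}{|S_i|}\int\ord(\mathcal{V}_i,x)\,d\mu\le t\,\mu\big(\bigcup_{\gamma\in S_i}T_\gamma V_i\big)$; and the uniform Rokhlin bound together with \Cref{lem:cap_continuous} to control the contributions of the level boundaries and of $X\setminus\bigcup_{i,\gamma}T_\gamma V_i$ by a quantity tending to $0$ with $\delta$. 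Summing over $i$ and optimizing $\delta$ and the generic choices gives $\ord(\overline{\mathcal{W}},T)\le t+\eta$. The main obstacle --- and the step where the uniform Rokhlin property is genuinely needed --- is precisely this last accounting: engineering the distribution of the Ostrand--Kolmogorov families and the patching so that the order of $\overline{\mathcal{W}}$ is controlled \emph{uniformly over all invariant measures} at the scale $t$ per unit, rather than merely $|S_i|\,t$ over the whole tower; this is where one must exploit that the uniform Rokhlin towers exhaust $X$ uniformly over $\Prob(X,T)$, and where the combinatorics of assigning the disjoint families to the levels (and, for large $t$, of using several families per level) does the real work.
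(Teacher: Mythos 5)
Your overall architecture matches the paper's: reduce to producing, for each $\epsilon,\eta$, a fine closed cover of controlled $\ord(\cdot,T)$; get $(K,\delta)$-invariant shapes from $\mdim<t$; take closed uniform Rokhlin towers with those shapes; run Ostrand--Kolmogorov on the per-tower covers; and do the final accounting via invariance of $\mu$, disjointness of the levels, and \Cref{lem:cap_continuous} for the residual set. That part is sound.

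The gap is in the step you yourself call the heart of the argument: the actual construction of $\overline{\mathcal{W}}$. As described --- place one disjoint closed family $\mathcal{C}^i_j$ per level $T_\gamma V_i$, then ``patch the missed part \ldots by elements of $\mathcal{U}_0$ and, where needed, by a generically chosen map into a low-dimensional nerve'' --- this does not yield a cover of controlled order. The placed families are pairwise disjoint collections, not covers, so their union misses a set whose fibre-wise size is exactly what the Ostrand--Kolmogorov inequality fails to cover; patching that set with elements of $\mathcal{U}_0$ reintroduces order as large as $\ord(\mathcal{U}_0)$ on a set whose capacity is of order $t$ per level, which destroys the bound $t+\eta$; and with only $k=|S_i|$ families (one per level, each contributing $\ord\in\{-1,0\}$) the inequality $\sum_j\ord(\mathcal{C}^i_j,x)\ge -|S_i|t$ is vacuous as soon as $t\ge 1$, which you acknowledge but defer to an unspecified quasi-tiling argument. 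The paper's proof avoids all of this by never assembling the cover level-by-level and never patching: it takes $d|S_i|$ Ostrand--Kolmogorov families indexed by $(\gamma,\ell)\in S_i\times\{1,\dots,d\}$, transports them to a single disjoint collection $\mathcal{D}_n$, chooses (via \Cref{lem:separate}) one generic map $f:X\to[0,1]^d$ that avoids the brickwall value sets $A_\ell$ on $\bigcup\mathcal{D}_n$ while still having small fibres, and defines the cover globally as $\overline{\mathcal{W}}=f^{-1}(\mathcal{P})$ for the brickwall cover $\mathcal{P}$ of $[0,1]^d$. This is automatically a closed cover of mesh $<\epsilon$, and \Cref{lem:cube_cover_R_d} bounds $\ord(\overline{\mathcal{W}},x)$ coordinate-by-coordinate by $-\ord(\mathcal{D}_n,x)$, which the Ostrand--Kolmogorov inequality then converts into the per-column bound $|S_{i,n}|t$. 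That global pullback construction is the missing idea; without it your sketch does not close.
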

\begin{proof}
Let $(X,T)$ be a dynamical system with the uniform Rokhlin property.
By \Cref{thm:mdim_le_dim}, we only need to prove the inequality  $\dim(X,T)\le \mdim(X,T)$.
Suppose that $t \in \mathbb{R}$ and $\mdim(X,T) < t$.
We need to show that $\dim(X,T) <t$. It suffices to show that for any $\epsilon,\eta >0$ there exists a finite closed cover $\mathcal{V}$ of $X$ such that $\mesh(\mathcal{V}) < \epsilon$ and $\ord(\mathcal{V},T) < t+\eta$.
So let $\epsilon,\eta > 0$ be given.

Since $X$ is a compact metric space, there exist a sequence of continuous function $f_1,f_2,\ldots,f_n,\ldots:X \to [0,1]$ so that $\overline{f}=(f_1,\ldots,f_n,\ldots):X \to [0,1]^\mathbb{N}$ is a topological embedding.
    By compactness we can find $d \in \mathbb{N}$ such that  $f=(f_1,\ldots,f_d):X \to [0,1]^d$ satisfies that  the diameter of  $f^{-1}(\{x\})$ is less than  $\epsilon$ for any $x \in X$. Again by compactness of $X$ there exists $\delta>0$ such that the pre-image  of any $\delta$-Ball in $[0,1]^d$ has diameter less than $\epsilon$.
We can thus fix  $d \in \mathbb{N}$ and $\delta >0$ such that the set of continuous functions $f:X\to [0,1]^d$ with the property that the  inverse image under $f$ of any $\delta$-Ball in $[0,1]^d$ has diameter less than $\epsilon$ is a non-empty open set. 

Because $\mdim(X,T) < t$, for any $n \in \mathbb{N}$ there exist
$\delta_n >0$ and a finite set $K_n \Subset \Gamma$ such that for any $(K_n,\delta_n)$-invariant set $F_n \Subset \Gamma$ 
there exist an open cover $\mathcal{U}_n \in \Cov(X)$ such that $\mesh(T_\gamma (\mathcal{U}))<\frac{1}{n}$ for all $\gamma \in F_n$ and  
 \[
 \ord(\mathcal{U}) \le |F_n| t.
 \]

By the uniform Rokhlin property of $(X,T)$ and \Cref{lem:URP_closed_sets} for every $n \in \N$ there  exist closed sets $V_{1,n},\ldots, V_{r_n,n} \subset X$ and finite $(K_n,\delta_n)$-invariant subsets
$S_{1,n},\ldots,S_{r_n,n}$ so that $(T_\gamma V_{i,n})_{1\le i \le r_n,~ \gamma \in S_{i,n}}$ are pairwise disjoint and 
\[\sup_{\mu \in \Prob(X,T)} \mu\left( X\setminus \bigcup_{i=1}^{r_n}\bigcup_{\gamma \in S_{i,n}}T_\gamma V_{i,n} \right)<\frac{1}{n}.\]

By the choice of $(K_n,\delta_n)$, the fact that the sets $S_{i,n}$ are  $(K_n,\delta_n)$-invariant for every $1\le i \le r_n$ there exists an open cover $\mathcal{U}_{i,n} \in \Cov(X)$ such that  $\mesh(T_\gamma (\mathcal{U}_{i,n}))<\frac{1}{n}$ for all $\gamma \in S_i$ and  
 \[
 \ord(\mathcal{U}_{i,n}) \le |S_{i,n}| t.
 \]

By \Cref{lem:ostrand_kolmogorov_covers},  for every $1\le i \le d$
there exists families $\{\mathcal{C}_{i,n,\gamma,\ell}\}_{ \gamma \in S_i,~ 1\le \ell \le d}$ such that each family $\mathcal{C}_{i,n,\gamma,\ell}$ consists of pairwise disjoint closed subset such that 
$\mathcal{U}_{i,n} \preceq \mathcal{C}_{i,n,\gamma,\ell}$

and so that for every $x \in X$ and every $1\le i \le r$

\begin{equation}\label{eq:C_i_gamma_ell_bound}
\sum_{\ell=1}^d \sum_{\gamma \in S_i} \overline{\mathbf{1}}_{{C}_{i,n,\gamma, \ell}}(x) \le   |S_{i,n}|t.  
\end{equation}

Because  $\mesh(T_\gamma (\mathcal{U}_{i,n}))<\frac{1}{n}$ for all $\gamma \in S_{i,n}$, it follows that $\mesh( T_g(\mathcal{C}_{i,n,\gamma, \ell})) < \frac{1}{n}$ for every $g \in S_{i,n}$.
 
Consider the collection of sets 
\[
\mathcal{D}_n = \bigcup_{i=1}^r\bigcup_{\gamma \in S_{i,n}}\bigcup_{\ell=1}^d \left\{ T^{-1}_{\gamma}C \cap T^{-1}_\gamma(V_i) ~:~ C \in \mathcal{C}_{i,n,\gamma,\ell} \right\}.
\]
Then  $\mathcal{D}_n$ is a collection of pairwise disjoint closed sets with $\mesh(\mathcal{D}_n)< \frac{1}{n}$.
Also, for any $x \in V_i$ we have that
\begin{equation}\label{eq:ord_D_n}
    \sum_{\gamma \in S_{i,n}}\overline{\mathbf{1}}_{\mathcal{D}_n}(T^{-1}_\gamma(x)) = \sum_{\ell=1}^d\sum_{\gamma \in S_{i,n}}\overline{\mathbf{1}}_{\mathcal{C}_{i,n,\gamma,\ell}}(x).
\end{equation}

By \Cref{lem:cube_cover_R_d}  
there exists a cover $\mathcal{P}$ of $[0,1]^d$ by closed 
set with $\mesh(\mathcal{P})< \delta$ and pairwise disjoint, finite subsets $(A_{\ell})_{1 \le \ell \le d}$ such 
that for every $v \in [0,1]^d$ we have
\begin{equation}\label{eq:ord_P_upper_bound}
\ord(\mathcal{P},v) \le \sum_{\ell=1}^d\mathbf{1}_{A_{\ell}}(v).    
\end{equation}

Let $A =\bigcup_{\ell=1}^dA_{\ell,m}$.
Then $A \subset [0,1]$ is a finite set.
By \Cref{lem:separate} for  there exists a dense $G_\delta$ set of continuous functions $f=(f_1,\ldots,f_d):X \to [0,1]^d$ such that for infinitely many $n$'s we have that 
$f_i(x) \not \in A$ for all $x \in \bigcup_{D \in \mathcal{D}_n}D$.
Because the for every $m$ the sets $A_{1},\ldots,A_{d}$ are pairwise disjoint, the above argument means that there is a dense set $G_\delta$ of continuous functions $f:X \to [0,1]^d$ such that for infinitely many $n$'s  
\begin{equation}\label{eq:sum_A_ell_le_ord_D_n}
    \sum_{\ell=1}^d \mathbf{1}_{A_{\ell}}(f_\ell(x)) \le \overline{\mathbf{1}}_{\mathcal{D}_{n}}(x) \mbox{ for all } x \in X.
\end{equation} 
By our choice of $d$ and $\delta>0$, there is a non-empty open set of functions so that furthermore the  inverse image under $f$ of any $\delta$-Ball in $[0,1]^d$ has diameter less than $\epsilon$.

We can thus find a continuous function $f:X \to [0,1]^d$ that satisfies the above and also  \ref{eq:sum_A_ell_le_ord_D_n} for infinitely many $n$'s and so that that .
Let $\mathcal{A}=f^{-1}(\mathcal{P})$. It follows that $\mathcal{A} \in \overline{Cov}(X)$ and $\mesh(\mathcal{A}) < \epsilon$.

Choose any $\mu \in \Prob(X,T)$.
Our goal is to show that $\int_X \ord(\mathcal{A},x) d\mu(x)\le t$ 
for every $n \in \N$ let  \[R_n= X \setminus \bigcup_{i=1}^{r_n}\bigcup_{\gamma \in S_{i,n}}T_\gamma(V_{i,n}).\]
Because the sets $(T_\gamma V_{i,n})_{1 \le i \le r_n,\gamma \in S_{i,n}}$ are pairwise disjoint we have:
\[
\int_X \ord(\mathcal{A},x) d\mu(x) = \int_{R_n} \ord(\mathcal{A},x) d\mu(x) + \sum_{i=1}^{r_n} \sum_{\gamma \in S_{i,n}}\int_{T_\gamma S_i}\ord(\mathcal{A},x) d\mu(x).
\]
By \eqref{eq:ord_P_upper_bound}  for every $x \in X$ we have 
\begin{equation}\label{eq:ord_A_le_sim_A_ell}
\ord(\mathcal{A},x) \le \sum_{\ell=1}^d\mathbf{1}_{A_{\ell}}(f_\ell(x)).     
\end{equation}

In particular, $\ord(\mathcal{A},x) \le d$ for any $x \in X$,
so
\[
\int_{R_n} \ord(\mathcal{A},x) d\mu(x) \le d\cdot \capacity(R_n,T) \to 0 \mbox{ as } n\to +\infty.
\]

We will conclude the proof by showing that there exists infinitely many $n$'s so that  
\[
\sum_{i=1}^{r_n} \sum_{\gamma \in S_{i,n}}\int_{T_\gamma V_i}\ord(\mathcal{A},x) d\mu(x) \le t, 
\]

Because  $\mu$ is $T$-invariant we have:
\[
\sum_{i=1}^{r_n} \sum_{\gamma \in S_{i,n}}\int_{T_\gamma V_i}\ord(\mathcal{A},x) d\mu(x)=
\sum_{i=1}^{r_n}\int_{V_i} \sum_{\gamma \in S_{i,n}}\ord(\mathcal{A},T_\gamma^{-1}(x))d\mu(x).
\]
By \eqref{eq:ord_A_le_sim_A_ell} and \eqref{eq:sum_A_ell_le_ord_D_n} there are infinitely many $n$'s so that  for any $1 \le i \le r_n$ and $x \in V_i$ we have
\[
\sum_{\gamma \in S_{i,n}}\ord(\mathcal{A},T_\gamma^{-1}(x)) \le
\sum_{\gamma \in S_{i,n}}\sum_{\ell=1}^d \mathbf{1}_{A_\ell}(f(T_\gamma(x))_\ell) \le
\sum_{\ell=1}^d\overline{\mathbf{1}}_{\mathcal{D}_n}(T_\gamma^{-1}(x)).
\]
So using \Cref{eq:ord_D_n}  there are infinitely many $n$'s so that  for every $x \in V_{i,n}$
\[
\sum_{\gamma \in S_{i,n}}\ord(\mathcal{A},T_\gamma^{-1}(x)) \le
\sum_{\ell=1}^d\sum_{\gamma \in S_{i,n}}\overline{\mathbf{1}}_{\mathcal{C}_{i,n,\gamma,\ell}}(x).\]
By \Cref{eq:C_i_gamma_ell_bound} we thus have that there are infinitely many $n$'s so that for any $x \in V_{i,n}$
\[
\sum_{\gamma \in S_{i,n}}\ord(\mathcal{A},T_\gamma^{-1}(x)) \le |S_{i,n}|t.
\]
It follows that there are infinitely many $n$'s so that
\[
\int_{V_{i,n}} \sum_{\gamma \in S_{i,n}}\ord(\mathcal{A},T_\gamma^{-1}(x))d\mu(x) \le |S_{i,n}|t \mu(V_{i,n}).
\]
Summing over $1\le i \le r_n$ we get
\[
\sum_{i=1}^{r_n} \int_{V_{i,n}} \sum_{\gamma \in S_{i,n}}\ord(\mathcal{A},T_\gamma^{-1}(x))d\mu(x) \le t \left(\sum_{i=1}^{r_n} |S_{i,n}|\mu(V_{i,n})\right) \le t,
\]
where in the right inequality we used that $(T_{\gamma}(V_{i,n}))_{1\le i \le r_n,\gamma \in S_{i,n}}$ are pairwise disjoint.
\end{proof}

\section{An almost embedding theorem}\label{sec:almost_emb}

In this section we prove a weak form of shift embeddability, assuming only that $\dim(X,T) < +\infty$. This result will allow us to deduce some interesting positive results regarding the classical shift embeddability problem.

Let $(X,T)$ and $(Y,S)$ be $\Gamma$-dynamical systems. An \emph{almost  embedding} is a  equivariant map $f:X \to Y$ with the property that for every $\mu \in \Prob(X,T)$ the function $f:X\to Y$ induces a measure-theoretic isomorphism of $(X,T,\mu)$ and $(Y,S,f_* \mu)$.  In this case we say that that $(X,T)$ \emph{almost embeds} into $(Y,S)$. 
In the terminology of Downarowicz and Glasner \cite{MR3586277}, if $(X,T)$ almost embeds into $(Y,S)$, then $(X,T)$ is an \emph{isomorphic extension} of its image in $Y$. In the terminology of Kerr and Szab\'{o} \cite{MR4066584}, in this situation $(X,T)$ is \emph{measure-isomorphic} to its image in $Y$. 

To formulate and prove an alternative   characterization of almost embeddings, we now recall the notion of the fiber product (sometimes also called relative product). 
Given an equivariant continuous map $f:X \to Y$ between dynamical systems    $(X,T)$ and $(Y,S)$, the  \emph{fiber product} of $X$ over $f$ is given by
\[
X \times_f X= \left\{ (x_1,x_2) \in X \times X : f(x_1)=f(x_2) \right\}.
\]
Evidently,  $X \times_f X$ is a closed, $T \times T$-invariant subset of $X \times X$.

We always have $\Delta_X \subseteq X \times_f X$, where
\[
\Delta_X = \left\{
(x,x) ~:~ x \in X
\right\}.
\]

The following result gives alternative characterization of almost  embeddings. 

\begin{proposition}\label{prop:almost_emb_char}
    Let $(X,T)$ and $(Y,S)$ be $\Gamma$-dynamical systems and let  $f:X \to Y$ be a continuous equivariant map. The following are equivalent:
    \begin{enumerate}
        \item $f:X \to Y$ is an almost embedding.
        \item Any $(T \times T)$-invariant probability measure on the fiber product $X \times_f X$ is supported on the diagonal $\Delta_X \subseteq X \times_f X$. 
    \end{enumerate}
\end{proposition}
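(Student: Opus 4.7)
The plan is to use disintegration of measure for one direction and a measurable inverse for the other, exploiting the elementary fact that a measure-preserving map of standard probability spaces is essentially injective if and only if its ``relative independent self-joining'' is concentrated on the diagonal.

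For the implication $(2) \Rightarrow (1)$, I would take any $\mu \in \Prob(X,T)$ and disintegrate it over the push-forward $f_*\mu \in \Prob(Y,S)$, writing $\mu = \int_Y \mu_y \, d(f_*\mu)(y)$ with $\mu_y$ concentrated on $f^{-1}(y)$ for $f_*\mu$-a.e.\ $y$. By equivariance of $f$ and uniqueness of disintegration, $(T_\gamma)_* \mu_y = \mu_{S_\gamma y}$ for $f_*\mu$-a.e.\ $y$, for every $\gamma \in \Gamma$. Define the relative independent self-joining
\[
\nu = \int_Y \mu_y \otimes \mu_y \, d(f_*\mu)(y),
\]
which is a Borel probability measure on $X \times X$ supported on $X \times_f X$. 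A short computation using the equivariance of the disintegration together with the $S$-invariance of $f_*\mu$ shows that $\nu$ is $(T\times T)$-invariant. By hypothesis, $\nu$ is then concentrated on $\Delta_X$, which forces $\mu_y \otimes \mu_y$ to be supported on the diagonal for $f_*\mu$-a.e.\ $y$; equivalently, $\mu_y$ is a Dirac mass $\delta_{x_y}$ for such $y$. The map $y \mapsto x_y$ is a Borel (measurable) section of $f$ defined $f_*\mu$-a.e., and provides a measure-theoretic inverse, establishing that $f$ induces an isomorphism $(X,T,\mu) \cong (Y,S,f_*\mu)$.

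For $(1) \Rightarrow (2)$, suppose $\nu$ is any $(T\times T)$-invariant probability measure on $X \times_f X$. Let $\pi_1,\pi_2: X \times_f X \to X$ be the coordinate projections and set $\mu_i = (\pi_i)_* \nu \in \Prob(X,T)$. Since $f \circ \pi_1 = f \circ \pi_2$ holds pointwise on $X \times_f X$, we get $f_* \mu_1 = f_* \mu_2 =: \lambda$. Put $\mu = \tfrac{1}{2}(\mu_1 + \mu_2) \in \Prob(X,T)$. By hypothesis, $f$ induces a measure-theoretic isomorphism $(X,T,\mu) \cong (Y,S,\lambda)$, so there is a Borel map $g: Y \to X$ with $g \circ f (x) = x$ for $\mu$-a.e.\ $x \in X$, and hence also $\mu_1$- and $\mu_2$-a.e. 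Pulling back via the projections $\pi_1,\pi_2$, we deduce that $g(f(\pi_1(x_1,x_2))) = \pi_1(x_1,x_2)$ and $g(f(\pi_2(x_1,x_2))) = \pi_2(x_1,x_2)$ for $\nu$-a.e.\ $(x_1,x_2)$. Since $f(x_1) = f(x_2)$ on $X \times_f X$, applying $g$ gives $x_1 = x_2$ for $\nu$-a.e.\ $(x_1,x_2)$, so $\nu(\Delta_X) = 1$.

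The main technical point I would need to check carefully is the equivariance of the disintegration (the identity $(T_\gamma)_*\mu_y = \mu_{S_\gamma y}$ for a.e.\ $y$), which is standard but requires separability of $X$ and $Y$; compactness and metrizability ensure this. Granted that, the verification that $\nu$ is $(T \times T)$-invariant in the first direction is a direct change-of-variables argument, and the second direction is almost immediate once one observes that an essentially injective measure-preserving map can only support diagonal self-joinings.
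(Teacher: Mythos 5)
Your proposal is correct and follows essentially the same strategy as the paper: both directions hinge on the relatively independent self-joining $\nu = \int_Y \mu_y \otimes \mu_y \, d(f_*\mu)(y)$. The one small difference is in $(1)\Rightarrow(2)$, where you use a Borel inverse $g$ with $g\circ f=\mathrm{id}$ a.e.\ and compose directly, whereas the paper phrases the same point in terms of conditional measures $\mu(\cdot\mid f)$ not being point masses; your formulation is arguably cleaner, but the underlying content is identical.
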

\begin{proof}
    Suppose first that $f:X \to Y$ is not an almost  embedding. Then there exists  $\mu \in \Prob(X,T)$ such that $f:X \to Y$ is not a measure theoretic  isomorphism  from $(X,T,\mu)$ to $(Y,S,f_* \mu)$. Let $\lambda = \mu \times_f \mu$ be the relatively independent self-joining of $\mu$ given $f$, which is (essentially by definition) the unique Borel probability measure supported on $X \times_f X$ satisfying that $(f \times f)_* \lambda =  f_* \mu $ together with the conditional independence property that for $\lambda$-almost every $(x_1,x_2) \in X \times_f X$ we have
    \[
    \lambda\left( A \times B \mid f\times f\right)(x_1,x_2) = \mu(A \mid f)(x_1)\mu(B\mid f)(x_2).
    \]
    Then $\lambda \in \Prob(X\times_f X, T\times T)$.
    The assumption that $f:X \to Y$ is not a  measure theoretic  isomorphism  from $(X,T,\mu)$ to $(Y,S,f_* \mu)$ directly implies that $\lambda$ is not supported on $\Delta_X$.

    Conversely, suppose that $\lambda \in \Prob(X \times_f X,T \times T)$ is not supported  on $\Delta_X$. Let $\mu_1, \mu_2 \in \Prob(X \times_f X, T\times T)$ denote the push-forward of $\lambda$ via the $\pi_1:X \times_f X \to X$ and $\pi_2:X \times X \to X$ respectively where $\pi_i(x_1,x_2)=x_i$, and let
    $\mu = \frac{1}{2} \mu_1 + \frac{1}{2}\mu_2$. Then statement that $\lambda$ is not supported on $\Delta_X$ implies in particular that on a set of positive $\lambda$-measure the conditional measure $\lambda(\cdot \mid f\times f)$ is supported on a singleton $(x,x) \in \Delta_X$. This directly implies that on a set of positive $\mu$-measure $\mu(\cdot \mid f)$ is not supported on a singleton, so $f:X \to Y$ is not a measure theoretic  isomorphism  from $(X,T,\mu)$ to $(Y,S,f_* \mu)$.
\end{proof}

We note that a version of \Cref{prop:almost_emb_char} for the case where $(Y,S)$ is uniquely ergodic appears in \cite[Proposition 2.5]{MR3586277}. In the case where $(Y,S)$ is uniquely ergodic condition (2) in the statement of \Cref{prop:almost_emb_char} can be replaced by the simpler condition that $X \times_f X$ is uniquely ergodic.

There are  simple examples for continuous  almost embeddings which are not an embedding. For example, take $(X,T)$ to be the $\mathbb{Z}$-dynamical system given by  $X= \Z \cup\{\infty\}$ is the one-point compactification of $\Z$ and $T:X \to X$ is given by \[T(x)=\begin{cases}
    x+1 & x \in \mathbb{Z}\\
    \infty & x=\infty
\end{cases},\]
Then the unique map from $X$ to the trivial one-point space is an almost embedding. More generally, any proximal extension induces an almost embedding.

However, for the class of \emph{distal} dynamical systems any almost embedding is an embedding. 
A dynamical system $(X,T)$ is \emph{distal} if for every pair of distinct point $(x_1,x_2) \in (X\times X) \setminus \Delta_X$, the orbit closure of $(x_1,x_2)$ under $T\times T$ does not intersect $\Delta_X$.

\begin{proposition}\label{prop:almost_emb_distal}
    Let $(X,T)$ be a distal dynamical system. Then any almost  embedding is an embedding.
\end{proposition}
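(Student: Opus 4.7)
The plan is a proof by contradiction using the characterization in \Cref{prop:almost_emb_char}. Suppose $f\colon X \to Y$ is an almost embedding but not an embedding. Since $X$ is compact and $f$ is continuous and equivariant, being an embedding is equivalent to being injective, so we may pick $x_1 \neq x_2$ in $X$ with $f(x_1) = f(x_2)$; in particular $(x_1,x_2)$ lies in $X \times_f X \setminus \Delta_X$.

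Let $Z$ denote the $(T\times T)$-orbit closure of $(x_1,x_2)$ in $X \times X$. Equivariance of $f$ makes $X \times_f X$ a closed $(T\times T)$-invariant subset of $X\times X$, so $Z \subseteq X \times_f X$. The definition of distality asserts precisely that the $(T\times T)$-orbit closure of any non-diagonal pair is disjoint from $\Delta_X$, and hence $Z \cap \Delta_X = \emptyset$.

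The goal is now to exhibit a $(T\times T)$-invariant Borel probability measure $\lambda$ supported on $Z$; regarded as a measure on $X \times_f X$, such a $\lambda$ satisfies $\lambda(\Delta_X) = 0$, which directly contradicts \Cref{prop:almost_emb_char}. The subsystem $(Z, T\times T)$ is distal, being the restriction of the distal product system $(X \times X, T\times T)$ to a closed invariant subset. To produce $\lambda$ I will invoke the classical fact that every distal action of a countable group on a compact metric space admits an invariant Borel probability measure. One route is to pass to a minimal subsystem $M \subseteq Z$ by Zorn's lemma, observe that the enveloping semigroup of the minimal distal system $M$ is a compact Hausdorff group acting transitively on $M$ (Ellis), and push forward an appropriate Haar-type measure; an equivalent route is to apply the Ryll--Nardzewski fixed point theorem to the affine $\Gamma$-action on $\Prob(Z)$ after checking the required distality of the induced action.

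The key subtle point is this last step, the production of an invariant measure on $Z$: when $\Gamma$ is a non-amenable countable group, a general compact $\Gamma$-system need not carry any invariant probability measure, and the distality hypothesis is exactly what makes one available. The remainder of the argument is a direct unpacking of definitions followed by an appeal to \Cref{prop:almost_emb_char}.
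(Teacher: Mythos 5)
Your proof is correct and follows essentially the same route as the paper: reduce to showing $X \times_f X = \Delta_X$, observe that the orbit closure of a non-diagonal pair in the fiber product is a distal system, invoke the classical fact that distal systems carry invariant measures (the paper cites Furstenberg directly, where you sketch the Ellis/Ryll--Nardzewski alternatives), and derive a contradiction with \Cref{prop:almost_emb_char}. The only cosmetic difference is that you frame it as a contradiction while the paper argues directly that any $(x_1,x_2) \in X \times_f X$ must lie on the diagonal.
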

\begin{proof}
    Suppose that $(X,T)$ is a distal $\Gamma$-dynamical system for some group  $\Gamma$ and that $f:X \to Y$ is an almost embedding.  To show that $f:X \to Y$ is an embedding we need to show that $X \times_f X = \Delta_X$. Take any $(x_1,x_2) \in X \times_f X$. 
    Since the class of distal dynamical system is closed under products and taking subsystems, it follows that $(X\times_f X ,T \times T)$ is distal. Thus, the orbit closure of $(x_1,x_2)$ under $T\times T$ is also distal. By a celebrated result of Furstenberg, any distal system admits an invariant probability measure \cite[Theorem 12.3]{MR157368}. In particular, the orbit closure of $(x_1,x_2)$ supports a $T \times T$-invariant probability measure $\lambda$. The assumption that $f:X \to Y$ is an almost-embedding implies that $\lambda$ is supported on $\Delta_X$. But the assumption that $(X,T)$ is distal means that the orbit closure of $(x_1,x_2)$ intersects $\Delta_X$ if  and only if $(x_1,x_2) \in \Delta_X$. We conclude that $X \times_f X=\Delta_X$.
\end{proof}

\begin{theorem}\label{thm:dim_almost_emb}
    Let $\Gamma$ be a countable group, and let $(X,T)$ be a $\Gamma$-dynamical system and let $d \in \N$ be an number such that $\dim(X,T) < \frac{d}{2}$. 
    Then there is a dense $G_\delta$ set $G \subseteq C(X,[0,1]^d)$ such that for every $f \in G$ the function $f^\Gamma:X \to ([0,1]^d)^\Gamma$ is an almost embedding. 
 
\end{theorem}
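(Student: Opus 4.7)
The proof will reduce almost embedding to a fiber-product condition via \Cref{prop:almost_emb_char}. Writing $E_f := \{(x_1,x_2) \in X \times X : f(x_1)=f(x_2)\}$, one has $X \times_{f^\Gamma} X = \bigcap_{\gamma \in \Gamma} (T_\gamma \times T_\gamma)^{-1}(E_f)$, so $T \times T$-invariance of $\mu \in \Prob(X \times X, T\times T)$ makes the support condition $\mu(X \times_{f^\Gamma} X) = 1$ equivalent to $\mu(E_f)=1$. Since both $\Delta_X$ and $X \times_{f^\Gamma} X$ are $T\times T$-invariant, the ergodic decomposition further reduces the claim to the following: for $f$ in a suitable dense $G_\delta$, every ergodic $\mu \in \Prob(X \times X, T\times T)$ with $\mu(E_f)=1$ satisfies $\mu(\Delta_X)=1$.

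To construct the set $G$, fix $\eta := (d/2 - \dim(X,T))/2 > 0$. For each $n \in \N$, the definition of $\dim(X,T)$ supplies an open cover $\mathcal{V}_n$ of $X$ with $\mesh(\mathcal{V}_n) < 1/n$ and $\ord(\mathcal{V}_n, T) \le d/2 - \eta$. Applying \Cref{lem:ostrand_kolmogorov_covers} with $k=d$ to $\mathcal{V}_n$ yields pairwise disjoint closed families $\mathcal{C}_{n,1},\ldots,\mathcal{C}_{n,d}$, each refining $\mathcal{V}_n$ (hence of mesh $<1/n$), satisfying the pointwise bound
\[
\left|\{j : x \in \bigcup \mathcal{C}_{n,j}\}\right| \ge d - \ord(\mathcal{V}_n, x) \quad \text{for every } x \in X.
\]
\Cref{lem:separate} then produces a dense $G_\delta$ set $G \subseteq C(X,[0,1]^d)$ consisting of those $f=(f_1,\ldots,f_d)$ such that for infinitely many $n$, $f_j$ separates $\mathcal{C}_{n,j}$ simultaneously for every $j$; this is the $G$ appearing in the statement.

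The combinatorial core of the argument is this: fix $f \in G$, an ergodic $\mu$ supported on $X \times_{f^\Gamma} X$, and some $n$ at which the separation property holds. For $(x_1,x_2) \in E_f$ with $\rho(x_1,x_2) \ge 1/n$, the index sets $J_i := \{j : x_i \in \bigcup \mathcal{C}_{n,j}\}$ must be disjoint: a common $j$ would place $x_1,x_2$ in the same element $C \in \mathcal{C}_{n,j}$ (because $f_j(x_1)=f_j(x_2)$ and $f_j$ separates $\mathcal{C}_{n,j}$), forcing $\rho(x_1,x_2) \le \mesh(\mathcal{C}_{n,j}) < 1/n$, a contradiction. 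Combining $|J_i| \ge d - \ord(\mathcal{V}_n, x_i)$ with $|J_1|+|J_2| \le d$ yields $\ord(\mathcal{V}_n,x_1)+\ord(\mathcal{V}_n,x_2) \ge d$ on the event $\{\rho(x_1,x_2)\ge 1/n\} \cap E_f$.

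The argument then closes via Markov's inequality. Because the marginals of $\mu$ lie in $\Prob(X,T)$, one has $\int \ord(\mathcal{V}_n, x_i)\,d\mu \le \ord(\mathcal{V}_n, T) \le d/2 - \eta$ for $i=1,2$, and so
\[
\mu\bigl(\{\rho(x_1,x_2) \ge 1/n\}\bigr) \le \frac{1}{d}\int \bigl(\ord(\mathcal{V}_n,x_1)+\ord(\mathcal{V}_n,x_2)\bigr) d\mu \le 1 - \frac{2\eta}{d}.
\]
Hence $\mu(\{\rho(x_1,x_2) < 1/n\}) \ge 2\eta/d$ for infinitely many $n$; letting $n$ tend to infinity along such indices (the sets decrease to $\Delta_X$) gives $\mu(\Delta_X) \ge 2\eta/d > 0$, which by ergodicity of $\mu$ forces $\mu(\Delta_X) = 1$. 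I expect the main subtlety to be the dual role of the Ostrand--Kolmogorov families: they must simultaneously witness both marginals of $\mu$, and the factor of $2$ in the hypothesis $\dim(X,T) < d/2$ is precisely what the Markov step on the sum of the two marginal $\ord$-integrals absorbs.
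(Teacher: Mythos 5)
Your proof is correct and follows essentially the same route as the paper: apply \Cref{prop:almost_emb_char} to reduce to a support condition on the fiber product, construct the dense $G_\delta$ set $G$ by combining \Cref{lem:ostrand_kolmogorov_covers} with the Baire-category lemma \Cref{lem:separate}, and then close via a Markov/Chebyshev estimate on the $\ord$-averages of the two marginals to show every ergodic self-joining charges $\Delta_X$. Your formulation via disjointness of the index sets $J_1,J_2$ and the reduction to $\mu(E_f)=1$ via invariance are slightly cleaner presentational choices, but the underlying argument — and in particular the role of the hypothesis $\dim(X,T)<d/2$ — matches the paper's exactly.
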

\begin{proof}
Because $\dim(X,T) < \frac{d}{2}$, we can choose $0 < \eta < \frac{d}{2} -\dim(X,T)$. For any $n \in \mathbb{N}$ there exists a finite open cover $\mathcal{U}_n \in \Cov(X)$ such that $\mesh(\mathcal{U}_n) < \frac{1}{n}$ and $\ord(\mathcal{U}_n,T) < \frac{d}{2}-\eta$. Using \Cref{lem:ostrand_kolmogorov_covers}, for any $n \in \mathbb{N}$ there exists families $\mathcal{C}_{1,n},\ldots,\mathcal{C}_{d,n}$ such that each $\mathcal{C}_{j,n}$ consists of pairwise disjoint closed subsets of $X$, $\mesh(\mathcal{C}_{j,n}) < \frac{1}{n}$ and 
\begin{equation}\label{eq:sum_C_j_int_mu}
\inf_{\mu \in \Prob(X,T)} \int \sum_{j=1}^d \overline{\mathbf{1}}_{\mathcal{C}_{j,n}}(x)d\mu(x) < \frac{d}{2}- \eta.
\end{equation}
By \Cref{lem:separate} the set $G \subseteq C(X,[0,1]^d)$ of functions $f=(f_1,\ldots,f_d):X \to [0,1]^d$ such that for infinitely many $n$'s   $f_j$ separates $\mathcal{C}_{j,n}$ for all $1\le j \le d$ is a dense $G_\delta$ subsets in $C(X,[0,1]^d)$. Choose $f \in G$.
We will prove that $f^\Gamma$ is an  almost embedding.
Choose any ergodic $\lambda \in \Prob(X\times_f X,T \times T)$. Our goal is to show that $\lambda(\Delta_X)=1$.  Since $\lambda$ is ergodic, and $\Delta_X$ is $(T\times T)$-invariant, it suffices to show that $\lambda(\Delta_X) >0$.  Note that
\[\Delta_X = \bigcap_{n=1}^\infty\left\{ (x_1,x_2) \in X \times_f X~:~ \rho(x_1,x_2) < \frac{1}{n} \right\}\]
So in order to show that $\lambda(\Delta_X)>0$, it suffices to find a positive number $\alpha >0$ so that the following inequality holds for infinitely many $n$'s:
\begin{equation}\label{eq:lambda_dist_small_n}
\lambda\left(\left\{(x_1,x_2)\in X \times_f X~:~ \rho(x_1,x_2) < \frac{1}{n} \right\}\right) > \alpha .   
\end{equation}

%Let $K_{n,j}= \bigcup_{C \in \mathcal{C}_{j,n}}C$. By \Cref{eq:sum_C_j_int_mu} we see

Since $\overline{\mathbf{1}}_{\mathcal{C}_{j,n}}=1- \mathbf{1}_{\mathcal{C}_{j,n}}$ it follows from \eqref{eq:sum_C_j_int_mu} that

\[
\int \sum_{j=1}^d\left( \mathbf{1}_{\mathcal{C}_{j,n}}(x_1) + \mathbf{1}_{\mathcal{C}_{j,n}}(x_2)\right) d\lambda(x_1,x_2) > d + 2\eta. 
\]
It follows that for every $n \in \N$
\[
\lambda \left( \bigcup_{j=1}^n\bigcup_{C_1,C_2 \in \mathcal{C}_{n,j}}\left\{ (x_1,x_2) \in X \times_f X~:~ (x_1,x_2) \in  C_1 \times C_2 \right\}\right) > 2\eta. 
\]
Now if $f=(f_1,\ldots,f_j)$ and $f_j$ separates $\mathcal{C}_{j,n}$ and $(x_1,x_2) \in X \times_f X$ satisfies that $(x_1,x_2) \in C_1 \times C_2$
for some $C_1,C_2 \in \mathcal{C}_{j,n}$
then there exists some $C_1 = C_2$ and because $\mesh(\mathcal{C}_{j,n}) < \frac{1}{n}$ it follows that $\rho(x_1,x_2) < \frac{1}{n}$.
Thus, we see that for any $f \in G$ the inequality \eqref{eq:lambda_dist_small_n} holds with $\alpha =2\eta$, hence any $f \in G$ is an almost embedding.

\end{proof}

Combining \Cref{prop:almost_emb_distal} and \Cref{thm:dim_almost_emb} 
we have:

\begin{corollary}\label{cor:distal_emb}
Suppose that $(X,T)$ is a distal dynamical system with $\dim(X,T) < \frac{d}{2}$. Then $(X,T)$ embeds in $(((0,1)^d)^\Gamma,\shift)$.    
\end{corollary}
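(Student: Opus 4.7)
The plan is to deduce this directly from the two preceding results: \Cref{thm:dim_almost_emb}, which produces a dense $G_\delta$ of almost embeddings into the cubical shift as soon as $\dim(X,T)<d/2$, and \Cref{prop:almost_emb_distal}, which guarantees that for distal systems every almost embedding is automatically an embedding. Concretely, I would first apply \Cref{thm:dim_almost_emb} to obtain a dense $G_\delta$ set $G \subseteq C(X,[0,1]^d)$ such that $f^\Gamma : X \to ([0,1]^d)^\Gamma$ is an almost embedding for every $f \in G$.

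Next I would need to move from the closed cube to the open cube $(0,1)^d$ that appears in the statement. The set $C(X,(0,1)^d)$ is a dense open subset of $C(X,[0,1]^d)$: openness follows because $f(X)$ is compact and its distance to $\partial [0,1]^d$ depends continuously on $f$ in the sup norm, while density follows by composing any $f\in C(X,[0,1]^d)$ with a small affine contraction of the form $v \mapsto (1-\epsilon)v + (\epsilon/2,\ldots,\epsilon/2)$ towards the center of the cube. Since a dense open set meets every dense $G_\delta$, I can pick $f \in G$ whose image already lies in $(0,1)^d$.

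For such $f$, the map $f^\Gamma:X \to ((0,1)^d)^\Gamma$ is continuous, $\Gamma$-equivariant (by construction of the shift action), and an almost embedding. Invoking the distal hypothesis, \Cref{prop:almost_emb_distal} upgrades $f^\Gamma$ to an injective equivariant continuous map. Since $X$ is compact and $((0,1)^d)^\Gamma$ is Hausdorff, any such injection is automatically a topological embedding, giving the desired equivariant embedding of $(X,T)$ into $(((0,1)^d)^\Gamma,\shift)$.

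I do not expect any genuine obstacle here: the two substantive theorems are already in place, and the remaining work consists only of the elementary passage from the closed to the open cube together with the standard fact that continuous injections from compacta into Hausdorff spaces are embeddings. The one bookkeeping point that deserves care is ensuring that the convention used to define $f^\Gamma$ (for instance $f^\Gamma(x)_\gamma = f(T_{\gamma^{-1}}x)$) is consistent with the shift action $(\shift_\gamma y)_{\gamma'}=y_{\gamma^{-1}\gamma'}$, but this is a matter of definition rather than mathematical content.
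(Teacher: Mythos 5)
Your proof is correct and follows the same route as the paper, whose argument for this corollary is nothing more than the one-line remark ``Combining \Cref{prop:almost_emb_distal} and \Cref{thm:dim_almost_emb} we have:''. The extra care you take in passing from $[0,1]^d$ to $(0,1)^d$ --- observing that $C(X,(0,1)^d)$ is dense open in $C(X,[0,1]^d)$ and therefore meets the dense $G_\delta$ set $G$ from \Cref{thm:dim_almost_emb} --- is exactly the detail the paper leaves implicit, and the rest (injectivity from \Cref{prop:almost_emb_distal} plus compactness of $X$ giving a topological embedding) is routine.
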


As mentioned in the introduction, Dranishnikov  and Levine  proved the existence of a free and equicontinuous $\mathbb{Z}$-dynamical system that cannot embed into $(((0,1)^d)^\mathbb{Z},\shift)$ for any $d \in \mathbb{N}$ in \cite{dranishnikov2025freezactionisometriescompact}. It follows from \Cref{cor:distal_emb} that any such dynamical system $(X,T)$ must have $\dim(X,T)=\infty$. We conclude that $\dim(X,T)$ does not coincide with the mean dimension of $(X,T)$ even for free actions of $\mathbb{Z}$:

\begin{corollary}\label{cor:dim_ne_mdim}
    There exists a free $\mathbb{Z}$-dynamical system $(X,T)$ with $\mdim(X,T)=0$ and $\dim(X,T)=+\infty$.
\end{corollary}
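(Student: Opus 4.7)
The plan is to use the Dranishnikov--Levin example from \cite{dranishnikov2025freezactionisometriescompact} as the input and then read off the conclusion $\dim(X,T) = +\infty$ from \Cref{cor:distal_emb} via contrapositive.

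First, let $(X,T)$ be the free $\mathbb{Z}$-action by isometries on a compact metric space produced in \cite{dranishnikov2025freezactionisometriescompact}, whose minimal sampling rate is infinite, i.e.\ $(X,T)$ admits no continuous equivariant embedding into $(([0,1]^d)^{\mathbb{Z}},\shift)$ for any $d \in \mathbb{N}$. Because $T$ is an isometry of a compact metric space, the pair $(X,T)$ is distal: for distinct $x_1,x_2 \in X$, the distance $\rho(T^n x_1, T^n x_2)$ is constantly $\rho(x_1,x_2)>0$, so the $T\times T$-orbit of $(x_1,x_2)$ stays a fixed positive distance from $\Delta_X$.

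Next, I would verify that $\mdim(X,T)=0$. Equicontinuous (in particular isometric) actions of amenable groups on compact metric spaces have vanishing mean dimension: given an open cover $\mathcal{U}$ with Lebesgue number $\delta$, uniform equicontinuity yields a single finite open cover $\mathcal{V}$ of $X$ refining $T_\gamma(\mathcal{U})$ for \emph{every} $\gamma \in \mathbb{Z}$ simultaneously, so $\dim(\bigvee_{\gamma \in F_n}T_\gamma\mathcal{U}) \le \dim(\mathcal{V})$ uniformly in $n$, forcing the Lindenstrauss--Weiss limit \eqref{eq:mean_dim_def} to equal zero. This is the well-known fact that equicontinuous systems have zero mean dimension.

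Finally, suppose for contradiction that $\dim(X,T)<+\infty$. Then one can choose an integer $d$ with $\dim(X,T)<d/2$, and since $(X,T)$ is distal, \Cref{cor:distal_emb} yields a continuous equivariant embedding of $(X,T)$ into $(([0,1]^d)^{\mathbb{Z}},\shift)$. This contradicts the defining property of the Dranishnikov--Levin example, so $\dim(X,T)=+\infty$, while $\mdim(X,T)=0$ as verified above. No step is really the main obstacle here: all the work is packaged into \Cref{cor:distal_emb} and into the construction of \cite{dranishnikov2025freezactionisometriescompact}; the corollary just assembles them.
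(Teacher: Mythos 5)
Your proposal is correct and takes essentially the same route as the paper: take the Dranishnikov--Levin free isometric $\mathbb{Z}$-system with infinite minimal sampling rate, observe that it is distal and has zero mean dimension, and apply the contrapositive of \Cref{cor:distal_emb} to conclude $\dim(X,T)=+\infty$. The only difference is that you spell out the two background facts (isometric $\Rightarrow$ distal, equicontinuous $\Rightarrow$ $\mdim=0$) that the paper leaves implicit, which is reasonable since the paper's argument is packed into a single sentence immediately preceding the corollary statement.
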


\bibliographystyle{amsplain}
%\nocite{*}
\bibliography{lib}

\providecommand{\bysame}{\leavevmode\hbox to3em{\hrulefill}\thinspace}
\providecommand{\MR}{\relax\ifhmode\unskip\space\fi MR }
% \MRhref is called by the amsart/book/proc definition of \MR.
\providecommand{\MRhref}[2]{%
  \href{http://www.ams.org/mathscinet-getitem?mr=#1}{#2}
}
\providecommand{\href}[2]{#2}
\begin{thebibliography}{10}

\bibitem{MR111808}
V.~I. Arnol\cprime~d, \emph{On functions of three variables}, Dokl. Akad. Nauk SSSR \textbf{114} (1957), 679--681. \MR{111808}

\bibitem{MR3546663}
Tomasz Downarowicz, Bartosz Frej, and Pierre-Paul Romagnoli, \emph{Shearer's inequality and infimum rule for {S}hannon entropy and topological entropy}, Dynamics and numbers, Contemp. Math., vol. 669, Amer. Math. Soc., Providence, RI, 2016, pp.~63--75. \MR{3546663}

\bibitem{MR3586277}
Tomasz Downarowicz and Eli Glasner, \emph{Isomorphic extensions and applications}, Topol. Methods Nonlinear Anal. \textbf{48} (2016), no.~1, 321--338. \MR{3586277}

\bibitem{dranishnikov2025freezactionisometriescompact}
Alexander Dranishnikov and Michael Levin, \emph{A free z-action by isometries on a compact metric space which is not embeddable into a cubical shift}, ArXiv preprint https://arxiv.org/abs/2508.14628 (2025).

\bibitem{elliott2025smallboundarypropertymathcal}
George~A. Elliott and Zhuang Niu, \emph{On the small boundary property, $\mathcal{Z}$-absorption, and bauer simplexes}, Arxiv preprint, https://arxiv.org/abs/2406.09748 (2025).

\bibitem{MR157368}
H.~Furstenberg, \emph{The structure of distal flows}, Amer. J. Math. \textbf{85} (1963), 477--515. \MR{157368}

\bibitem{MR4000508}
Oliver Jenkinson, \emph{Ergodic optimization in dynamical systems}, Ergodic Theory Dynam. Systems \textbf{39} (2019), no.~10, 2593--2618. \MR{4000508}

\bibitem{MR4066584}
David Kerr and G\'abor Szab\'o, \emph{Almost finiteness and the small boundary property}, Comm. Math. Phys. \textbf{374} (2020), no.~1, 1--31. \MR{4066584}

\bibitem{MR111809}
A.~N. Kolmogorov, \emph{On the representation of continuous functions of many variables by superposition of continuous functions of one variable and addition}, Dokl. Akad. Nauk SSSR \textbf{114} (1957), 953--956. \MR{111809}

\bibitem{levin2023finitetoone}
Michael Levin, \emph{Finite-to-one equivariant maps and mean dimension}, ArXiv preprint https://arxiv.org/abs/2312.04689 (2023).

\bibitem{MR3077882}
Hanfeng Li, \emph{Sofic mean dimension}, Adv. Math. \textbf{244} (2013), 570--604. \MR{3077882}

\bibitem{MR1793417}
Elon Lindenstrauss, \emph{Mean dimension, small entropy factors and an embedding theorem}, Inst. Hautes \'Etudes Sci. Publ. Math. (1999), no.~89, 227--262. \MR{1793417}

\bibitem{MR3219549}
Elon Lindenstrauss and Masaki Tsukamoto, \emph{Mean dimension and an embedding problem: an example}, Israel J. Math. \textbf{199} (2014), no.~2, 573--584. \MR{3219549}

\bibitem{MR1749670}
Elon Lindenstrauss and Benjamin Weiss, \emph{Mean topological dimension}, Israel J. Math. \textbf{115} (2000), 1--24. \MR{1749670}

\bibitem{MR464128}
James~R. Munkres, \emph{Topology: a first course}, Prentice-Hall, Inc., Englewood Cliffs, NJ, 1975. \MR{464128}

\bibitem{naryshkin2024urpcomparisonmeandimension}
Petr Naryshkin, \emph{Urp, comparison, mean dimension, and sharp shift embeddability}, Arxiv preprint, https://arxiv.org/abs/2410.01757 (2024).

\bibitem{MR4468009}
Zhuang Niu, \emph{Comparison radius and mean topological dimension: {R}okhlin property, comparison of open sets, and subhomogeneous {$\rm C^*$}-algebras}, J. Anal. Math. \textbf{146} (2022), no.~2, 595--672. \MR{4468009}

\bibitem{MR4802735}
\bysame, \emph{Radius of comparison and mean topological dimension: {$\Bbb Z^d$}-actions}, Canad. J. Math. \textbf{76} (2024), no.~4, 1240--1266. \MR{4802735}

\bibitem{MR177391}
Phillip~A. Ostrand, \emph{Dimension of metric spaces and {H}ilbert's problem {$13$}}, Bull. Amer. Math. Soc. \textbf{71} (1965), 619--622. \MR{177391}

\bibitem{MR1125888}
M.~Shub and B.~Weiss, \emph{Can one always lower topological entropy?}, Ergodic Theory Dynam. Systems \textbf{11} (1991), no.~3, 535--546. \MR{1125888}

\end{thebibliography}
\end{document}